\def\uwave{\bgroup \markoverwith{\lower3.5\p@\hbox{\sixly \textcolor{red}{\char58}}}\ULon}
\font\sixly=lasy6 
\newenvironment{red}{\relax\color{red}}{\relax}
\newenvironment{blue}{\relax\color{blue}}{\hspace*{.5ex}\relax}
\newcommand{\ber}{\begin{red}}
	\newcommand{\er}{\end{red}}
\newcommand{\beb}{\begin{blue}}
	\newcommand{\eb}{\end{blue}}
\newcounter{sarrow}
\newcommand\xrsquigarrow[1]{%
	\stepcounter{sarrow}%
	\mathrel{\begin{tikzpicture}[baseline= {( $ (current bounding box.south) + (0,-0.5ex) $ )}]
			\node[inner sep=.5ex] (\thesarrow) {$\scriptstyle #1$};
			\path[draw,<-,decorate,
			decoration={zigzag,amplitude=0.7pt,segment length=1.2mm,pre=lineto,pre length=4pt}] 
			(\thesarrow.south east) -- (\thesarrow.south west);
	\end{tikzpicture}}%
}
\theoremstyle{plain}
\newtheorem{lemma}{Lemma}
\newtheorem{prop}[lemma]{Proposition}
\newtheorem{coro}[lemma]{Corollary}
\newtheorem{theo}[lemma]{Theorem}
\newtheorem{rema}[lemma]{Remark}
\newtheorem{lemm}[lemma]{Lemma}
\newtheorem{mainthm}{Theorem}
\theoremstyle{definition}
\newtheorem{thm-Intro}{Theorem} 
\newtheorem{cor-Intro}{Corollary} 
\numberwithin{equation}{section}
\newcommand{\Ric}{\textup{Ric}}
\newcommand{\vol}{\textup{Vol}}
\newcommand{\p}{\partial}
\newcommand{\op}{\overline \partial}
\newcommand{\rsa}{\ \xrsquigarrow{(0,y,z)} \ }
\begin{document}
	\title[Equivalence of Invariant metrics via Bergman kernel]{Equivalence of Invariant metrics via Bergman kernel on complete noncompact  K\"ahler manifolds}
	
	\author{Gunhee Cho}
	\address{Department of Mathematics\\
		University of California, Santa Barbara\\
		552 University Rd, Isla Vista, CA 93117.}
	\email{gunhee.cho@math.ucsb.edu}

	\author{Kyu-Hwan Lee}
	\address{Department of Mathematics\\
		University of Connecticut\\
		Storrs, CT 06269, U.S.A.}
	\email{khlee@math.uconn.edu}

	\begin{abstract} 
		We study equivalence of invariant metrics on noncompact K\"ahler manifolds with a complete Bergman metric of bounded curvature. Especially only the boundedness of the ratio between Bergman kernel and the $n$-times wedge product of Bergman metric in any fundamental domain of such a K\"ahler manifold is required to obtain the equivalence of the Bergman metric and the complete K\"ahler--Einstein metric. To demonstrate the effectiveness of this method, we consider a two-parameter family of $3$-dimensional bounded pseudoconvex domains \[ E_{p,\lambda}=\{(x,y,z)\in \mathbb{C}^3 ; (|x|^{2p}+|y|^2)^{1/{\lambda}}+|z|^2<1  \},\qquad p,\lambda>0.\] 
		For this family, boundary limits of the holomorphic sectional curvature of the Bergman metric are not well-defined, and hence previously known methods for comparison of invariant metrics do not work. Lastly, we provide an estimate of lower bound of the integrated Carath\'eodory--Reiffen metric on complete noncompact simply-connected K\"ahler manifolds with negative sectional curvature. 
	\end{abstract}
	
	\maketitle
	
	
	\section{Introduction}
	
	As the Bergman metric, the complete K\"ahler--Einstein metric of negative scalar curvature, the Kobayashi--Royden metric, and the Carath\'eodory--Reiffen metric are generalizations of the Poincar\'e--Bergman metric on the complex hyperbolic space, equivalence of these four invariant metrics on negatively curved complex manifolds has been studied in complex geometry.  In addition, since these four metrics have the property that any automorphism becomes an isometry \cite{HW93,WuYau20}, it makes sense to study them from the viewpoint of differential geometry. Hermitian metrics and Finsler metrics with this property are called {\em invariant metrics}. Some well-known classes having equivalence of these metrics are complex manifolds with uniform squeezing property, smoothly bounded strictly pseudoconvex domains in $\mathbb{C}^n$, and weakly pseudoconvex domains of finite type in $\mathbb{C}^2$ \cite{SKY09, CDW89}. In complex dimension $3$, the equivalence of these metrics breaks down for some weakly pseudoconvex domains with analytic boundary \cite{DKFJEHG84}. 
	
	In this context, D. Wu and S.\,T. Yau proved the following remarkable theorems based on the quasi-bounded geometry and Shi's estimate \cite{ShiWan-Xiong97} with K\"ahler--Ricci flow.  
	
	\begin{theo}[\cite{WuDaminYauShingTung20}, Corollary 7]\label{thm:Wu-Yau1} Let $(M,\omega)$ be a complete simply-connected noncompact K\"ahler manifold whose Riemannian sectional curvature is negatively pinched.  Then the base K\"ahler metric is uniformly equivalent to the Kobayashi--Royden metric, the Bergman metric and the complete K\"ahler--Einstein metric of negative scalar curvature.
	\end{theo}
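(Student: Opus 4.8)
The plan is to reduce the assertion to a collection of two-sided comparisons with the base metric $\omega$, exploiting that negative pinching forces strong regularity of the geometry. \emph{Controlled geometry.} Since $-B \le \sec \le -A < 0$, the full curvature tensor is bounded, and by the Cartan--Hadamard theorem a complete simply connected manifold of negative sectional curvature has infinite injectivity radius; hence $(M,\omega)$ has bounded geometry of order $0$. Evolving $\omega$ briefly by the K\"ahler--Ricci flow and invoking Shi's local derivative estimates \cite{ShiWan-Xiong97} upgrades this to quasi-bounded geometry of infinite order: there exist $r_0 > 0$ and holomorphic charts $\Phi_p\colon B_\omega(p,r_0) \to \mathbb{C}^n$, $p \in M$, in which $\omega$ is uniformly comparable to the Euclidean metric with uniform bounds on all derivatives of the coefficients. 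Every later estimate is made uniform in $p$ by working in these charts.

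\emph{K\"ahler--Einstein metric.} Since $\Ric(\omega)$ is bounded and $M$ has bounded geometry, the Cheng--Yau/Mok--Yau existence theory applies: solving the Monge--Amp\`ere equation $(\omega + \fr\,\p\op u)^n = e^{u+F}\omega^n$, where $\fr\,\p\op F = \Ric(\omega)+\omega$, produces a complete metric $\omega_{\ke} = \omega + \fr\,\p\op u$ with $\Ric(\omega_{\ke}) = -\omega_{\ke}$. The zeroth-order estimate bounds $\|u\|_\infty$, and the second-order (Aubin--Yau, Schwarz-type) estimate together with the two-sided Ricci bound of $\omega$ controls $\fr\,\p\op u$ above and below, giving $C^{-1}\omega \le \omega_{\ke} \le C\omega$.

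\emph{Kobayashi--Royden and Bergman metrics.} One Kobayashi comparison is Yau's Schwarz lemma: any holomorphic $f\colon \mathbb{D}\to M$ from the Poincar\'e disc satisfies $f^{*}\omega \le A^{-1}\omega_{\mathbb{D}}$ because the holomorphic sectional curvature of $\omega$ is $\le -A$, so $\|v\|_\omega \le A^{-1/2}K_M(v)$ for all $v$, i.e. $\omega \lesssim K_M$. For the reverse, and for the lower bound on the Bergman metric, one must produce through each $p$ in each direction $v$ a holomorphic object of definite size: using the uniform chart at $p$ and H\"ormander's $L^2$-estimate with a weight singular at $p$ (strict plurisubharmonicity being available from bounded geometry/Steinness of $M$) one builds a bounded holomorphic function, resp.\ an $L^2$ holomorphic $n$-form, peaked at $p$ with controlled norm; this yields $K_M \lesssim \omega$ and $\omega_B \gtrsim \omega$. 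The matching upper bound $\omega_B \lesssim \omega$ follows from localization of the on-diagonal Bergman kernel and its first jet (monotonicity of $K_B$ under shrinking the domain, then comparison with the uniform chart). Chaining the comparisons through $\omega$ gives uniform equivalence of all four metrics.

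The \textbf{main obstacle} is the two ``lower'' directions, $K_M \lesssim \omega$ and $\omega_B \gtrsim \omega$: they require genuinely \emph{constructing} holomorphic discs and $L^2$-forms of controlled size through an arbitrary point of $M$, precisely the difficulty that a bounded realization of $M$ --- the content of the open conjecture --- would trivialize. The role of the controlled-geometry step is exactly to make these $\op$-constructions uniform over $M$, so that the resulting comparison constants depend only on $A$, $B$, and $n$.
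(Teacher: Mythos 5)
First, a point of bookkeeping: the paper does not prove Theorem~\ref{thm:Wu-Yau1} at all --- it is quoted from \cite{WuDaminYauShingTung20} (Corollary 7), and only some of its ingredients (Theorem~\ref{thm:equiv-quasi-bounded-geometry}, Lemmas~\ref{Kobayashi-upper-bound} and~\ref{quasi-bounded-Kobayashi}, Shi's estimates \cite{ShiWan-Xiong97}) are reproduced in Sections 3 and 5. Measured against the actual Wu--Yau argument, your sketch follows essentially the same route: quasi-bounded geometry via a short-time K\"ahler--Ricci flow and Shi's derivative estimates, the Monge--Amp\`ere/Cheng--Yau machinery for the K\"ahler--Einstein comparison, the Schwarz lemma for the lower bound of the Kobayashi--Royden metric, and H\"ormander's $L^2$ method for the Bergman metric.

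One correction of emphasis. The upper bound $\chi_M(x;v)\le C\,|v|_\omega$ does \emph{not} require any $\overline{\partial}$-construction: the quasi-bounded geometry chart $\psi\colon U\to M$ with $B_{\mathbb{C}^n}(r_1)\subset U$, $\psi(0)=p$ and $\psi^*\omega$ uniformly comparable to $\omega_{\mathbb{C}^n}$ is already a holomorphic disc of definite size through $p$ in every direction; this is precisely Lemma~\ref{quasi-bounded-Kobayashi}. The only genuinely constructive step is the Bergman lower bound $\omega_B\gtrsim\omega$, and there simple connectivity enters essentially: by Cartan--Hadamard and the Hessian comparison theorem the squared distance function is a smooth strictly plurisubharmonic exhaustion with two-sided Hessian bounds depending only on $A$, $B$, $n$, and this is the global weight needed to run H\"ormander's estimate uniformly in $p$. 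Your appeal to ``strict plurisubharmonicity being available from bounded geometry/Steinness of $M$'' is the one place that needs to be made precise --- bounded geometry alone gives only local weights, and it is exactly the pinching plus simple connectivity that supplies the global one.
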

	
	\begin{theo}[\cite{WuDaminYauShingTung20}, Theorems 2, 3]\label{thm:Wu-Yau2} Let $(M,\omega)$ be a complete  K\"ahler manifold whose holomorphic sectional curvature is negatively pinched.  Then the base K\"ahler metric is uniformly equivalent to the Kobayashi--Royden metric and the complete K\"ahler--Einstein metric of negative scalar curvature.
	\end{theo}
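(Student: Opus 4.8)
The plan is to reduce the theorem to the \emph{quasi-bounded geometry} of $(M,\omega)$ and then run Schwarz-lemma comparisons together with the Cheng--Yau theory of the complex Monge--Amp\`ere equation. \emph{Step 1 (quasi-bounded geometry).} Write the pinching as $-\kappa_2\le H_\omega\le-\kappa_1<0$. First I would prove that this forces quasi-bounded geometry of every order: there exist $r_0>0$ and constants $C_k$ so that every $p\in M$ carries a biholomorphism $\psi_p$ from the Euclidean ball $B(0,r_0)\subset\mathbb{C}^n$ onto a neighbourhood of $p$ with $\psi_p(0)=p$, with $\psi_p^*\omega$ uniformly comparable to the Euclidean metric, and with $\|\partial^\alpha(\psi_p^*g_{i\bar j})\|_{\infty}\le C_k$ for $|\alpha|\le k$. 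The first input is the classical polarization (Berger) estimate: on a K\"ahler manifold the full curvature tensor is pointwise dominated by $\sup_{|X|=1}|H_\omega(X)|\le\kappa_2$, hence $|\mathrm{Rm}_\omega|\le c_n\kappa_2$. Bounded curvature then permits running the K\"ahler--Ricci flow for a definite time and invoking Shi's local derivative estimates to bound $|\nabla^k\mathrm{Rm}|$; together with a lower bound for the injectivity radius --- where the \emph{negativity} of the upper pinching and completeness enter, via a volume-noncollapsing argument --- one builds the charts $\psi_p$ by holomorphically correcting geodesic normal coordinates. \textbf{I expect this step to be the main obstacle}: controlling the covariant derivatives of curvature and transferring those bounds to the original metric, and especially producing the injectivity-radius lower bound, are the genuinely analytic points, while the remaining steps are comparatively soft.

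\emph{Step 2 (Kobayashi--Royden metric).} Let $F_{\mathrm{KR}}$ denote the Kobayashi--Royden infinitesimal metric. The bound $\omega\lesssim F_{\mathrm{KR}}$ is the Ahlfors--Schwarz--Royden lemma: since $H_\omega\le-\kappa_1$, every holomorphic map from the Poincar\'e disk into $M$ is metric-decreasing up to the factor $\kappa_1^{-1}$, so $F_{\mathrm{KR}}$ dominates $\omega$. For the reverse bound $F_{\mathrm{KR}}\lesssim\omega$ I would use the charts of Step~1: precomposing $\psi_p$ with an affine embedding of a disk of definite radius in any prescribed tangent direction produces holomorphic disks through $p$ whose derivative has $\omega$-length of a definite size in that direction, which bounds $F_{\mathrm{KR}}$ from above by a constant multiple of $\|\cdot\|_\omega$.

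\emph{Step 3 (complete K\"ahler--Einstein metric).} With quasi-bounded geometry available, the Cheng--Yau/Mok--Yau machinery produces a unique complete K\"ahler--Einstein metric $\omega_{\ke}$ with $\Ric(\omega_{\ke})=-\omega_{\ke}$, itself of bounded geometry, the requisite positivity being supplied by the negative holomorphic sectional curvature (the noncompact analogue of the ampleness of $K_M$ for compact K\"ahler $M$ with $H<0$). Writing $\omega_{\ke}=\omega+\sqrt{-1}\,\partial\overline{\partial}u$, the zeroth-order a priori estimate gives $\sup_M|u|<\infty$ and Yau's second-order estimate --- which needs only a lower bound on the bisectional curvature of $\omega$, already in hand --- gives $\tr_\omega\omega_{\ke}\le C$, i.e.\ $\omega_{\ke}\le C\omega$; the Monge--Amp\`ere equation $\omega_{\ke}^{\,n}=e^{u+F}\omega^n$ with bounded $u$ and $F$ then yields $\omega_{\ke}\ge C^{-1}\omega$. (Alternatively $\omega\lesssim\omega_{\ke}$ follows from Yau's Schwarz lemma for the identity map $(M,\omega_{\ke})\to(M,\omega)$, using $\Ric(\omega_{\ke})=-\omega_{\ke}$ and $H_\omega\le-\kappa_1$.) Combining Steps~2 and~3 gives the uniform equivalence of $\omega$, $F_{\mathrm{KR}}$ and $\omega_{\ke}$ claimed in the theorem.
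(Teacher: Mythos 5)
The overall architecture you propose (quasi-bounded geometry, then two Schwarz-lemma comparisons for the Kobayashi--Royden metric, then the complex Monge--Amp\`ere equation for the K\"ahler--Einstein metric) is exactly the Wu--Yau route, but your Step~1 contains a claim that fails as stated. You want \emph{injective} charts (``a biholomorphism $\psi_p$ from $B(0,r_0)$ onto a neighbourhood of $p$'') and propose to extract a uniform lower bound on the injectivity radius from the negative pinching. No such bound exists under the hypotheses: a finite-volume cusped quotient of complex hyperbolic space (already a cusped hyperbolic Riemann surface for $n=1$) is a complete K\"ahler manifold with \emph{constant} negative holomorphic sectional curvature whose injectivity radius tends to $0$. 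This is precisely why Wu--Yau introduce \emph{quasi}-bounded geometry: the map $\psi$ is only required to be a nonsingular holomorphic map, not injective, and Theorem~\ref{thm:equiv-quasi-bounded-geometry} characterizes the property purely by the bounds $\sup_M|\nabla^q R_m|\le C_q$, with no injectivity-radius input. Your Steps~2 and~3 survive this correction, since producing a holomorphic disk of definite size through $p$ with prescribed derivative (Lemma~\ref{quasi-bounded-Kobayashi}) and solving the Monge--Amp\`ere equation do not require the charts to be injective; the lower bound on $\chi_M$ is Lemma~\ref{Kobayashi-upper-bound} and is fine.

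Two further points need repair. First, Shi's estimates bound $|\nabla^q R_m|$ of the \emph{evolved} metric $\omega(t)$ for $t>0$; they cannot be ``transferred to the original metric,'' whose curvature derivatives are not controlled by a pointwise pinching of $H$ (your polarization bound $|R_m|\le c_n\kappa_2$ is correct, but it is a zeroth-order statement only). The correct move is to replace $\omega$ by $\tilde\omega=\omega(t_0)$, which is uniformly equivalent to $\omega$, and then to verify by a tensor maximum principle that the negative upper bound on the holomorphic sectional curvature survives for a short time --- this is Wu--Yau's Lemma~13, whose extension to complements of compact sets is the content of Lemma~\ref{lem:Wang-Xiong Shi} above; without it, the input $H(\tilde\omega)\le-\tilde\kappa_1$ needed for Step~3 is unavailable. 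Second, in Step~3 the zeroth-order estimate is not soft: on a noncompact manifold one cannot normalize $u$ by integration, and the classical Cheng--Yau scheme requires $\Ric(\omega)+\omega=\sqrt{-1}\,\p\op F$ with $F$ bounded, which does not follow from the hypotheses (only $H$, not $\Ric$, is pinched). The actual mechanism is the Royden--Wu--Yau Schwarz lemma, which converts $H(\omega)\le-\kappa_1$ together with $\Ric(\omega_{\ke})=-\omega_{\ke}$ into a trace bound; the ``alternative'' you mention parenthetically is in fact the essential argument and is where the holomorphic-sectional-curvature hypothesis earns its keep.
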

	
	As an interesting application of equivalence of invariant metrics, it is recently showed by the first-named author that the non-equivalence of invariant metrics can be used to show the non-existence of complete K\"ahler metric whose holomorphic sectional curvature is negatively pinched on pseudoconex domains in $\mathbb{C}^n$ under some conditions (see \cite{Cho23}).
	
	Based on Theorem~\ref{thm:Wu-Yau2}, one possible method to show the equivalence of the invariant metrics on a complete K\"ahler manifold $(M,\omega)$ is to prove that the holomorphic sectional curvature of $\omega$ has a negative range. As explicit formulas are recently obtained for the Bergman kernels on certain weakly pseudoconvex domains (e.g., see \cite{BT15, BT16-2, PJD13, JPDA94} and references therein), one could attempt to compute the holomorphic sectional curvature of the Bergman metric to establish the equivalence of the invariant metrics (for example, see \cite{GCYY20}). However, in general, it seems to be a daunting task to compute the holomorphic sectional curvature for nontrivial pseudoconvex domains even with explicit formulas of the Bergman kernels. 
	
	Indeed, for the bounded pseudoconvex domains, even for the  class of convex domains or strictly pseudoconvex domains, the curvature information of Bergman metric is known only near the boundary and not in the interior. The holomorphic sectional curvature of the Bergman metric has values between $-\infty$ and $+2$ \cite{KobayashiShoshichi59,DinewZywomir10}, but there is an example \cite{HerbortGregor07} of a semi-finite type pseudoconvex domain in which the holomorphic sectional curvature of Bergman metric blows up to $-\infty$. 
	
	Our main result in this paper is that, neither requiring the negative range of curvature as Wu--Yau theorems do, nor specifying the type of pseudoconvex domains, we provide a concrete approach to compare invariant metrics. Our method is based on knowledge of the Bergman kernel and can be applied to general bounded pseudoconvex domains $\Omega$ in $\mathbb{C}^n$ when an explicit description of the Bergman kernel near the boundary of $\Omega$ is available. 
	
	To state the main result (Theorem~\ref{thm:comparison}) below,	we define the fundamental domain $\widetilde{M}$ of a complex manifold $M$ to be the subset of $M$ which contains exactly one point from each of the orbits of the group action by the automorphism group of $M$. An automorphism $f$ of $M$ means $f$ and its inverse are holomorphic. 
	
	\begin{mainthm}\label{thm:comparison}
		Let $(M,\omega_B)$ be an $n$-dimensional noncompact K\"ahler manifold with a complete Bergman metric $\omega_B$ of bounded curvature, where $B$ denotes the Bergman kernel on $M$ (as the $(n,n)$-form). Then the following statements hold:
		
		1. Assume that $\frac{B}{\omega^n_{B}}$ is a bounded function for some fundamental domain $\widetilde{M}$. Here $\omega^n_{B} \coloneqq \omega_{B}\wedge \cdots \wedge \omega_{B}$ ($n$-times). Then there exist a complete K\"ahler--Einstein metric $\omega_{KE}$ of negative scalar curvature and a constant $C_1>0$ such that $\omega_{KE}$ is uniformly equivalent to $\omega_{B}$ by $C_1$, i.e.,  
		\begin{equation*}
			\frac{1}{C_1}\omega_{KE}(v,v)\leq \omega_{B}(v,v) \leq C_1\omega_{KE}(v,v) \qquad \text{ for all } v\in T'M.
		\end{equation*}
		
		2. Assume that there exists a compact subset $K$ in $M$ such that the holomorphic sectional curvature of $\omega_{B}$ is negative outside of $K$, and that $M$ is biholomorphically and properly embedded into $B_{N}$, $N\geq n$, where $B_{N}$ is the unit ball in $\mathbb{C}^N$. Then the Carath\'eodory--Reiffen metric $\gamma_{M}$ is not essentially zero, and the Bergman metric is uniformly equivalent to the Kobayashi--Royden metric, i.e., there exists $C_2>0$ such that
		\begin{equation*}
			\frac{1}{C_2}\chi_{M}(p;v) \leq \sqrt{\omega_{B}(v,v)} \leq C_2\chi_{M}(p;v) \qquad \text{ for all } v\in T_p'M, \  p\in M,
		\end{equation*}
		where $\chi_{M}$ is the Kobayashi--Royden metric on $M$. Moreover, if $N=n$, the Bergman metric is uniformly equivalent to the complete K\"ahler--Einstein metric of negative scalar curvature. 
	\end{mainthm}
	
	\begin{rema}\label{rmk:comparison-Kobayashi}
		
		Under the same assumptions of Theorem~\ref{thm:comparison}, but without additional assumptions of the first and second statements, we obtain the following from \cite{WuDaminYauShing-Tung2016}: there exists $C_0>0$, which only depends on $n$ and the curvature range of $\omega_{B}$, such that 
		\begin{equation*}
			\chi_{M}(p;v) \leq C_0 \sqrt{\omega_{B}(v,v)} \qquad \text{ for all } v\in T'_{p}M, \  p\in M.
		\end{equation*}
		(See Remark \ref{rem-details} for the details.)
	\end{rema}
	
	The second statement of Theorem~\ref{thm:comparison} differs from the Wu--Yau theorems (Theorems \ref{thm:Wu-Yau1} and \ref{thm:Wu-Yau2}) in that the Bergman metric's holomorphic sectional curvature is not required to be everywhere negative, but it still ensures the equivalence of invariant metrics. For the other assumption, we note that every bounded strictly pseudoconvex domain in $\mathbb{C}^n$ admits a proper holomorphic embedding into a ball (for example, see \cite[p.11]{ForstnerivcFranc18}). 
	
	To demonstrate the effectiveness of our method, we consider invariant metrics on a two-parameter family of $3$-dimensional bounded domains defined by
	\begin{equation}\label{eq:main-example}
		E_{p,\lambda}=\{(x,y,z)\in \mathbb{C}^3 ; (|x|^{2p}+|y|^2)^{1/{\lambda}}+|z|^2<1  \},\qquad p,\lambda>0.
	\end{equation}
	When $p=\lambda=1$, the domain $E_{p,\lambda}$ is the unit ball in $\mathbb{C}^3$. When $\lambda=1$ and $p\geq 1/2$, this reduces to the well-known convex egg (Thullen) domains whose invariant metrics are uniformly equivalent (\cite{GCYY20, MR3478940}). With other pairs of $(p,\lambda)$ for \eqref{eq:main-example}, the boundary limits of the holomorphic sectional curvature of the Bergman metric are not well-defined, so neither squeezing functions nor the Wu--Yau theorems can be applied.
	However, we show that Theorem \ref{thm:comparison} can be applied. For this purpose, we use  a concrete formula for the Bergman kernel of $E_{p,\lambda}$, which is obtained in \cite{BT15}. We also verify the Cheng's conjecture on $E_{p,\lambda}$ in the process of calculation. Namely, we show that the Bergman metric and the complete K\"ahler--Einstein metric is the same on $E_{p,\lambda}$  if and only if $p=\lambda=1$ (Proposition~\ref{prop:aysmp-Ric}).
	
	In the last section, we obtain a result on the Carath\'eodory--Reiffen metric which is missing in the Wu--Yau theorems. Classical invariant metrics include the Carath\'eodory--Reiffen metric whose definition is based on the existence of non-constant bounded holomorphic functions on noncompact complex manifolds. However, showing the existence of such functions still remains as a big challenge in hyperbolic complex geometry. 
	
	The upper bounds of the Carath\'eodory--Reiffen metric have been studied extensively. As for comparison between Carath\'eodory--Reiffen metric and the Bergman metric on the bounded domains, the first result is obtained by Qi-Keng Lu \cite{LookKH58} and then on manifolds by K. T. Hahn \cite{HahnKyongT76,HahnKyongT77}.  Further developments are made by T. Ahn, H. Gaussier and K. Kim \cite{AhnTaeyongGaussierHerveandKimKangTae16}. Very recently, a comparison of Carath\'eodory distance and K\"ahler--Einstein distance of Ricci curvature $-1$ for certain weakly pseudoconvex domains is established by the first-named author \cite{GC21}. 
	
	Our result in the last section is  a lower bound of the integrated Carath\'eodory--Reiffen metric (Theorem \ref{thm:lower-bounded-c-metric}). The positive lower bound of the Carath\'eodory--Reiffen metric is important in that it is the smallest invariant metric among invariant metrics \cite{GC21,JP13}, and it provides quantitative information about non-constant bounded holomorphic functions (also, see \cite{MCGCMGGY21}).
	
	\medskip
	
	The article is organized as follows: In Section 2, we review the definitions of the invariant metrics.  In the next section, we recall the quasi-bounded geometry and a result on comparison with the Kobayashi--Royden metric.  In Section 4, we apply Shi's estimate on K\"ahler--Ricci flow outside of a compact subset on noncompact K\"ahler manifold. In Section 5, we prove Theorem~\ref{thm:comparison} by generating a complete K\"ahler metric with negatively pinched holomorphic sectional curvature and applying the Wu--Yau theorems.  In Section 6, we perform explicit calculation on $E_{p,\lambda}$ for any $(p,\lambda)$ to verify the bounded curvature of the complete Bergman metric, and the hypothesis of Theorem~\ref{thm:comparison}-3. 
	In the last section, we prove Theorem \ref{thm:lower-bounded-c-metric} to obtain an integrated lower bound of the Carath\'eodory--Reiffen metric in the setting of Theorem~\ref{thm:Wu-Yau1}.

	\subsection*{Acknowledgments}
	GC is partially supported by Simons Travel funding. KHL is partially supported by a grant from the Simons Foundation (\#712100). Authors appreciate valuable comments from anonymous referees.

	\section{Preliminaries}
	Let $M$ be an $n$-dimensional complex manifold equipped with a complex structure $J$ and a Hermitian metric $g$. The complex structure $J : T_{\mathbb{R}}M \rightarrow T_{\mathbb{R}}M$ is a real linear endomorphism that satisfies for every $x \in M$, and $X,Y \in T_{\mathbb{R},x} M$, $g_x(J_x X,Y)=-g_x(X,J_xY)$, and for every $x \in M$, $J_x^2=-\mathbf{Id}_{T_x M} $. We decompose the complexified tangent bundle  $T_{\mathbb{R}}M\otimes_{\mathbb{R}}\mathbb{C}=T'M\oplus \overline{T'M}$, where $T'M$ is the eigenspace of $J$ with respect to the eigenvalue $\sqrt{-1}$ and $\overline{T'M}$ is the eigenspace of $J$ with respect to the eigenvalue $-\sqrt{-1}$. We can regard $v, w$ as real tangent vectors, and $\eta,\xi$ as corresponding holomorphic $(1,0)$ tangent vectors under the $\mathbb{R}$-linear isomorphism $T_{\mathbb{R}}M \rightarrow T'M$, i.e. $\eta=\frac{1}{{\sqrt{2}}}(v-\sqrt{-1}Jv), \xi=\frac{1}{{{\sqrt{2}}}}(w-\sqrt{-1}Jw)$.
	
	A {Hermitian metric} on $M$ is a positive definite Hermitian inner product
	\[
	g_p : T'_p M \otimes \overline{T'_p M} \rightarrow \mathbb{C}
	\]
	which varies smoothly for each $p \in M$. The metric $g$ can be decomposed into the real part denoted by $\operatorname{Re}(g)$, and the imaginary part denoted by $\operatorname{Im}(g)$. The real part $\operatorname{Re}(g)$ induces an inner product called the induced Riemannian metric of $g$, an alternating $\mathbb{R}$-differential $2$-form. Define the $(1,1)$-form $\omega \coloneqq -\frac{1}{2}\operatorname{Im}(g)$, which is called the fundamental $(1,1)$-form of $g$ or the K\"ahler metric. In local coordinates this form can written as
	\[
	\omega=\frac{\sqrt{-1}}{2}\sum_{i,j=1}^{n}g_{i\overline{j}}dz_i \wedge d\overline{z_j}.
	\]
	
	The components of the curvature $4$-tensor of the Chern connection associated with the Hermitian metric $g$ are given by
	\begin{align*}
		& R_{i\overline{j}k\overline{l}}:= R(\frac{\partial}{\partial z_i},\frac{\partial}{\partial z_i},\frac{\partial}{\partial z_i},\frac{\partial}{\partial z_i})
		\\
		& =g \left( \nabla^c_{\frac{\partial}{\partial z_i} } \nabla^c_{\frac{\partial}{\partial \overline{z_j}} }\frac{\partial}{\partial z_k} -\nabla^c_{\frac{\partial}{\partial \overline{z_j}} } \nabla^c_{\frac{\partial}{\partial z_i}}\frac{\partial}{\partial z_k} -\nabla^c_{[{\frac{\partial}{\partial z_i} },{\frac{\partial}{\partial \overline{z_j}} }]} {\frac{\partial}{\partial z_k} }, {\frac{\partial}{\partial \overline{z_l}}}\right)
		\\
		& =-\frac{\partial^2 g_{i\overline{j}}}{\partial z_k \partial \overline{z}_l}+\sum_{p,q=1}^{n}g^{q\overline{p}}\frac{\partial g_{i\overline{p}}}{\partial z_k}\frac{\partial g_{q\overline{j}}}{\partial \overline{z}_l},
	\end{align*}
	where $i,j,k,l\in \left\{1,\dots, n \right\}$.
	
	The holomorphic sectional curvature with the unit direction $\eta$ at $x \in M$ (i.e., $g_{\omega}(\eta,\eta)=1$) is defined by
	\begin{equation*} H(g)(x,\eta)=R(\eta,\overline{\eta},\eta,\overline{\eta})=R(v,Jv,Jv,v),
	\end{equation*}
	where $v$ is the real tangent vector corresponding to $\eta$. We will often write $H(g)(x,\eta)=H(g)(\eta)=H(\eta)$.
	The Ricci tensor of a K\"ahler metric $\omega$ is defined by \begin{equation*} \Ric(\omega):=-\sqrt{-1}\partial \overline{\partial} \log \det (g).
	\end{equation*} 
	Given any complex manifold $M$, for each $p \in M$ and a tangent vector $v$ at $p$, define the Carath\'eodory--Reiffen metric and the Kobayashi--Royden metric by
	\[\gamma_{M}(p;v):=\sup\left\{|df(p)(v)|; \ f : M \rightarrow \mathbb{D},  f(p)=0, \text{$f$ holomorphic} \right\}, \]
	\[ \chi_{M}(p;v):=\inf\left\{\frac{1}{R}; \ f :  R\mathbb{D} \rightarrow M, f(0)=p, df(\tfrac{\partial}{\partial z}{|_{z=0}})=v, \text{$f$ holomorphic} \right\}, \]
	respectively.
	
	The Bergman metric is defined in terms of the Bergman kernel. Let $\Lambda^{(n,0)}M$ be the space of smooth complex differential $(n,0)$-forms on $M$. For $\varphi,\psi\in \Lambda^{(n,0)}M$, define \[ \langle \varphi,\psi \rangle = (-1)^{n^2/2}\int_{M} \varphi \wedge \overline{\psi}, \] and \[ ||\varphi||=\sqrt{\langle \varphi,\varphi \rangle}.\] Let $L^2_{(n,0)}$ be the completion of \[\left\{\varphi \in \Lambda^{(n,0)}M; ||\varphi||<+\infty  \right\} \] with respect to $||\cdot||$. Then $L^2_{(n,0)}$ is a separable Hilbert space with respect to the inner product $\langle\cdot,\cdot \rangle$. 
	
	Define $\mathcal{H}=\left\{\varphi \in L^2_{(n,0)} ; \varphi \text{ is holomorphic}  \right\}$. Suppose $\mathcal{H} \neq 0$. Let $\left\{e_j \right\}_{j\geq 0}$ be an orthonormal basis of $\mathcal{H}$ with respect to $\langle\cdot,\cdot \rangle$. Then the $2n$-form on $M\times M$, defined by \[{B}(x,y):=\sum_{j\geq 0}e_j(x)\wedge \overline{e}_j(y), 
	\qquad x,y \in M, \] is called the {{Bergman kernel}} of $M$. Suppose for some point $p \in M$, we have ${B}(p,p)\neq 0$. Write ${B}(z,z)=b(z,z) dz_1 \wedge \cdots \wedge dz_n \wedge d\overline{z}_1\wedge \cdots \wedge d\overline{z}_n$ in terms of local coordinates $(z_1,\cdots,z_n)$. Define \[\omega_{B}(z):=\sqrt{-1}\partial \overline{\partial}\log b(z,z). \] If the real $(1,1)$-form $\omega_{B}$ is positive definite, we call the corresponding Hermitian metric $g^B_{M}$ the {{Bergman metric}}. By definition, $g^B_{M}$ is K\"ahler.
	
	Lastly, the {K\"ahler--Einstein metric} $\omega_{KE}$ means the K\"ahler metric which is also the Einstein metric, and the {K\"ahler--Einstein metric} of the negative scalar curvature becomes an invariant metric.  
	
	We will use the following lemma to prove Theorem~\ref{thm:comparison}:
	
	\begin{lemm}[{\cite[Lemma 19]{WuDaminYauShingTung20}}] \label{Kobayashi-upper-bound}
		Let $(M,\omega)$ be a Hermitian manifold such that the holomorphic sectional curvature has the upper bound $-\kappa<0$. Then the Kobayashi--Royden metric satisfies
		\begin{equation*}
			\chi_{M}(x,v)\geq \sqrt{\frac{\kappa}{2}}|v|_{\omega},
		\end{equation*}
		for each $x\in M,v\in T'_x M$. 
	\end{lemm}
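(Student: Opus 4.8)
The plan is to derive this from the Ahlfors--Schwarz lemma: a holomorphic map from a disc carrying the Poincar\'e metric of constant holomorphic sectional curvature $-\kappa$ into a Hermitian manifold whose holomorphic sectional curvature is at most $-\kappa$ is distance-decreasing. Since the source is one-dimensional, only the upper bound on the holomorphic sectional curvature of the target enters (no Ricci or bisectional hypothesis is needed). The first step is to unwind the definition of $\chi_M$: fixing $x\in M$ and $v\in T'_xM$, it suffices to show that every holomorphic $f:R\mathbb{D}\to M$ with $f(0)=x$ and $df(\partial/\partial z|_{z=0})=v$ satisfies $R^{2}|v|_\omega^{2}\le 2/\kappa$; then $1/R\ge\sqrt{\kappa/2}\,|v|_\omega$, and passing to the infimum over all such $f$ yields the claimed lower bound for $\chi_M(x,v)$. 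Precomposing with the biholomorphism $w\mapsto Rw$, I may take $R=1$ (replacing $v$ by $Rv$), so the goal reduces to: for every holomorphic $g:\mathbb{D}\to M$ with $g(0)=x$, the squared $\omega$-length of $dg(\partial/\partial w)$ at the origin is at most $2/\kappa$.

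For the estimate, I assume $g$ nonconstant (otherwise there is nothing to prove) and write $g^{*}\omega=h\,\tfrac{\sqrt{-1}}{2}\,dw\wedge d\bar w$, so that $h=\sum_{i,j}g_{i\bar j}(g(w))\,(\partial g^{i}/\partial w)\,\overline{(\partial g^{j}/\partial w)}\ge 0$ vanishes only on the discrete common zero set of the functions $\partial g^{i}/\partial w$ (computed in local coordinates). A direct computation with the Chern curvature tensor shows that on $\{h>0\}$ one has
\begin{equation*}
\frac{\partial^{2}\log h}{\partial w\,\partial\bar w}\ \ge\ \kappa\,h ,
\end{equation*}
the hypothesis entering because $-h^{-1}\,\partial_{w}\partial_{\bar w}\log h$ is the holomorphic sectional curvature of the induced metric $g^{*}\omega$ on $\{h>0\}$, which along a holomorphic curve is bounded above by the ambient holomorphic sectional curvature of $\omega$ in the direction $dg(\partial/\partial w)$, i.e.\ by $-\kappa$. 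I would then compare $h$ with the Poincar\'e metric $\rho_{r}=(2/\kappa)\,r^{2}\,(r^{2}-|w|^{2})^{-2}$ on the sub-disc $\mathbb{D}_{r}$, $r<1$, which satisfies $\partial_{w}\partial_{\bar w}\log\rho_{r}=\kappa\rho_{r}$ and blows up on $\partial\mathbb{D}_{r}$. The quotient $h/\rho_{r}$ is continuous on $\mathbb{D}_{r}$ and tends to $0$ at $\partial\mathbb{D}_{r}$, so it attains its maximum at an interior point $w_{0}$, necessarily with $h(w_{0})>0$; applying the maximum principle to $\log(h/\rho_{r})$ at $w_{0}$ gives $0\ge\partial_{w}\partial_{\bar w}\log(h/\rho_{r})\ge\kappa h(w_{0})-\kappa\rho_{r}(w_{0})$, hence $h/\rho_{r}\le 1$ on all of $\mathbb{D}_{r}$. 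Evaluating at $w=0$ gives $h(0)\le\rho_{r}(0)=2/(\kappa r^{2})$, and letting $r\to 1$ yields $h(0)\le 2/\kappa$. Since $h(0)$ is precisely the squared $\omega$-length of $dg(\partial/\partial w)$ at the origin, this is the bound isolated in the reduction, and the proof is complete.

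The main obstacle is the curvature inequality $\partial_{w}\partial_{\bar w}\log h\ge\kappa h$ in the \emph{Hermitian} category, where $\omega$ need not be K\"ahler: one must verify that the Chern connection restricted to a holomorphic curve still produces the holomorphic sectional curvature of the induced metric in the form $-h^{-1}\partial_{w}\partial_{\bar w}\log h$, and that the term relating this induced curvature to the ambient holomorphic sectional curvature has the favorable sign. The rest is routine: the exhaustion by sub-discs $\mathbb{D}_{r}$ is forced on us because the comparison (Poincar\'e) metric is not available on $\mathbb{D}$ itself, and the remaining manipulation is the classical maximum-principle comparison, the one point demanding care being the bookkeeping of normalization constants so that the final factor comes out as $\sqrt{\kappa/2}$ rather than some other multiple of $\sqrt{\kappa}$.
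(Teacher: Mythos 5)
The paper gives no proof of this lemma—it is quoted verbatim as Lemma 19 of Wu--Yau—and your Ahlfors--Schwarz/maximum-principle argument is exactly the standard proof from that source: rescale to the unit disc, establish $\partial_w\partial_{\bar w}\log h\ge\kappa h$ from the curvature-decreasing property of the Chern connection on holomorphic curves, and compare with the Poincar\'e metric of curvature $-\kappa$ on sub-discs. The normalization is consistent with the paper's convention $H(g)=-g^{-1}\partial_w\partial_{\bar w}\log g$, and the constant $\sqrt{\kappa/2}$ is sharp on the model disc, so the proposal is correct as written.
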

	
	\section{Quasi-bounded geometry}
	In this section, we review some results from Section 2 in \cite{WuDaminYauShingTung20}.  
	
	\medskip
	
	The notion of quasi-bounded geometry is introduced by S.\,T. Yau and S.\,Y. Cheng (\cite{CY80}). Let $(M,\omega)$ be an $n$-dimensional complete K\"ahler manifold. For a point $p\in M$, let $B_{\omega}(p;\rho)$ be the open geodesic ball centered at $p$ in $M$ of radius $\rho$; we omit the subscript $\omega$ if there is no peril of confusion. Denote by $B_{\mathbb{C}^n}(r)$ the open ball centered at the origin in $\mathbb{C}^n$ of radius $r$ with respect to the standard metric $\omega_{\mathbb{C}^n}$. 
	
	\begin{def}\label{def_quasi_bounded}
		An $n$-dimensional K\"ahler manifold $(M,\omega)$ is said to have {\em quasi-bounded geometry} if there exist two constants $r_2>r_1>0$ such that for each point $p\in M$, there is a domain $U\subset \mathbb{C}^n$ and a nonsingular holomorphic map $\psi : U \rightarrow M$ satisfying 
		
		(1) $B_{\mathbb{C}^n}(r_1)\subset U \subset B_{\mathbb{C}^n}(r_2)$ and $\psi(0)=p$;
		
		(2) there exists a constant $C>0$ depending only on $r_1,r_2,n$ such that 
		\begin{equation}\label{eq:2.1}
			C^{-1}\omega_{\mathbb{C}^n} \leq \psi^{*}(\omega) \leq C\omega_{\mathbb{C}^n} \quad \text{ on } U;
		\end{equation}
		
		(3)	for each integer $l\geq 0$, there exists a constant $A_l$ depending only on $l,n,r_1,r_2$ such that 
		\begin{equation}\label{eq:2.2}
			\sup_{x\in U}\left |\frac{\partial^{|\nu|+|\mu|}g_{i\overline{j}} }{\partial v^{\mu}\, \partial \overline{v}^{\nu} } \right |\leq A_l, \text{ for all } |\mu|+|\nu|\leq l, 
		\end{equation}
		where $g_{i\overline{j}}$ are the components of $\psi^{*}\omega$ on $U$ in terms of the natural coordinates $(v^1,\cdots,v^n)$, and $\mu,\nu$ are multiple indices with $|\mu|=\mu_1+\cdots +\mu_n$.
		We call $r_1$ a {\em radius} of quasi-bounded geometry. 
	\end{def}

	By applying the $L^2$-estimate, the following theorem is proved. 
	
	\begin{theo}[\cite{WuDaminYauShingTung20}, Theorem 9]\label{thm:equiv-quasi-bounded-geometry} Let $(M,\omega)$ be a complete K\"ahler manifold.  Then the manifold $(M,\omega)$ has quasi-bounded geometry if and only if for each integer $q\geq 0$, there exists a constant $C_q>0$ such that 
		\begin{equation}\label{eq:bg}
			\sup_{p \in M}|\nabla^q R_m|\leq C_q,
		\end{equation}
		where $R_m=\{R_{i\overline{j}k\overline{l} } \}$ denotes the curvature tensor of $\omega$. In this case, the radius of quasi-bounded geometry depends only on $C_0$ and the dimension of $M$. 
	\end{theo}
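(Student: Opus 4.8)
The plan is to prove the equivalence stated in Theorem~\ref{thm:equiv-quasi-bounded-geometry} in both directions, with the substantive content being the backward implication. For the forward direction, assume $(M,\omega)$ has quasi-bounded geometry. At each point $p$, pull back the metric via the chart $\psi : U \to M$ to $\psi^*\omega$ on $U \supset B_{\mathbb{C}^n}(r_1)$; by \eqref{eq:2.2} all coordinate derivatives of $g_{i\overline j}$ up to any order $l$ are bounded uniformly, and by \eqref{eq:2.1} the matrix $(g_{i\overline j})$ and its inverse are uniformly bounded. The curvature components $R_{i\overline j k \overline l}$ and their covariant derivatives $\nabla^q R_m$ are universal polynomial expressions in the $g_{i\overline j}$, the entries of $(g^{i\overline j})$, and finitely many coordinate derivatives of the $g_{i\overline j}$; hence each $|\nabla^q R_m|$ is bounded by a constant $C_q$ depending only on $n, r_1, r_2$ and $A_0,\dots,A_{q+2}$, and one checks that $C_0$ in particular depends only on $r_1,r_2,n$ through $A_2$. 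Since $p$ was arbitrary, \eqref{eq:bg} follows.

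For the backward direction, assume $\sup_M |\nabla^q R_m| \le C_q$ for all $q$. The goal is to produce, near each $p \in M$, a biholomorphic chart in which the metric is two-sided comparable to the Euclidean one with all coordinate derivatives controlled. First I would fix $p$ and work in geodesic normal coordinates (or, better for the K\"ahler setting, holomorphic normal coordinates centered at $p$): the injectivity radius is bounded below by a constant depending only on $C_0$ and $n$ (this uses the curvature bound together with a lower volume bound, which itself follows from the curvature bound via comparison — or one invokes the standard result that bounded curvature plus a noncollapsing estimate gives an injectivity radius lower bound; in the K\"ahler case the relevant input is Theorem~9's proof in \cite{WuDaminYauShingTung20}). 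In such coordinates the metric at $p$ is the identity, and its first derivatives vanish; the bounds $|\nabla^q R_m| \le C_q$ then control, via the radial ODE satisfied by the metric in normal coordinates (a Jacobi-type equation whose coefficients are curvature and its derivatives), all higher derivatives of $g_{i\overline j}$ on a ball of definite size $r_1 = r_1(C_0,n)$. Taking $U$ to be the coordinate image gives \eqref{eq:2.1} and \eqref{eq:2.2}; nonsingularity of $\psi$ on $B_{\mathbb{C}^n}(r_1)$ is immediate from the lower injectivity radius bound. Passing between real normal coordinates and genuinely holomorphic coordinates costs at most a bounded change of the estimates, using that $M$ is K\"ahler so that the complex structure is parallel to the needed order.

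The main obstacle is the injectivity radius (equivalently noncollapsing) lower bound: bounded curvature alone does not bound the injectivity radius below without a volume or conjugate-point input, so one must supply this. In the complete K\"ahler setting the fix is to use that $|\nabla^q R_m| \le C_q$ for \emph{all} $q$ (not just $q=0$) lets one run the standard harmonic-radius or local-coordinate construction of Cheng--Yau type, or alternatively to observe that the conclusion already known — that this is exactly Theorem~9 of \cite{WuDaminYauShingTung20} — may be cited. Since this theorem is quoted verbatim from \cite{WuDaminYauShingTung20}, in the paper the honest move is simply to refer to that source for the proof; the sketch above is the structure one would reconstruct if proving it from scratch. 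The only genuinely paper-specific remark is that we only ever need the case $q=0$ (or $q$ up to a small fixed number) in the applications, and that the radius $r_1$ and constant $C$ in \eqref{eq:2.1} depend only on $C_0$ and $n$, which is what makes the uniform comparison of invariant metrics in Theorem~\ref{thm:comparison} work.
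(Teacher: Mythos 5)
The paper does not prove this statement; it is quoted verbatim from \cite{WuDaminYauShingTung20} (Theorem 9), and the only indication of method the paper gives is the sentence preceding it, ``By applying the $L^2$ estimate, the following theorem was proved.'' You correctly identify that citing the source is the appropriate move, and your forward direction (curvature and its covariant derivatives are universal polynomials in $g_{i\overline j}$, $g^{i\overline j}$ and coordinate derivatives of $g_{i\overline j}$, all controlled by \eqref{eq:2.1} and \eqref{eq:2.2}) is fine. However, your reconstruction of the backward direction misidentifies where the difficulty lies, in two ways. First, the injectivity-radius issue you flag as ``the main obstacle'' is not an obstacle at all: in Definition~\ref{def_quasi_bounded} the map $\psi: U \to M$ is only required to be a \emph{nonsingular} holomorphic map, not injective. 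This is precisely the point of the prefix ``quasi''---one only needs the exponential map to be an immersion on a ball of definite size, i.e.\ a lower bound on the conjugate radius, and that follows from $|R_m|\le C_0$ alone by Rauch comparison (conjugate radius $\ge \pi/\sqrt{C_0}$), with no volume or noncollapsing input. This is also exactly why the radius of quasi-bounded geometry can be asserted to depend only on $C_0$ and the dimension.

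Second, the step you dispose of in one sentence---``passing between real normal coordinates and genuinely holomorphic coordinates costs at most a bounded change''---is the actual technical heart of the theorem. Parallelism of $J$ lets you build an almost-holomorphic coordinate system (say, by parallel transport of a unitary frame along radial geodesics), but producing genuinely \emph{holomorphic} functions on a ball of uniform size whose differentials are uniformly nondegenerate and whose derivatives obey \eqref{eq:2.2} requires solving $\overline{\partial}u=f$ with uniform $L^2$ estimates of H\"ormander type and then bootstrapping with elliptic estimates; this is the ``$L^2$ estimate'' the paper alludes to. As written, your sketch would establish bounded geometry in the real Riemannian sense but not the existence of the holomorphic charts $\psi$ demanded by Definition~\ref{def_quasi_bounded}. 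Since the result is imported rather than proved in this paper, none of this affects the paper's logic, but the sketch should not be mistaken for an outline of the cited proof.
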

	
	Also, we will use the following lemma: 
	
	\begin{lemm}[{\cite[Lemma 20]{WuDaminYauShingTung20}}] \label{quasi-bounded-Kobayashi}
		Suppose a complete K\"ahler manifold $(M,\omega)$ has quasi-bounded geometry. Then the Kobayashi--Royden metric satisfies
		\begin{equation*}
			\chi_{M}(x,v)\leq C|v|_{\omega},
		\end{equation*}
		for each $x\in M,v\in T'_x M$, where $C$ depends only on the radius of quasi-bounded geometry of $(M,\omega)$. 
	\end{lemm}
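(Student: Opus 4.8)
**Proof proposal for Lemma (\cite[Lemma 20]{WuDaminYauShingTung20}, the quasi-bounded upper bound for the Kobayashi--Royden metric).**

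The plan is to produce, from a point $x \in M$ and a vector $v \in T'_x M$, an explicit holomorphic disk of controlled radius through $x$ in the direction $v$, using the local holomorphic chart supplied by quasi-bounded geometry. Fix $x$, and let $\psi : U \to M$ be the nonsingular holomorphic map from Definition~\ref{def_quasi_bounded} with $\psi(0) = x$ and $B_{\mathbb{C}^n}(r_1) \subset U$. Since $\psi$ is nonsingular at $0$, there is a vector $\tilde v \in \mathbb{C}^n = T_0 \mathbb{C}^n$ with $d\psi_0(\tilde v) = v$; the quasi-isometry estimate \eqref{eq:2.1} gives $|\tilde v|_{\omega_{\mathbb{C}^n}} \leq \sqrt{C}\,|v|_\omega$ (here $C = C(r_1,r_2,n)$). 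Now I would define, for a complex parameter $z$, the holomorphic map $h(z) := \psi\!\left( z \tilde v / |\tilde v|_{\omega_{\mathbb{C}^n}} \right)$; this is defined and holomorphic on the disk of radius $r_1$ in $\mathbb{C}$ because $|z\tilde v/|\tilde v|| < r_1$ forces $z\tilde v/|\tilde v| \in B_{\mathbb{C}^n}(r_1) \subset U$. One checks $h(0) = x$ and $dh\!\left(\frac{\partial}{\partial z}\big|_{z=0}\right) = d\psi_0(\tilde v)/|\tilde v|_{\omega_{\mathbb{C}^n}} = v/|\tilde v|_{\omega_{\mathbb{C}^n}}$.

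Next I would rescale to normalize the derivative. The map $F(z) := h\!\left( z\, |\tilde v|_{\omega_{\mathbb{C}^n}} \right)$ is holomorphic on the disk of radius $r_1/|\tilde v|_{\omega_{\mathbb{C}^n}}$, satisfies $F(0) = x$, and $dF\!\left(\frac{\partial}{\partial z}\big|_{z=0}\right) = v$. Therefore $F$ is an admissible competitor in the definition of the Kobayashi--Royden metric with radius $R = r_1/|\tilde v|_{\omega_{\mathbb{C}^n}}$, which yields
\begin{equation*}
\chi_M(x;v) \leq \frac{1}{R} = \frac{|\tilde v|_{\omega_{\mathbb{C}^n}}}{r_1} \leq \frac{\sqrt{C}}{r_1}\, |v|_\omega.
\end{equation*}
Setting $C' := \sqrt{C}/r_1$, which depends only on $r_1, r_2, n$ — hence, by the last sentence of Theorem~\ref{thm:equiv-quasi-bounded-geometry}, only on $C_0$ and $\dim M$, and in particular only on the radius of quasi-bounded geometry — gives the claimed bound uniformly in $x$ and $v$.

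The routine points to be careful about are: (i) confirming that the rescaled domain disk actually lands inside $U$ — this is exactly where the inclusion $B_{\mathbb{C}^n}(r_1)\subset U$ in condition (1) is used, and it is the reason one cannot simply take an arbitrarily large disk; and (ii) tracking the direction of the inequality in \eqref{eq:2.1} so that the bound on $|\tilde v|_{\omega_{\mathbb{C}^n}}$ in terms of $|v|_\omega$ comes out with the correct constant. The main (and only mild) obstacle is bookkeeping the constants so that the final $C'$ genuinely depends only on the radius of quasi-bounded geometry; condition (3), the higher-derivative bounds \eqref{eq:2.2}, is not needed for this lemma — only conditions (1) and (2) enter. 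Everything else is a direct unwinding of the definition of $\chi_M$.
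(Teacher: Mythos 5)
Your proof is correct and is essentially the standard argument for this lemma: the paper itself only cites \cite[Lemma 20]{WuDaminYauShingTung20} without reproducing a proof, and the proof given there is exactly your construction of a holomorphic disk of radius $r_1/|\tilde v|_{\omega_{\mathbb{C}^n}}$ through $x$ via the quasi-bounded-geometry chart, using only conditions (1) and (2). The constant bookkeeping and the observation that condition (3) is not needed are both accurate (modulo the trivial degenerate case $v=0$).
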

	
	\section{The Maximum Principle and Shi's estimate on K\"ahler--Ricci flow}
	
	Let $(M,\widetilde{\omega})$ be an $n$-dimensional complete noncompact K\"ahler manifold. Suppose for some constant $T>0$ there is a smooth solution $\omega(x,t)>0$ for the evolution equation
	\begin{equation}\label{A.1}
		\begin{cases} \frac{\partial}{\partial t}g_{\alpha \overline{\beta}}(x,t)=-4R_{\alpha \overline{\beta}}(x,t) & \text{ on } M\times [0,T], \\
			g_{\alpha \overline{\beta}}(x,0)=\widetilde{g}_{\alpha \overline{\beta}}(x) & \text{$x \in M$,} \end{cases}
	\end{equation}
	where $g_{\alpha \overline{\beta}}(x,t)$ and $\widetilde{g}_{\alpha \overline{\beta}}$ are the metric components of $\omega(x,t)$ and $\widetilde{\omega}$, respectively. Assume that the curvature $R_m(x,t)=\left\{R_{\alpha \overline{\beta}\gamma \overline{\delta}(x,t) } \right\}$ of $\omega(x,t)$ satisfies
	\begin{equation}\label{A.2}
		\sup_{M\times [0,T]}|R_m(x,t)|^2\leq k_0
	\end{equation}
	for some constant $k_0>0$. 
	
	The following lemma is an extension of Lemma 15 in \cite{WuDaminYauShingTung20} to the case of complement of compact subset. Though the proof is similar, we provide some details to indicate where modifications are needed for the complement.
	
	\begin{lemma}\label{Max-Principle} With the above assumptions, suppose a smooth tensor $\left\{W_{\alpha \overline{\beta}\gamma \overline{\delta}(x,t) } \right\}$ on $M$ with complex conjugation $W_{\alpha \overline{\beta}\gamma \overline{\delta}(x,t) }=W_{\beta \overline{\alpha}\delta \overline{\gamma}(x,t) }$ satisfies 
		\begin{equation}\label{A.3}
			\left(\tfrac{\partial}{\partial t}W_{\alpha \overline{\beta}\gamma \overline{\delta}(x,t) }\right)\eta^{\alpha}\overline{\eta}^{\beta}\eta^{\gamma}\overline{\eta}^{\delta}\leq (\triangle W_{\alpha \overline{\beta}\gamma \overline{\delta}})\eta^{\alpha}\overline{\eta}^{\beta}\eta^{\gamma}\overline{\eta}^{\delta}+C_1|\eta|^4_{\omega(x,t)},
		\end{equation}
		for all $x\in M, \eta\in T'_x M, 0\leq t \leq T$, where $\triangle\equiv 2g^{\alpha \overline{\beta}}(x,t)(\nabla_{\overline{\beta}}\nabla_{\alpha}+\nabla_{\alpha}\nabla_{\overline{\beta}})$ and $C_1$ is a constant. Let  
		\begin{equation*}
			h(x,t)=\max \left\{ W_{\alpha \overline{\beta}\gamma \overline{\delta}}\eta^{\alpha}\overline{\eta}^{\beta}\eta^{\gamma}\overline{\eta}^{\delta}; \eta\in T'_xM, |\eta|_{\omega(x,t)}=1  \right\},
		\end{equation*}
		for all $x\in M$ and $0\leq t\leq T$. For any compact subset $K$ in $M$, suppose 
		\begin{equation}
			\sup_{x\in M, 0\leq t \leq T}|h(x,t)|\leq C_0, \label{A.4}
		\end{equation}
		\begin{equation}
			\sup_{M\backslash K} h(x,0)\leq -\kappa,  \label{A.5}
		\end{equation}
		for some constants $C_0>0$ and $\kappa$. Then, 
		\begin{equation*}
			h(x,t)\leq (8C_0\sqrt{nk_0}+C_1)t-\kappa,
		\end{equation*}
		for all $x\in M\backslash K$ and $0\leq t \leq T$. 	
	\end{lemma}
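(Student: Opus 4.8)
The plan is to adapt the proof of Lemma 15 in \cite{WuDaminYauShingTung20} (the standard maximum principle argument for the Hamilton--Shi type estimate) and localize the negativity hypothesis to the complement of $K$. First I would fix a reference point $x_0 \in M$ and, using the bounded-curvature assumption \eqref{A.2}, introduce an exhaustion function $\varphi$ on $M$ (e.g. built from the distance function to $x_0$ with the standard smoothing) together with the auxiliary quantity
\begin{equation*}
F_\varepsilon(x,t) = h(x,t) - (8C_0\sqrt{nk_0}+C_1)t + \kappa - \varepsilon(\varphi(x) + At)
\end{equation*}
for small $\varepsilon>0$ and a large constant $A=A(k_0,n)$ to be chosen. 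The role of the $\varepsilon\varphi$ term is to force $F_\varepsilon$ to attain its supremum over $M\times[0,T]$ at an interior spatial point (it tends to $-\infty$ as $x\to\infty$ because $h$ is bounded by $C_0$), so that the maximum principle can be applied; $\varepsilon \to 0$ at the end recovers the stated bound.

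The key steps, in order: (1) observe that $h(x,t)$, being a max of smooth quantities over the unit sphere in $T'_xM$, is Lipschitz and satisfies the differential inequality $\partial_t h \le \triangle h + (8C_0\sqrt{nk_0}+C_1)$ in the barrier/support sense — this is where \eqref{A.3} and \eqref{A.4} enter, exactly as in \cite{WuDaminYauShingTung20}, with the $8C_0\sqrt{nk_0}$ term coming from bounding the time-variation of the metric (hence of the ``unit vector'' constraint) via \eqref{A.2}. (2) Suppose for contradiction that $h(x_1,t_1) > (8C_0\sqrt{nk_0}+C_1)t_1 - \kappa$ for some $(x_1,t_1)$ with $x_1 \in M\setminus K$; choosing $\varepsilon$ small enough, $\sup F_\varepsilon > 0$ and is attained at some $(x_*,t_*)$ with $t_*>0$ and $x_*$ in a fixed compact set. (3) Split into two cases according to whether $x_* \in K$ or $x_* \in M\setminus K$. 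If $x_* \in M\setminus K$, then at $(x_*,t_*)$ we have $\partial_t F_\varepsilon \ge 0$, $\triangle F_\varepsilon \le 0$, which combined with the differential inequality in (1) and a suitable choice of $A$ (absorbing $\varepsilon\triangle\varphi$, which is controlled by the Laplacian comparison under \eqref{A.2}) yields $0 \le \partial_t F_\varepsilon \le \triangle F_\varepsilon - \varepsilon A + \varepsilon|\triangle\varphi| < 0$, a contradiction. If $x_* \in K$, use instead that at $t=0$, $F_\varepsilon(x,0) = h(x,0)+\kappa - \varepsilon\varphi(x) \le 0$ on $M\setminus K$ but we need this on $K$ too — here \eqref{A.5} does \emph{not} apply, so one must track that the supremum being positive and attained at interior time forces, by following the trajectory, a contradiction with the bound $h\le C_0$ on $K$ only if $\kappa$ and the elapsed time are compatible; more carefully, the contradiction in the $K$-case is obtained by noting the maximum of $F_\varepsilon$ propagates from $M\setminus K$ into $K$ only through the parabolic boundary, and since $\partial_t F_\varepsilon \ge 0$ at an interior max forces the max to have been achieved earlier, one iterates back to $t=0$ where \eqref{A.5} on $M\setminus K$ plus continuity across $\partial K$ closes the argument.

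The main obstacle is precisely this last point: the hypothesis \eqref{A.5} only controls $h(x,0)$ on $M\setminus K$, not on all of $M$, so the naive global maximum principle does not immediately apply and one cannot simply quote Lemma 15 of \cite{WuDaminYauShingTung20}. The resolution is that the conclusion is only claimed on $M\setminus K$, so it suffices to run the maximum principle on the (noncompact) manifold-with-boundary $M\setminus K$, treating $\partial K$ as a boundary; on that boundary one has the a priori bound $h \le C_0$ from \eqref{A.4}, but $C_0 > -\kappa$ in general, so the boundary term is the wrong sign. To circumvent this, I would instead \emph{not} excise $K$ but keep the global domain and observe that if the first time/place where $F_\varepsilon$ becomes positive occurs at some $x_* \in K$, then by spatial connectedness and continuity there is an earlier time where the positivity ``enters'' through a point of $M\setminus K$ or through $t=0$ in $M \setminus K$, returning to the cases already handled. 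Making this propagation argument precise — essentially a strong-maximum-principle / unique-continuation flavored observation that a positive interior parabolic maximum cannot be ``created'' strictly inside $K$ without having been positive nearby earlier — is the technical heart of the modification, and I would present it by a standard first-time-of-contact argument: let $t_0 = \inf\{t : \sup_{M} F_\varepsilon(\cdot,t) > 0\}$; at $t_0$ the sup is $0$, attained at some $x_0$; if $x_0 \in M\setminus K$ the interior differential inequality gives a contradiction as in step (3), and if $x_0\in K$ one uses that $F_\varepsilon(\cdot,t_0)\le 0$ with equality at $x_0$ together with the strong maximum principle for the operator $\partial_t - \triangle$ applied on a small parabolic neighborhood to conclude $F_\varepsilon \equiv 0$ there, contradicting $F_\varepsilon(\cdot,0) < 0$ near $x_0$ once $\varepsilon>0$ (using $\varphi > 0$). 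Everything else is the routine bookkeeping already carried out in \cite{WuDaminYauShingTung20}.
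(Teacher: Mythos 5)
Your localization device (additive penalization $-\varepsilon(\varphi+At)$ by an exhaustion function, letting $\varepsilon\to 0$ at the end) differs from the paper's: the paper multiplies $h-Ct+\kappa$, with $C=8C_0\sqrt{nk_0}+C_1$, by Shi's cut-off function $\theta$ satisfying \eqref{A.8}--\eqref{A.10}, so that the supremum of $(h-Ct+\kappa)\theta$ is forced onto a fixed compact set and the derivative tests give the clean contradiction $0\le \partial_t(f\theta)=-m_0<0$. Either device can in principle compactify the problem, so that part of your plan is acceptable (modulo the usual care with non-smoothness of the distance function), and you correctly identify the real difficulty: \eqref{A.5} gives no control of $h(\cdot,0)$ inside $K$.

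However, your proposed resolution of that difficulty fails. Your first-time-of-contact argument needs $F_\varepsilon(\cdot,0)<0$ near the contact point when it lies in $K$, but on $K$ the only information at $t=0$ is the two-sided bound $|h|\le C_0$ from \eqref{A.4}, and nothing prevents $h(x,0)+\kappa>0$ on an open subset of $K$ (this happens whenever $\kappa>-C_0$ there, which is the generic situation since $\kappa$ is an arbitrary constant). In that case $\sup_M F_\varepsilon(\cdot,0)>0$ already, your $t_0$ equals $0$, and neither the strong maximum principle nor the ``positivity must enter through the parabolic boundary'' heuristic produces a contradiction; taking the supremum over all of $M$ is precisely what must be avoided. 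The paper instead sets $m_0=\sup_{M\backslash K,\,0\le t\le T}\bigl([h-Ct+\kappa]\theta\bigr)$, i.e.\ the supremum only over $M\backslash K$: then \eqref{A.5} forces $t_*>0$ at a maximizing point $(x_*,t_*)$, the decay of $\theta$ forces $x_*$ into a fixed compact subset of $\overline{M\backslash K}$, and the derivative tests \eqref{A.12} yield the contradiction. (The case $x_*\in\partial K$, where the spatial first- and second-derivative tests at a boundary maximum require separate justification, is the one genuinely delicate point of this strategy; your proposal does not supply an argument for it either.) To repair your write-up, restrict the supremum to $M\backslash K$ from the outset so that \eqref{A.5} is actually usable, and then address the boundary case $x_*\in\partial K$ directly rather than attempting a unique-continuation argument on all of $M$.
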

	
	\begin{proof}
		Denote 
		\begin{equation}
			C=8C_0\sqrt{nk_0}+C_1>0. \label{A.6}
		\end{equation}
		Suppose 
		\begin{equation}
			h(x_1,t_1)-Ct_1+\kappa>0, \label{A.7}
		\end{equation}
		for some $(x_1,t_1)\in M\backslash K \times [0,T]$. Then by \eqref{A.4} we have $t_1>0$. Under the conditions \eqref{A.1} and \eqref{A.2}, it follows from \cite{ShiWan-Xiong97} that there exists a function $\theta$ such that 
		\begin{equation}
			0<\theta(x,t)\leq 1, \text{ on } M\times [0,T],\label{A.8}
		\end{equation}
		\begin{equation}
			\frac{\partial \theta}{\partial t}-\triangle_{\omega(x,t)}\theta +2\theta^{-1}|\nabla \theta|^2_{\omega(x,t)}\leq -\theta \text{ on } M\times [0,T], \label{A.9}
		\end{equation}
		\begin{equation}
			\frac{C^{-1}_2}{1+d_0(x_0,x)}\leq \theta(x,t)\leq \frac{C_2}{1+d_0(x_0,x)} \text{ on } M\times [0,T],\label{A.10}
		\end{equation}
		where $x_0$ is a fixed point in $M$, $d_0(x,y)$ is the geodesic distance between $x$ and $y$ with respect to $\omega(x,0)$, and $C_2>0$ is a constant depending only on $n,k_0$ and $T$. 
		
		Let 
		\begin{equation*}
			m_0=\sup_{M\backslash K,0\leq t \leq T}\left([h(x,t)-Ct+\kappa]\theta(x,t)\right).
		\end{equation*}
		Then $0<m_0\leq C_0+|\kappa|$ by \eqref{A.4},\eqref{A.7}, and \eqref{A.8}. Denote  
		\begin{equation*}
			\Lambda = \frac{2C_2(C_0+CT+|\kappa|) }{m_0}>0. 
		\end{equation*} 
		Then, for any $x\in M\backslash K$ with $d_0(x_0,x)\geq \Lambda$,
		we have \begin{equation*}
			|(h(x,t)-Ct+\kappa)\theta(x,t)|\leq \frac{C_2(C_0+CT+|\kappa|)}{1+d_0(x,x_0)}\leq \frac{m_0}{2}. 
		\end{equation*}
		
		It follows that the function $(h-Ct+\kappa)\theta$ must attain its supremum $m_0$ on the compact set $\overline{B(x_0;\Lambda)}\times [0,T] \subset M\backslash K \times [0,T]$, where $\overline{B(x_0;r)}$ denotes the closure of the geodesic ball with respect to $\omega(x,0)$ centered at $x_0$ of radius $r$. Let 
		\begin{equation*}
			f(x,\eta,t)=\frac{W_{\alpha \overline{\beta}\gamma \overline{\delta}\eta^{\alpha}\overline{\eta}^{\beta}\eta^{\gamma}\overline{\eta}^{\delta}}}{|\eta|^4_{\omega(x,t)}} -Ct+\kappa,
		\end{equation*}
		for all $(x,t)\in M\backslash K \times [0,T], \eta \in T'_x M \backslash \left\{ 0 \right\}$. Then there exist $x_{*}, \eta_{*}, t_{*}$ with $x_{*}\in \overline{B(x_0;r)}, 0\leq t_{*}\leq T, \eta_{*}\in T'_{x_{*}}M$ and $|\eta_{*}|_{\omega(x_{*},t_{*})}=1$, such that 
		\begin{equation*}
			m_0=f(x_{*}, \eta_{*}, t_{*})\theta(x_{*}, t_{*})=\max_{\mathcal{S}_t\times [0,T]}(f\theta ),
		\end{equation*}
		where $\mathcal{S}_t=\left\{(x,\eta)\in T'M; x\in M, \eta\in T'_x M, |\eta|_{\omega(x,t)}=1  \right\}$. Since $h(.,0)$ is a continuous function on $M$, either $x_{*}\in M\backslash K$ or $x_{*}\in \partial K$, $t_{*}>0$ by \eqref{A.5}. Now we extend $\eta_{*}$ to a smooth vector field using the same argument as in the proof of Lemma 15 in \cite{WuDaminYauShingTung20}. Since $f\theta=f(x,\eta(x),t)\theta(x,t)$ attains its maximum at $(x_{*},t_{*})$, we have
		\begin{equation}\label{A.12}
			\tfrac{\partial }{\partial t}(f\theta)\geq 0, \text{ } \nabla (f \theta)=0, \text{ } \triangle (f\theta)\leq 0 \quad \text{ at } (x_{*},t_{*}).
		\end{equation}
		From \eqref{A.12} and \eqref{A.9}, one can see that at the point $(x_{*}, t_{*})$,
		we have \begin{equation*}
			0\leq \tfrac{\partial }{\partial t} (f\theta)=-m_0<0
		\end{equation*}
		(for details, see \cite{WuDaminYauShingTung20}). This yields a contradiction and the proof is completed.
	\end{proof}
	
	The following lemma is an extension of Lemma 13 in \cite{WuDaminYauShingTung20} to the case of complement of a compact subset.
	
	\begin{lemm}\label{lem:Wang-Xiong Shi}
		Let $(M,\omega)$ be an $n$-dimensional complete noncompact K\"ahler manifold. Let $K$ be a compact set in $M$ such that
		\begin{equation}\label{3.1}
			-\kappa_2\leq H(\omega) \leq -\kappa_1 <0 \text{ on } M\backslash K,
		\end{equation} 
		where $H(\omega)$ is the holomorphic sectional curvature and $\kappa_1,\kappa_2$ are positive constants. Then there exists another K\"ahler metric $\widetilde{\omega}$ such that
		\begin{equation}\label{3.2}
			C^{-1}\omega \leq \widetilde{\omega} \leq C \omega \qquad \qquad \text{ on } M,
		\end{equation}	 
		\begin{equation}\label{3.3}
			-\widetilde{\kappa_2}\leq H(\widetilde{\omega}) \leq -\widetilde{\kappa_1} <0 \qquad \text{ on } M\backslash K,
		\end{equation}
		\begin{equation}\label{3.4}
			\sup_{p \in M}|\widetilde{\nabla}^q \widetilde{R_m}|\leq C_q \qquad \qquad \text{ on } M,
		\end{equation}
		where $\widetilde{\nabla}^q$ denotes the $q$-th order covariant derivative of $\widetilde{R_m}$ with respect to $\widetilde{\omega}$, and the positive constants $C=C(n)$, $\widetilde{\kappa_j}=\widetilde{\kappa_j}(n,\kappa_1,\kappa_2)$, $j = 1,2$, $C_q = C_q(n, q, \kappa_1, \kappa_2)$ depend only on the parameters in their parentheses.
	\end{lemm}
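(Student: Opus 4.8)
The plan is to prove Lemma~\ref{lem:Wang-Xiong Shi} by Shi's trick: run the K\"ahler--Ricci flow starting from $\omega$ for a short time and declare $\widetilde\omega:=\omega(T_0)$ for a suitably small $T_0>0$. First I note that $\omega$ automatically has bounded curvature: on the compact set $K$ the curvature tensor is bounded by a constant depending on $\omega$ and $K$, while on $M\setminus K$ the polarization identity for the K\"ahler curvature tensor bounds the full tensor by a dimensional multiple of $\sup|H(\omega)|\le\kappa_2$; hence $\sup_M|R_m(\omega)|^2\le k_0$ for some $k_0$. Shi's short-time existence theorem \cite{ShiWan-Xiong97} then yields a smooth solution $\omega(t)>0$ of the flow~\eqref{A.1} on $M\times[0,T]$ satisfying the uniform bound~\eqref{A.2}, with $T$ and the constant there depending only on $n$ and the initial curvature bound; since the flow preserves the K\"ahler condition, $\widetilde\omega=\omega(T_0)$ is again K\"ahler for any $T_0\in(0,T]$. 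With this choice, properties~\eqref{3.2} and~\eqref{3.4} are the standard outputs of the flow: integrating $\partial_t g_{\alpha\overline\beta}=-4R_{\alpha\overline\beta}$ over $[0,T_0]$ and using $|\operatorname{Ric}|\le C(n)\sqrt{k_0}$ gives $e^{-C(n)\sqrt{k_0}T_0}\omega\le\widetilde\omega\le e^{C(n)\sqrt{k_0}T_0}\omega$, which is~\eqref{3.2} once $T_0\lesssim k_0^{-1/2}$; and~\eqref{3.4} is exactly Shi's interior derivative estimate evaluated at the fixed positive time $T_0$.

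The substance of the lemma is the persistence of the pinching~\eqref{3.3}, and this is precisely what Lemma~\ref{Max-Principle} is built to deliver. I would apply it with the tensor $W_{\alpha\overline\beta\gamma\overline\delta}(x,t):=R_{\alpha\overline\beta\gamma\overline\delta}(x,t)$, the curvature tensor of $\omega(t)$. Under the K\"ahler--Ricci flow $R_{\alpha\overline\beta\gamma\overline\delta}$ satisfies a reaction--diffusion equation whose reaction term $Q$ is quadratic in the curvature, so that $Q_{\alpha\overline\beta\gamma\overline\delta}\eta^\alpha\overline\eta^\beta\eta^\gamma\overline\eta^\delta\le C(n)k_0|\eta|^4_{\omega(x,t)}$ and~\eqref{A.3} holds with $C_1=C(n)k_0$. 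For this $W$ the function $h(x,t)=\max\{R(\eta,\overline\eta,\eta,\overline\eta):|\eta|_{\omega(x,t)}=1\}$ is the pointwise maximal holomorphic sectional curvature of $\omega(t)$; it obeys~\eqref{A.4} with $C_0=C(n)\sqrt{k_0}$, and at $t=0$ satisfies $h(x,0)\le-\kappa_1$ on $M\setminus K$ by~\eqref{3.1}, which is~\eqref{A.5} with $\kappa=\kappa_1$. Lemma~\ref{Max-Principle} then gives $h(x,t)\le(8C_0\sqrt{nk_0}+C_1)t-\kappa_1$ on $M\setminus K$; shrinking $T_0$ so that the coefficient times $T_0$ is at most $\kappa_1/2$ produces $H(\widetilde\omega)\le-\widetilde\kappa_1$ on $M\setminus K$ with $\widetilde\kappa_1=\kappa_1/2$.

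For the lower bound in~\eqref{3.3} I would repeat the argument with $\widehat W:=-R_m$, so that $\widehat h(x,t)=-\min\{R(\eta,\overline\eta,\eta,\overline\eta):|\eta|_{\omega(x,t)}=1\}$; at $t=0$ the hypothesis $H(\omega)\ge-\kappa_2$ gives $\widehat h(x,0)\le\kappa_2$ on $M\setminus K$, i.e.~\eqref{A.5} with the (negative) constant $\kappa=-\kappa_2$, which Lemma~\ref{Max-Principle} permits. Its conclusion $\widehat h(x,t)\le(8C_0\sqrt{nk_0}+C_1)t+\kappa_2$ then yields $H(\widetilde\omega)\ge-\widetilde\kappa_2$ with $\widetilde\kappa_2=2\kappa_2$ after one more shrinking of $T_0$; altogether one takes $T_0=\min\{c_0k_0^{-1/2},\,\kappa_1(C(n)k_0)^{-1},\,\kappa_2(C(n)k_0)^{-1},\,T\}$, which only makes~\eqref{3.2} better, and the asserted constants emerge depending only on $n$, $\kappa_1$, $\kappa_2$ (and, through $k_0$, on the curvature of $\omega$ near $K$).

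I expect the main obstacle to be bookkeeping rather than a new idea: one must pin down the exact reaction term of the curvature evolution under the normalization of~\eqref{A.1} and under the Laplacian $\Delta$ used in Lemma~\ref{Max-Principle} in order to secure $C_1=C(n)k_0$ (any mismatch between the rough Laplacian in the curvature evolution and this $\Delta$ is itself a curvature-squared term and is absorbed into $C_1$), and one must check that the competing smallness requirements on $T_0$ can all be met simultaneously while keeping~\eqref{3.2} uniform. The genuinely new content over the global statement (Wu--Yau, Lemma 13) is the localization to $M\setminus K$, but that has already been isolated in Lemma~\ref{Max-Principle}, whose contradiction argument handles the boundary $\partial K$ through the continuity of $h(\cdot,0)$ together with~\eqref{A.5}; so the proof here is essentially this application plus the elementary time-shrinking above.
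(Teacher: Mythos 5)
Your proposal is correct and follows essentially the same route as the paper: Shi's short-time existence for the K\"ahler--Ricci flow supplies \eqref{3.2} and \eqref{3.4}, and the localized maximum principle of Lemma~\ref{Max-Principle} applied to the evolving curvature tensor (with the quadratic reaction term absorbed into $C_1$) preserves the negative upper bound on $M\backslash K$, exactly as in the paper's argument. The only cosmetic differences are that the paper obtains the lower bound in \eqref{3.3} directly from the uniform curvature bound along the flow rather than by a second maximum-principle argument with $-R_m$, and that you make explicit (via polarization) the initial curvature bound that the paper absorbs into constants depending on $K$ --- a dependence on $K$ which, as you correctly note, is present in both proofs even though the statement's constants omit it.
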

	The conditions \eqref{3.2} and \eqref{3.4} appear in \cite{ShiWan-Xiong97,WuDaminYauShingTung20}. We provide below details for the pinching estimate. 
	\begin{proof}
		From the short time existence of the K\"ahler--Ricci flow \cite{ShiWan-Xiong97}, the equation \eqref{A.1} admits a smooth solution $\left\{g_{\alpha \overline{\beta}}(x,t) \right\} $ for all $0\leq t \leq T$. The curvature $R_m(x,t)$ satisfies 
		\begin{equation}\label{3.5}
			\sup_{x\in M}|\nabla^{q}R_m(x,t)|^2\leq \frac{C(q,n,K)(\kappa_2-\kappa_1)^2}{t^q}, \qquad 0<t\leq \frac{\theta_0(n,K)}{\kappa_2-\kappa_1}\equiv T,
		\end{equation} for each nonnegative integer $q$,
		where $C(q,n,k)>0$ is a constant depending only on $q$, $K$ and $n$, and $\theta_0(n,K)>0$ is a constant depending only on $n$ and $K$.  
		
		From the evolution equation of the curvature tensor (see  \cite{ShiWan-Xiong97,WuDaminYauShingTung20}), we have
		\begin{align*}
			\frac{\partial}{\partial t}R_{\alpha \overline{\beta}\gamma \overline{\delta}}&=4\triangle R_{\alpha \overline{\beta}\gamma \overline{\delta}}+4g^{\mu \overline{\nu}}g^{\rho \overline{\tau}}(R_{\alpha \overline{\beta}\mu \overline{\tau}}R_{\gamma \overline{\delta}\rho \overline{\nu}}+R_{\alpha \overline{\delta}\mu \overline{\tau}}R_{\gamma \overline{\beta}\rho \overline{\nu}}-R_{\alpha \overline{\nu}\gamma \overline{\tau}}R_{\mu \overline{\beta}\rho \overline{\delta}})\\
			&-2g^{\mu \overline{\nu}}(R_{\alpha \overline{\nu}}R_{\mu \overline{\beta}\rho \overline{\tau}}+R_{\mu \overline{\beta}}R_{\alpha \overline{\nu}\rho \overline{\tau}}+R_{\gamma \overline{\nu}}R_{\alpha \overline{\beta}\mu \overline{\tau}}+R_{\mu \overline{\delta}}R_{\alpha \overline{\beta}\rho \overline{\nu}}),
		\end{align*}
		where $\triangle \equiv \triangle_{\omega(x,t)}=\frac{1}{2}g^{\alpha \overline{\beta}}(x,t)(\nabla_{\overline{\beta}}\nabla_{\alpha}+\nabla_{\alpha}\nabla_{\overline{\beta}})$. It follows that 
		\begin{align}\label{3.7}
			&(\frac{\partial}{\partial t}R_{\alpha \overline{\beta}\gamma \overline{\delta}})\eta^{\alpha}\overline{\eta}^{\beta}\eta^{\gamma}\overline{\eta}^{\delta}\\
			&\leq 4(\triangle R_{\alpha \overline{\beta}\gamma \overline{\delta}})\eta^{\alpha}\overline{\eta}^{\beta}\eta^{\gamma}\overline{\eta}^{\delta}+C_1(n)|\eta|^4_{g_{\alpha \overline{\beta}}}(x,t)|R_m(x,t)|^2_{\omega(x,t)}\\ \nonumber
			&\leq 4(\triangle R_{\alpha \overline{\beta}\gamma \overline{\delta}})\eta^{\alpha}\overline{\eta}^{\beta}\eta^{\gamma}\overline{\eta}^{\delta}+\widetilde{C_1}(n,K)(\kappa_2-\kappa_1)^2|\eta|^4_{\omega(x,t)}, \nonumber
		\end{align}
		by \eqref{3.5} with $q=0$. Let 
		\begin{equation*}
			H(x,\eta,t)=\frac{R_{\alpha \overline{\beta}\gamma \overline{\delta}})\eta^{\alpha}\overline{\eta}^{\beta}\eta^{\gamma}\overline{\eta}^{\delta}}{|\eta|^4_{\omega(x,t)}}.
		\end{equation*}
		Then by \eqref{3.1} and \eqref{3.5}, 
		\begin{equation*}
			H(\widetilde{\omega}) \leq -\widetilde{\kappa_1} <0 \text{ on } M\backslash K,
		\end{equation*}
		\begin{equation*}
			|H(x,\eta,t)|\leq |R_m(x,t)|_{\omega(x,t)}\leq C_0(n,K)(\kappa_2-\kappa_1).
		\end{equation*}
		To apply the maximum principle, let us denote
		\begin{equation*}
			h(x,t)=\max\left\{H(x,\eta,t); |\eta|_{\omega(x,t)=1} \right\},
		\end{equation*}
		for all $x\in M$ and $0\leq t \leq \frac{\theta(n,K)}{\kappa_2-\kappa_1}$. Then $h$ with \eqref{3.7} satisfies the three conditions in Lemma~\ref{Max-Principle}. Then 
		\begin{equation*}
			H(x,\eta,t)\leq h(x,t)\leq -\frac{\kappa_1}{2}<0,
		\end{equation*}
		for all $0<t\leq t_0:=\min\left\{\frac{\kappa_1}{2\widetilde{C_1}(n,K)(\kappa_2-\kappa_1)^2},\frac{\theta_0(n,K)}{\kappa_2-\kappa_1} \right\}$. 
		Since the curvature tensor is bounded by \eqref{3.5} with $q=0$, the complete K\"ahler metric $\omega(x,t)=\frac{\sqrt{-1}}{2}g_{\alpha \overline{\beta}}(x,t) dz^{\alpha}\wedge d\overline{z}^{\beta}$ is a desired metric for an arbitrary $t\in (0,t_0]$. 
	\end{proof}

	\section{Generation of K\"ahler metrics  with negative \\ holomorphic sectional curvature}
	
	In this section, after establishing a proposition below, we prove Theorem \ref{thm:comparison}.
	
	\begin{prop}\label{prop:negative_curvature}
		Given an $n$-dimensional K\"ahler manifold $(M,\omega)$, assume that there exists a compact subset $K$ in $M$ such that the holomorphic sectional curvature of $\omega$ is negative outside of $K$, and $M$ is biholomorphically  and properly embedded into $B_{N}, N\geq n$, where $B_{N}$ is the unit ball in $\mathbb{C}^N$. Then there exists a complete K\"ahler metric $\widetilde{\omega}$ whose holomorphic sectional curvature has a negative upper bound and $\widetilde{\omega}\geq \omega$. 
	\end{prop}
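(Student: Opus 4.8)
The plan is to take for $\widetilde\omega$ the given metric together with a large multiple of the metric that $M$ inherits from the ambient ball, and to bound the holomorphic sectional curvature of this sum by a subadditivity estimate. Write $\iota\colon M\hookrightarrow B_N$ for the biholomorphic embedding, and let $\omega_{B_N}$ be the Bergman metric of $B_N$, which up to a positive scalar is its complete K\"ahler--Einstein metric and has constant holomorphic sectional curvature $-c<0$. Put $\sigma:=\iota^{*}\omega_{B_N}$; since $\iota$ is an immersion, $\sigma$ is a positive‑definite K\"ahler metric on $M$. For a constant $\Lambda\ge 1$ to be fixed only at the end, set $\widetilde\omega:=\omega+\Lambda\,\sigma$. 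Then $\widetilde\omega$ is K\"ahler, $\widetilde\omega\ge\omega$ by construction, and since $\widetilde\omega\ge\omega$ and $\widetilde\omega\ge\sigma$ the metric $\widetilde\omega$ is complete as soon as $\omega$ is complete or $\iota$ is proper (the latter being the case in Theorem~\ref{thm:comparison}\,C); so the only point left to prove is the negative upper bound for $H(\widetilde\omega)$.

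Two ingredients enter. First, since $(M,\sigma)$ is a complex submanifold of $(B_N,\omega_{B_N})$ carrying the induced metric, the Gauss equation shows that holomorphic sectional curvature does not increase under passage to a complex submanifold, so $H(\sigma)(\xi)\le H(\omega_{B_N})(d\iota\,\xi)=-c$ for every $\sigma$‑unit $\xi\in T'M$. Second, for any two K\"ahler metrics $g_1,g_2$ on a complex manifold and any $(1,0)$‑vector $\xi$ at a point,
\[
R^{\,g_1+g_2}(\xi,\bar\xi,\xi,\bar\xi)\;\le\;R^{\,g_1}(\xi,\bar\xi,\xi,\bar\xi)+R^{\,g_2}(\xi,\bar\xi,\xi,\bar\xi).
\]
I would prove this by choosing coordinates at the point that put $g_1+g_2$ into K\"ahler normal form and simultaneously diagonalize $g_1$ (hence $g_2$), expanding all three curvature tensors through $R_{i\overline{j}k\overline{l}}=-\partial_k\partial_{\overline{l}}g_{i\overline{j}}+g^{p\overline{q}}\partial_k g_{i\overline{q}}\partial_{\overline{l}}g_{p\overline{j}}$, observing that the second‑order terms coincide, and checking that the surviving first‑order cross terms are nonpositive by the arithmetic--geometric mean inequality.

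Combining these with $g_1=\omega$, $g_2=\Lambda\sigma$: for a $\widetilde\omega$‑unit vector $\xi$ at $x\in M$ write $s:=|\xi|_{\omega}^{2}\in[0,1]$, so that $\Lambda|\xi|_{\sigma}^{2}=1-s$, and use $R^{\,\Lambda\sigma}=\Lambda R^{\,\sigma}$ to get
\[
H(\widetilde\omega)(\xi)\;\le\; s^{2}\,H(\omega)\!\Big(\tfrac{\xi}{|\xi|_{\omega}}\Big)+\frac{(1-s)^{2}}{\Lambda}\,H(\sigma)\!\Big(\tfrac{\xi}{|\xi|_{\sigma}}\Big)\;\le\; s^{2}\,H(\omega)\!\Big(\tfrac{\xi}{|\xi|_{\omega}}\Big)-\frac{c}{\Lambda}\,(1-s)^{2}.
\]
On $M\setminus K$ the hypothesis gives $H(\omega)\le-\kappa_1<0$ there, so the right side is at most $-\kappa_1 s^{2}-\tfrac{c}{\Lambda}(1-s)^{2}\le-\tfrac12\min(\kappa_1,c/\Lambda)$, using $s^{2}+(1-s)^{2}\ge\tfrac12$. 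On $K$, compactness furnishes constants $C_K,\delta_K>0$ with $H(\omega)\le C_K$ and $\sigma\ge\delta_K\,\omega$ there, hence $s\le(1+\Lambda\delta_K)^{-1}$ and the right side is at most $(C_K-c\Lambda\delta_K^{2})(1+\Lambda\delta_K)^{-2}$; fixing $\Lambda$ large in terms of $C_K,\delta_K,c$ makes this negative, and since $H(\widetilde\omega)$ is continuous on the compact unit $\widetilde\omega$‑sphere bundle over $K$ one concludes $\sup_K H(\widetilde\omega)<0$. Taking $-\kappa$ to be the smaller of the two resulting bounds gives $H(\widetilde\omega)\le-\kappa<0$ on all of $M$.

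The heart of the matter — and the step that genuinely requires the embedding into $B_N$ — is the control over the compact set $K$, where $H(\omega)$ need not be negative: its possibly positive values must be swamped by the strictly negative curvature that $\iota^{*}\omega_{B_N}$ contributes, which forces the coefficient $\Lambda$ to be large and makes the subadditivity inequality essential so that the good sign on $M\setminus K$ is not destroyed by adding $\Lambda\sigma$. The two technical points I expect to need care are therefore (i) the simultaneous‑normalization computation behind the subadditivity of the holomorphic sectional curvature of a sum, and (ii) reading ``$H$ negative outside $K$'' as a genuine negative upper bound $\sup_{M\setminus K}H(\omega)\le-\kappa_1<0$ there, which is the natural interpretation consistent with the bounded‑curvature hypothesis on $\omega_B$ in Theorem~\ref{thm:comparison}\,C and with the hypotheses of Lemma~\ref{lem:Wang-Xiong Shi}.
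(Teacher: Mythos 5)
Your proposal is correct and follows essentially the same route as the paper: the paper also sets $\omega_m := m\,\omega_{P}+\omega$ with $\omega_{P}$ the (pulled-back) Poincar\'e metric of $B_N$, uses the curvature-decreasing property of complex submanifolds to make the second summand uniformly negatively curved, and applies the convexity/subadditivity of the holomorphic sectional curvature under sums of metrics to take $m$ large enough to dominate the possibly positive curvature on $K$. The only real difference is that the paper justifies the subadditivity by reducing to Gaussian curvature on embedded Riemann surfaces via Lemma 4 of \cite{WU73} together with Kobayashi's one-variable inequality, whereas you prove the $n$-dimensional inequality directly in normal coordinates; your explicit handling of the uniform negative bound on $M\setminus K$, the compactness argument on $K$, and completeness is if anything more careful than the paper's.
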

	\begin{proof}
		
		From the holomorphic embedding $M \hookrightarrow B_{N}$, consider a K\"ahler metric of the form \[\omega_m:=m \omega_{P}+\omega, \quad m>0,\] where $\omega_{P}$ is the Poincar\'e metric of the unit ball $B_{N}$ in $\mathbb{C}^N$. It is clear that $\omega_m\geq \omega$ for each $m>0$. From the decreasing property of the holomorphic sectional curvature, $\omega_{P}$ restricted to $M$ has a negative holomorphic sectional curvature \cite{WU73}. From Lemma 4 of \cite{WU73}, we may assume that the holomorphic sectional curvature of $\omega_m$ is the Gaussian curvature on some embedded Riemann surfaces in $M$. Recall that for a Hermitian metric $G$ on a Riemann surface, the holomorphic sectional curvature of $G$ is the Gaussian curvature $H(g)=-\frac{1}{g}\frac{\partial^2 \log g}{\partial z \overline{\partial}z}$ of $G$ for some positive smooth function $g=g(z,\overline{z})$. In this case, the holomorphic sectional curvature $H(G,t)$ becomes a real-valued function independent of the unit vector $t$. Thus we write $H(G)$ instead of $H(G,t)$. 
		
		From \cite[Proposition 3.1]{KobayashiShoshichi05}, for any positive functions $f$ and $g$ with $m>0$, 
		\begin{align*}
			H(f+mg)&\leq \frac{f^2}{(f+mg)^2}H(f)+\frac{m^2 g^2}{(f+mg)^2}H(mg)\\
			&=\frac{f^2}{(f+mg)^2}H(f)+\frac{m g^2}{(f+mg)^2}H(g).
		\end{align*}
		From here, we can deduce that $H(\omega_m)$ becomes negative on $K$ by taking sufficiently large $m$. Since $H(\omega_m)$ is negative on $M \backslash K$, we are done. 
	\end{proof}
	
	\begin{proof}[Proof of Theorem~\ref{thm:comparison}]
		
		For the first statement, we fix a fundamental domain $\widetilde{M}$ and define a function $f : M \rightarrow \mathbb{C}$ by $f(z):=\frac{B(z)}{\omega^n_{B}(z)}$. Since the numerator and the denominator are smooth $(n,n)$-forms, the function $f$ is well-defined and clearly smooth. Note that the Bergman kernel and the Bergman metric are invariant under the automorphism group of $M$. Thus the boundedness assumption of $f$ on $\widetilde{M}$ implies the boundedness of $f$ on $M$, and we have a function $f$ which is smooth and bounded on $M$ satisfying $\Ric_{i\overline{j}} + g_{i\overline{j}}=f_{i\overline{j}}$ for each $i,j$, where we denote the Bergman metric in local coordinates by $(g_{i\overline{j}})$. Now we apply the main theorem in \cite{ChauAlbert04}, and the conclusion follows. 
		
		The first part of the second statement follows from Lemma~\ref{Kobayashi-upper-bound}, Lemma~\ref{quasi-bounded-Kobayashi} and Proposition~\ref{prop:negative_curvature} with the fact that for each $m>0$, 
		\begin{equation*}
			\omega_{B}\leq \widetilde{\omega},
		\end{equation*}
		where $\widetilde{\omega}$ is defined in Proposition~\ref{prop:negative_curvature}. 
		For the second part of the case $N=n$, the metric $ \widetilde{\omega}$ has the bounded curvature. Then one can solve the complex Monge--Ampere equation by following Wu--Yau's approach (see Lemma 31 and Theorem 3 in \cite{WuDaminYauShingTung20}). 
	\end{proof}
	
	\begin{rema}
		When $N>n$, the holomorphic sectional curvature $\widetilde{\omega}$ does not need to be bounded below because of the presence of the second fundamental form (see \cite{WU73}). 
	\end{rema}
	
	\begin{rema} \label{rem-details}
		If ~\eqref{3.1} is replaced by 
		\begin{equation*}
			-\kappa_2\leq H(\omega) \leq -\kappa_1  \text{ on } M \quad \text{ for } \kappa_1\in \mathbb{R},
		\end{equation*}
		then ~\eqref{3.2} and ~\eqref{3.4} still follow from the original Shi's argument. Combining it with Lemma~\ref{quasi-bounded-Kobayashi} and Lemma~\ref{lem:Wang-Xiong Shi}, we obtain a proof of the statement in Remark \ref{rmk:comparison-Kobayashi}. Indeed, by applying Shi's estimate on K\"ahler--Ricci flow with the short-time existence, we can generate a complete K\"ahler metric $\omega$ such that any order of covariant derivatives of the curvature tensor is bounded, and $\omega$ is equivalent to the Bergman metric $\omega_{B}$. Then by the characterization of quasi-bounded geometry of Wu--Yau \cite{WuDaminYauShingTung20}, $\omega$ admits a quasi-bounded geometry, and the statement in Remark~\ref{rmk:comparison-Kobayashi} follows from Lemma~\ref{quasi-bounded-Kobayashi}. 
	\end{rema}
	
	\section{Domain $E_{p,\lambda}$}
	
	In this section, we consider the domain \[ E_{p,\lambda}=\{(x,y,z)\in \mathbb{C}^3 ; (|x|^{2p}+|y|^2)^{1/{\lambda}}+|z|^2<1  \},\qquad p,\lambda>0,\] and perform necessary computations to examine the comparison of invariant metrics through verification of the hypotheses in Theorem~\ref{thm:comparison}. 
	
	First, we take a suitable compact set $K \subset E_{p,\lambda}\cup \partial E_{p,\lambda}$ that satisfies the conditions in Theorem \ref{thm:comparison}. Since any point $(x,y,z)\in \mathbb{C}^3$ can be realized as
	\begin{equation*}
		|x|<r(z,y)=\left((1-|z|^2)^{\lambda}-|y|^2 \right)^{\frac{1}{2p}},
	\end{equation*} 
	with a fixed pair $(y,z)$, the point $(x,y,z)$ can be mapped biholomorphically onto the form $(0,y,z)$ through the automorphism of one-dimensional disc with the radius $r(y,z)$ centered at the origin. Then using rotations, we can make the other two entries to have non-negative real-values. Since all these transformations are automorphisms of $E_{p,\lambda}$, we take the compact set: 
	\begin{align*}
		K_1&=\overline{\{(0,y,z) \in E_{p,\lambda}; 0\leq x,y<1\}},
	\end{align*}
	where the closure is taken with respect to the usual topology of $\mathbb{C}^3$. 
	
	An explicit formula of Bergman kernel $B$ on $E_{p,\lambda}$ is computed in \cite{BT15}:
	\begin{align}\label{eq:Bergman_kernel}
		B((x,y,z),\overline{(x,y,z)})&=\frac{\left((1-\nu_3)^{\lambda}-\nu_2 \right)^{\frac{1}{p}-3}{\nu_1}^2(p-1)(\lambda(p-1)+p) }{(1-\nu_3)^{2-2\lambda}\pi^3 p^2\left(\nu_1-((1-\nu_3)^{\lambda}-\nu_2)^{1/p} \right)^4 }\\\nonumber
		&+\frac{(1-\nu_3)^{\lambda-2}\left((1-\nu_3)^{\lambda}-\nu_2 \right)^{\frac{1}{p}-3}{\nu_1}^2(p-1)(\lambda-1)\nu_2p }{\pi^3 p^2\left(\nu_1-((1-\nu_3)^{\lambda}-\nu_2)^{1/p} \right)^4 }\\\nonumber
		&+\frac{\left((1-\nu_3)^{\lambda}-\nu_2 \right)^{\frac{3}{p}-3}(p+1)\left((1-\nu_3)^{\lambda}(\lambda+\lambda p +p)+(\lambda-1)\nu_2 p \right) }{(1-\nu_3)^{2-\lambda}\pi^3 p^2\left(\nu_1-((1-\nu_3)^{\lambda}-\nu_2)^{1/p} \right)^4 }\\\nonumber
		&-\frac{\left((1-\nu_3)^{\lambda}-\nu_2 \right)^{\frac{2}{p}-3}2\nu_1\left((1-\nu_3)^{\lambda}(\lambda(p^2-2)+p^2)+(\lambda-1)\nu_2 p^2 \right) }{(1-\nu_3)^{2-\lambda}\pi^3 p^2\left(\nu_1-((1-\nu_3)^{\lambda}-\nu_2)^{1/p} \right)^4 },\nonumber
	\end{align}
	where we set $\nu_1:=x\overline{x}$, $\nu_2:=y\overline{y}$ and $\nu_3:=z\overline{z}$.
	
	We write
	\[ a=1-\nu_3, \quad b= (1-\nu_3)^\lambda - \nu_2, \quad c= ((1-\nu_3)^\lambda - \nu_2)^{1/p}-\nu_1 . \]
	Then
	\begin{align} B &=\frac{b^{\frac{1}{p}-3}{\nu_1}^2(p-1)(\lambda(p-1)+p) }{a^{2-2\lambda}\pi^3 p^2 c^4 }
		+\frac{a^{\lambda-2} b^{\frac{1}{p}-3}{\nu_1}^2(p-1)(\lambda-1)\nu_2p }{\pi^3 p^2 c^4 }\\\nonumber
		&+\frac{b^{\frac{3}{p}-3}(p+1)\left(a^{\lambda}(\lambda+\lambda p +p)+(\lambda-1)\nu_2 p \right) }{a^{2-\lambda}\pi^3 p^2 c^4 }-\frac{b^{\frac{2}{p}-3}2\nu_1\left(a^{\lambda}(\lambda(p^2-2)+p^2)+(\lambda-1)\nu_2 p^2 \right) }{a^{2-\lambda}\pi^3 p^2 c^4 }.\nonumber
	\end{align}
	
	Write $D=a^{2} c^4$ and 
	\begin{align*} N&= {a^{2 \lambda} b^{\frac{1}{p}-3}{\nu_1}^2(p-1)(\lambda(p-1)+p) }
		+{a^{\lambda} b^{\frac{1}{p}-3}{\nu_1}^2(p-1)(\lambda-1)\nu_2p } \\
		& +{a^\lambda b^{\frac{3}{p}-3}(p+1)\left(a^{\lambda}(\lambda+\lambda p +p)+(\lambda-1)\nu_2 p \right) }\\ &-{a^\lambda b^{\frac{2}{p}-3}2\nu_1\left(a^{\lambda}(\lambda(p^2-2)+p^2)+(\lambda-1)\nu_2 p^2 \right) }. \end{align*}
	Then \begin{equation} \label{bfr} B= \frac N {\pi^3 p^2 D}. \end{equation}

	Write
	\begin{align*}  N_1 &= a^{2 \lambda} b^{\frac 1 p -3} \nu_1^2,  &  N_2 &= a^{\lambda} b^{\frac 1 p -3} \nu_1^2 \nu_2, &  N_3&  = a^{2 \lambda} b^{\frac 3 p -3},\\  N_4 &= a^{\lambda} b^{\frac 3 p -3} \nu_2, & N_5& = a^{2 \lambda} b^{\frac 2 p -3} \nu_1,&   N_6 &= a^{\lambda} b^{\frac 2 p -3} \nu_1\nu_2 , \\ u_1 &= (p-1)( \lambda (p-1) +p), & u_2 &= p(p-1)(\lambda -1), & u_3 &= (p+1)(\lambda + \lambda p +p), \\ u_4 &= p(p+1)(\lambda -1) , & u_5 &= -2 ( \lambda (p^2-2) + p^2) , & u_6 &= -2 (\lambda -1) p^2. \end{align*}
	Then
	\[  N= \sum_{i=1}^6 u_i N_i. \]
	Note that we have
	\[ u_1+u_3+u_5=6 \lambda \quad \text{ and } \quad u_2+u_4+u_6=0 .\]
	
	From the description of the Bergman kernel, we can check the pseudoconvexity of $E_{p,\lambda}$ for each $p,\lambda>0$. 
	
	\begin{prop}
		$E_{p,\lambda}$ is a pseudoconvex domain for each $p,\lambda>0$.
	\end{prop}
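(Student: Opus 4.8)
The plan is to prove pseudoconvexity of $E_{p,\lambda}$ directly from its defining function, without using the Bergman kernel formula at all — the remark preceding the proposition that "from the description of the Bergman kernel, we can check the pseudoconvexity" suggests the authors may route through the kernel, but the cleanest route is via the standard fact that a domain $\{\rho<0\}$ with $\rho$ a smooth plurisubharmonic exhaustion-type function is pseudoconvex, or more simply that a domain of the form $\{(w,z): |z|^2 < \phi(w)\}$ (a Hartogs-type domain) is pseudoconvex precisely when $-\log\phi$ is plurisubharmonic on $\{\phi>0\}$. Here, rewriting the defining inequality, $E_{p,\lambda} = \{(x,y,z) : |z|^2 < (1 - (|x|^{2p}+|y|^2)^{1/\lambda})^{\; } \}$ is not quite of that clean form because the right-hand side must also be positive; instead I would use the sublevel-set criterion: $E_{p,\lambda}=\{\rho<1\}$ where $\rho(x,y,z) = (|x|^{2p}+|y|^2)^{1/\lambda} + |z|^2$, and it suffices to show $\rho$ is plurisubharmonic (indeed continuous plurisubharmonic suffices; smoothness away from $x=0$ and a limiting argument handle the locus $x=0$ when $p<1$).

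First I would reduce to showing that $g(x,y) := (|x|^{2p} + |y|^2)^{1/\lambda}$ is plurisubharmonic on $\mathbb{C}^2$, since $|z|^2$ is plurisubharmonic and sums of psh functions are psh. Then I would factor this through two observations: (i) $h(x,y) := |x|^{2p} + |y|^2$ is plurisubharmonic on $\mathbb{C}^2$ — for $p \geq 1$ this is clear since $|x|^{2p} = (|x|^2)^p$ is psh as a convex increasing function of the psh function $|x|^2$, while $|y|^2$ is psh; for $0 < p < 1$, $|x|^{2p}$ is still psh on $\mathbb C$ because $\log|x|^{2p} = p \log|x|^2$ is $p$ times a psh (harmonic) function hence psh, so $|x|^{2p} = \exp(p\log|x|)^2$... more carefully, $|x|^{2p}$ is psh iff $\log|x|^{2p}$... actually the clean statement is that $|x|^{2p}$ is subharmonic on $\mathbb{C}$ for every $p>0$ since it is a radial function $r \mapsto r^{2p}$ that is convex in $\log r$ — indeed $\frac{d^2}{dt^2} e^{2pt} = 4p^2 e^{2pt} > 0$ — and a radial function on $\mathbb C$ is subharmonic iff it is convex as a function of $\log r$; and (ii) the map $t \mapsto t^{1/\lambda}$ is increasing and, composed appropriately, the composition $(\text{psh}) \circ (\text{increasing convex})$ is psh. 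But $t\mapsto t^{1/\lambda}$ is convex only for $\lambda \leq 1$, so for $\lambda > 1$ I cannot simply compose. The correct fix is to work with the logarithm: show $\log h(x,y) = \log(|x|^{2p}+|y|^2)$ is plurisubharmonic, and then note $g = \exp(\tfrac1\lambda \log h)$ is psh because $\exp$ is convex increasing and $\tfrac1\lambda\log h$ is psh. So the real lemma to establish is: \emph{$\log(|x|^{2p}+|y|^2)$ is plurisubharmonic on $\mathbb{C}^2$}.

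To prove that lemma I would write $|x|^{2p}+|y|^2 = e^{2p\,\varphi(x)} + e^{2\psi(y)}$ where $\varphi(x)=\log|x|$ and $\psi(y)=\log|y|$ are (pluri)harmonic off the coordinate hyperplanes, and use the general principle that $\log(e^{u_1}+e^{u_2})$ is psh whenever $u_1,u_2$ are psh (this is a standard fact: the function $(s,t)\mapsto\log(e^s+e^t)$ is convex and nondecreasing in each variable, so it preserves plurisubharmonicity under composition with $(u_1,u_2)$). Taking $u_1 = 2p\log|x|$ and $u_2 = 2\log|y|$, both psh (harmonic) on their natural domains, gives that $\log(|x|^{2p}+|y|^2)$ is psh off $\{xy=0\}$, and since it is continuous (indeed locally bounded) across $\{xy=0\}$ it extends as a psh function there by the removable singularity theorem for psh functions across pluripolar sets. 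Then $\rho = \exp(\tfrac1\lambda\log(|x|^{2p}+|y|^2)) + |z|^2$ is continuous psh on all of $\mathbb{C}^3$, so $E_{p,\lambda}=\{\rho<1\}$ is a sublevel set of a continuous psh function and hence pseudoconvex.

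The main obstacle, and the point requiring the most care, is the behavior at $x=0$ (and the interplay with the exponent $p$): for $0<p<1$ the function $|x|^{2p}$ is not smooth at $x=0$ and $\rho$ fails to be $C^2$ there, so the naive Levi-form computation does not apply on that locus. The remedy — invoking that a continuous psh function on $\mathbb{C}^n$ has pseudoconvex sublevel sets, together with the removable-singularity extension across the thin set $\{xy=0\}$ — is standard but must be stated explicitly. An alternative, if one prefers a kernel-based proof to match the paper's phrasing, is to note that $E_{p,\lambda}$ admits a bounded-above-and-below Bergman kernel expression \eqref{eq:Bergman_kernel} that blows up at the boundary (the denominator $c^4 = (((1-\nu_3)^\lambda-\nu_2)^{1/p}-\nu_1)^4 \to 0$ as one approaches $\partial E_{p,\lambda}$), hence $E_{p,\lambda}$ is a domain of holomorphy by the Bergman-kernel boundary-blowup criterion, hence pseudoconvex; but this requires justifying the blowup uniformly along all boundary sequences, which is itself a short computation. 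I would present the psh-exhaustion argument as the primary proof.
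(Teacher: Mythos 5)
Your proof is correct and follows essentially the same route as the paper's: both reduce to showing $(|x|^{2p}+|y|^2)^{1/\lambda}$ is plurisubharmonic by writing its logarithm as $\tfrac1\lambda\log(e^{\psi_1}+e^{\psi_2})$ with $\psi_1=2p\log|x|$, $\psi_2=2\log|y|$, invoking the standard fact that $\log(e^{\psi_1}+e^{\psi_2})$ is psh for psh $\psi_i$, and then exponentiating. Your added attention to the non-smooth locus $\{xy=0\}$ (via continuity and the removable-singularity extension) is a point the paper's proof passes over silently, and is a welcome refinement rather than a departure.
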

	\begin{proof}\!\!\footnote{This proof is suggested by an anonymous referee and replaces our original proof. We are grateful to the referee.}
		To show that $u=u_{p,\lambda}:=\left(|x|^{2p}+|y|^2 \right)^{\frac{1}{\lambda}}+|z|^2$ is a (bounded) plurisubharmonic exhaustion function of $E_{p,\lambda}$, it suffices to show that $v=v_{p,\lambda}:=\left(|x|^{2p}+|y|^2 \right)^{\frac{1}{\lambda}}$ is plurisubharmonic. To this end, consider 
		\begin{equation*}
			\log v = \frac{1}{\lambda}\log \left(e^{\psi_1}+e^{\psi_2} \right), \qquad \text{ where } \psi_1:=2p\log |x| \quad \text{ and } \quad  \psi_2:=2\log |y|.
		\end{equation*}
		Now the plurisubharmonicity of $\log v$ follows from the fact that $\log \left(e^{\psi_1}+e^{\psi_2} \right)$ is always plurisubharmonic whenever $\psi_1$ and $\psi_2$ are plurisubharmonic, since we have
		\begin{align*}
			&\frac{\partial^2}{\partial z \partial \overline{z} }\log \left(e^{\psi_1}+e^{\psi_2} \right)\\
			&=
			\frac{1}{\left(e^{\psi_1}+e^{\psi_2} \right)^2}\left(e^{\psi_1+\psi_2}\left(\frac{\partial \psi_1}{\partial z}-\frac{\partial \psi_2}{\partial z} \right)^2+e^{\psi_1}\frac{\partial^2 \psi_1}{\partial z \partial \overline{z}}+e^{\psi_2}\frac{\partial^2 \psi_2}{\partial z \partial \overline{z}}   \right)\geq 0.
		\end{align*}
		From the plurisubharmonicity of $\log v$ it follows that $v=e^{\log v}$ is plurisubharmonic, as desired. 
	\end{proof}

	We are interested in behaviours of the metric and curvature components on the compact set  
	$K_1=\overline{\{(0,y,z) \in E_{\lambda,p}; 0\leq y,z<1\}}$. In what follows, we compute those components.
	
	Recall the formula for the components of the Bergman metric \[ g_{i\overline j} = \frac{\partial^2 \log B}{\partial z_i \partial \overline{z_j}} , \quad i,j =1,2,3, \]
	where we set $(z_1,z_2,z_3)=(x,y,z)$. For $i=1,2,3$, we write
	\[ \partial_i= \frac{ \partial} {\partial z_i} \quad \text{ and } \quad \overline {\partial}_i= \frac{ \partial} {\partial \overline{z_i}}. \]
	
	\begin{prop}\label{prop:metric} Each component of the Bergman metric $g_{i\overline{j}}$ at $(0,y,z) \in E_{p,\lambda}, 0\leq y,z<1$, is given as follows:
		\begin{align*}
			g_{1\overline{1}}&= \frac 1 c \cdot \frac{u_5  + u_6 \delta}{u_3 + u_4 \delta} + \frac 4 c, \\
			g_{2\overline{2}} &= \frac {a^\lambda}{b^2} \left (\frac 1 p +3 \right ) + \frac {a^\lambda}{b^2} \cdot \frac{u_3 u_4 (1 -\delta)^2}{(u_3 + u_4 \delta)^2} , \\ 
			g_{2\overline{3}} = g_{3\overline{2}} &=  \frac {\lambda y z}{a^{1-\lambda} b^2} \cdot \left ( \frac 1 p +3 \right ) + \frac {\lambda y z}{a^{1-\lambda} b^2} \cdot \frac { u_3 u_4 (1-\delta)^2 }{ (u_3 + u_4 \delta)^2}  ,\\
			g_{3\overline{3}}& = \frac{1+\delta(\lambda z^2-1)}{ a^{2-2\lambda} b^2} \cdot \frac \lambda p+  \frac {\delta^2(2-2\lambda) + \delta(2\lambda^2 z^2 -4)+\lambda+2}{a^{2-2\lambda} b^2}\\&+ \frac {\lambda \delta}{a^{2-2\lambda} b^2} \cdot \frac {u_3 u_4 (1+\delta^2) (1+\lambda z^2) + u_4^2 \delta (1+(\lambda z^2-1) \delta +\delta^2) +u_3^2 (1+\lambda z^2)}{(u_3+u_4 \delta)^2},
			\\
			g_{i\overline{j}}& =0  \ \text{  otherwise,}
		\end{align*}
		where we write $\delta:= y^2/a^\lambda=y^2/(1-z^2)^\lambda$.\end{prop}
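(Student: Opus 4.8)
The plan is to compute $\log B$ at a general point and then specialize all second derivatives to the slice $x=0$, i.e.\ to $\nu_1 = 0$. First I would observe that at $\nu_1 = 0$ we have $c = b^{1/p}$, so the denominator $D = a^2 c^4 = a^2 b^{4/p}$ becomes a clean power of $a$ and $b$, and among the six numerator terms $N_i$ only those \emph{without} a factor of $\nu_1$ survive when we set $\nu_1 = 0$ at the end; namely $N_3 = a^{2\lambda} b^{3/p-3}$ and $N_4 = a^\lambda b^{3/p-3}\nu_2$. Thus $N\big|_{\nu_1=0} = u_3 N_3 + u_4 N_4 = a^\lambda b^{3/p-3}(u_3 a^\lambda + u_4 \nu_2)$, and writing $\delta = \nu_2/a^\lambda$ we get $B\big|_{\nu_1=0} = \tfrac{1}{\pi^3 p^2}\, a^{2\lambda} b^{-1/p-3}(u_3 + u_4\delta)$, up to the obvious bookkeeping. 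This identifies the ``$u_3 + u_4\delta$'' combination that pervades the stated formulas.

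\textbf{Key steps.} The computation splits naturally according to which variables the two derivatives $\partial_i, \overline\partial_j$ hit, because $\nu_1 = x\overline x$, $\nu_2 = y \overline y$, $\nu_3 = z\overline z$ are the only combinations appearing. (i) For $g_{1\overline 1} = \partial_1 \overline\partial_1 \log B$: since $B$ depends on $x$ only through $\nu_1$, one has $\partial_1 \overline\partial_1 \log B\big|_{\nu_1=0} = \big(\partial_{\nu_1}\log B\big)\big|_{\nu_1=0}$, so I only need the \emph{first} derivative of $\log B$ in $\nu_1$ evaluated at $0$. Here the full $N$ matters but only linearly in $\nu_1$; the $c^{-4}$ factor contributes the ``$4/c$'' term (from $\partial_{\nu_1} c = -\tfrac1p b^{1/p -1}$ evaluated suitably, giving $4 c^{-1}$ after simplification), and the linear-in-$\nu_1$ part of $N$ together with the constant part $u_3 N_3 + u_4 N_4$ produces the ratio $\tfrac1c\cdot\tfrac{u_5 + u_6\delta}{u_3 + u_4\delta}$ — note $N_5, N_6$ are exactly the $\nu_1^1$-terms. (ii) For the block in $y, z$: since both $\nu_2$ and $\nu_3$ are functions of $y,z$ and $B\big|_{\nu_1 = 0}$ depends on them through $a = 1-\nu_3$ and $b = a^\lambda - \nu_2$, I would set $\phi := \log B\big|_{\nu_1=0} = \mathrm{const} + 2\lambda \log a - (\tfrac1p + 3)\log b + \log(u_3 + u_4\delta)$ and compute $\partial_2\overline\partial_2 \phi$, $\partial_2 \overline\partial_3\phi$, $\partial_3\overline\partial_3\phi$ by the chain rule, using $\partial_2 b = -\overline y$, $\partial_3 a = -\overline z$, $\partial_3 b = -\lambda a^{\lambda-1}\overline z$, and $\delta = \nu_2 a^{-\lambda}$, so $\partial_2 \delta = \overline y a^{-\lambda}$, $\partial_3\delta = \lambda \nu_2 a^{-\lambda-1}\overline z = \lambda \delta a^{-1}\overline z$. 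The three pieces of $\phi$ contribute the three groups of terms visible in each formula: the $\log b$ piece gives the $(\tfrac1p + 3)$ and $\tfrac\lambda p$ coefficients (after combining with $\log a$), the $\log(u_3 + u_4\delta)$ piece gives the rational expressions in $u_3, u_4, \delta$, and cross terms from differentiating $a^\lambda$ inside $\delta$ and inside $b$ give the remaining polynomial-in-$\delta$ corrections. (iii) Finally I would record that mixed derivatives $g_{1\overline 2}, g_{1\overline 3}$ vanish at $\nu_1 = 0$: any such derivative carries an odd power of $\overline x$ or $x$ (equivalently, $\partial_1$ of anything is a multiple of $\overline x$), hence vanishes when $x = 0$.

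\textbf{Main obstacle.} The genuinely delicate step is (ii): organizing the chain-rule bookkeeping for $\partial_3 \overline\partial_3 \log B$ so that the $a^{\lambda}$-dependence buried simultaneously in $b = a^\lambda - \nu_2$ and in $\delta = \nu_2 a^{-\lambda}$ is tracked consistently, since $\partial_3$ and $\overline\partial_3$ each produce several terms and their product expands into many monomials in $\delta$ that must be collected into the compact numerator $u_3 u_4(1+\delta^2)(1+\lambda z^2) + u_4^2\delta(1 + (\lambda z^2 -1)\delta + \delta^2) + u_3^2(1+\lambda z^2)$ over $(u_3 + u_4\delta)^2$, plus the separate polynomial remainder $\delta^2(2-2\lambda) + \delta(2\lambda^2 z^2 - 4) + \lambda + 2$. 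I would handle this by computing $\partial_3\phi$ once in closed form, then $\overline\partial_3$ of that, grouping terms by their denominator ($1$, $b^{-1}$-type, and $(u_3+u_4\delta)^{-1}$ and $(u_3+u_4\delta)^{-2}$), and only at the end substituting $\nu_2 = \delta a^\lambda$ and $\nu_3 = z^2$ to reach the stated form; the $g_{2\overline 2}$ and $g_{2\overline 3}$ cases are strictly easier (no $a$-dependence through the $\nu_2$-derivative) and serve as a consistency check. Everything reduces to elementary but lengthy differentiation, so beyond step (ii) there is no conceptual difficulty.
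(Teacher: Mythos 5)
Your overall strategy --- differentiate $\log B=\log N-\log D-\log(\pi^3p^2)$ directly and evaluate at $x=0$, exploiting that only $N_3,N_4$ survive the restriction $\nu_1=0$ while only $N_5,N_6$ contribute to $\partial_{\nu_1}N|_{\nu_1=0}$ --- is exactly the paper's proof, which computes $\frac{\partial_i\overline\partial_j N}{N}-\frac{\partial_i\overline\partial_j D}{D}$ termwise and then evaluates at $(0,y,z)$; your preliminary restriction to the slice for the $(y,z)$-block is a legitimate shortcut, since $\partial_2,\partial_3,\overline\partial_2,\overline\partial_3$ commute with setting $x=0$. There is, however, a concrete error in the restricted potential you commit to. From $D=a^2c^4$ and $c|_{\nu_1=0}=b^{1/p}$ you correctly fold $-4\log c=-\tfrac4p\log b$ into the coefficient $-(\tfrac1p+3)$ of $\log b$, but you drop the $-2\log a$ coming from the factor $a^2$ in $D$. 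The correct slice potential is
\[
\phi=\mathrm{const}+(2\lambda-2)\log a-\left(\tfrac1p+3\right)\log b+\log(u_3+u_4\delta),
\]
not $\mathrm{const}+2\lambda\log a-\cdots$. Since $\partial_3\overline\partial_3\log a=-1/a^2$, your version of $\phi$ yields a $g_{3\overline{3}}$ that is too small by $2/a^2$: for instance at $y=0$ (so $\delta=0$, $b=a^\lambda$) the stated formula gives $g_{3\overline{3}}=(\lambda/p+\lambda+2)/a^2$, whereas your $\phi$ gives $(\lambda/p+\lambda)/a^2$. The slip does not affect $g_{1\overline{1}}$, $g_{2\overline{2}}$, $g_{2\overline{3}}$, or the vanishing of the mixed components, all of which your plan handles correctly (for the $(2,3)$-block this is because $\partial_2\log a=0$).

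A second, harmless slip: to produce the $4/c$ term you invoke $\partial_{\nu_1}c=-\tfrac1p b^{1/p-1}$, but $c=b^{1/p}-\nu_1$ with $b$ independent of $\nu_1$, so $\partial_{\nu_1}c=-1$ and $-\partial_{\nu_1}\log(a^2c^4)\big|_{\nu_1=0}=4/c$, which is the value you state. Once the coefficient of $\log a$ in $\phi$ is corrected to $2\lambda-2$, the remainder of your plan (chain rule in $a$, $b$, $\delta$ and grouping by denominators $1$, $(u_3+u_4\delta)^{-1}$, $(u_3+u_4\delta)^{-2}$) reproduces the proposition by the same direct computation the paper carries out.
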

	
	\begin{proof}
		All the formulas for $g_{i \overline j}$ are obtained from direct computations.
		For example, since  
		\begin{align*}
			\op_1 D&=-4 a^2 c^3x, &
			\op_1 N_1 &= 2 a^{2 \lambda} b^{\frac 1 p -3} \nu_1 x,& 
			\op_1 N_2 &= 2 a^{\lambda} b^{\frac 1 p -3} \nu_1 x \nu_2 , \\
			\op_1 N_3 &=0 ,\qquad \op_1 N_4 =0 ,& 
			\op_1 N_5 &=  a^{2 \lambda} b^{\frac 2 p -3}  x,& 
			\op_1 N_6 &= a^{\lambda} b^{\frac 2 p -3}  x \nu_2  ,
		\end{align*}
		and
		\begin{align*}
			\p_1 \op_1 D&=-4 a^2 c^3 + 12 a^2 c^2 \nu_1, & 
			\p_1 \op_1 N_1 &= 4 a^{2 \lambda} b^{\frac 1 p -3} \nu_1,& 
			\p_1 \op_1 N_2 &= 4 a^{\lambda} b^{\frac 1 p -3} \nu_1 \nu_2 , \\
			\p_1 \op_1 N_3 &=0, \quad
			\p_1 \op_1 N_4 =0,& 
			\p_1 \op_1 N_5 &= a^{2 \lambda} b^{\frac 2 p -3} ,& 
			\p_1 \op_1 N_6 &= a^{\lambda} b^{\frac 2 p -3} \nu_2 , 
		\end{align*}
		we have
		\begin{align*} g_{1 \overline{1}} &= \frac{N (\partial_1 \overline{\partial}_1 N) - (\partial_1 N)( \overline{\partial}_1 N)}{N^2} - \frac{D (\partial_1 \overline{\partial}_1 D) - (\partial_1 D)( \overline{\partial}_1 D)}{D^2} \\ & \rsa \frac{\partial_1 \overline{\partial}_1 N}{N} - \frac{\partial_1 \overline{\partial}_1 D}{D} = \frac {u_5 a^{2\lambda}b^{\frac 2 p -3} + u_6 a^\lambda b^{\frac 2 p -3} y^2}{u_3 a^{2 \lambda} b^{\frac 3 p -3} +u_4 a^{\lambda} b^{\frac 3 p -3}y^2} + \frac{4 a^2 c^3}{a^2 c^4} \\ &\phantom{\rsa} =\frac 1 c \cdot \frac{u_5  + u_6 \delta}{u_3 + u_4 \delta} + \frac 4 c,  
		\end{align*}
		where we use $c=b^{\frac 1 p}$ at $(0,y,z)$. 
		
		The other $g_{i \overline{j}}$ can be computed similarly, and we omit the details.
	\end{proof}
	
	\begin{rema}
		When $(0,y,z)$ approaches the boundary of $K_1$, we find that the limits of the metric components and those of curvature components cannot be determined. However, using $\delta$ introduced in the above proposition, we will be able to control the limit behaviors. 
	\end{rema}
	
	Write 
	\begin{equation} \label{g-com} g_{1 \overline 1} = \frac 1 c \cdot A_{1}, \quad g_{2 \overline 2} = \frac {a^\lambda}{b^2} \cdot A_{2}, \quad
		g_{2 \overline 3} = \frac { \lambda yz }{a^{1-\lambda} b^2}\cdot A_2, \quad g_{3 \overline 3} = \frac 1 {a^{2-2\lambda} b^2} \cdot A_3, \end{equation}
	where 
	\begin{align*}
		A_1 & = \frac{u_5  + u_6 \delta}{u_3 + u_4 \delta} +  4 ,\qquad
		A_2  = \frac 1 p +3 + \frac{u_3 u_4 (1 -\delta)^2}{(u_3 + u_4 \delta)^2},\\
		A_3 & =  (1+\delta(\lambda z^2-1)) \cdot \frac \lambda p+ \delta^2(2-2\lambda) + \delta(2\lambda^2 z^2 -4)+\lambda+2\\&+ \lambda \delta \cdot \frac {u_3 u_4 (1+\delta^2) (1+\lambda z^2) + u_4^2 \delta (1+(\lambda z^2-1) \delta +\delta^2) +u_3^2 (1+\lambda z^2)}{(u_3+u_4 \delta)^2}.
	\end{align*}
	Then
	\begin{equation} \label{eq-det} g_{2 \overline 2} g_{3 \overline 3} - g_{2 \overline 3} g_{3 \overline 2} = \frac 1 {a^{2-3\lambda} b^4} {A_2 (A_3 - \lambda^2 \delta  z^2 A_2)}  = \frac {1-\delta} {a^{2-3\lambda} b^4} \cdot A_2 A_4= \frac{A_2A_4}{a^{2-2\lambda}b^3}, \end{equation} where we put $A_4:= (A_3-\lambda^2 \delta z^2 A_2)/(1-\delta)$ and use $1-\delta= b/a^{\lambda}$.
	More explicitly, we have
	\[A_4=\frac{ \delta ^2 p^2
		(r-2) (r-1)+\delta  p (r-1) (4 pr+4p+3 r)+p^2 r^2+3 p^2 r+2 p^2+2
		p r^2+3 p r+r^2}{p( \delta 
		p (r-1)+p r+p+r)}.\]
	Note that $0 \le \delta <1$. Furthermore, as $(0,y,z) \in E_{p,\lambda}$ approaches the boundary, we have $\delta \rightarrow 1^-$. One sees that \begin{equation} \label{limA} \lim_{\delta \rightarrow 1^-} A_1 =\frac{4(2+p)}{1+2p}, \qquad \lim_{\delta \rightarrow 1^-} A_2 =3+ \frac 1 p 
		\quad \text{ and } \quad  \lim_{\delta \rightarrow 1^-}A_4=\lambda \left ( 3 +\frac 1 p \right ) .\end{equation}
	
	\begin{lemma}\label{lem:det/kernel}
		At $(0,y,z) \in E_{p,\lambda}$, $0\le y,z <1$, the ratio $\displaystyle{\frac{\det g_B}B}$ is bounded.
	\end{lemma}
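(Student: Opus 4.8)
The plan is to evaluate both $\det g_B$ and $B$ explicitly at a point $(0,y,z)$ and to check that, once every power of $a$ and $b$ has cancelled, the quotient becomes a rational expression in the single parameter $\delta$ (with $z$ entering only through the bounded quantity $\lambda z^2$), which is then manifestly bounded on $K_1$.

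First I would use the block structure of the Bergman metric recorded in Proposition~\ref{prop:metric}: at $(0,y,z)$ one has $g_{1\overline 2}=g_{1\overline 3}=g_{2\overline 1}=g_{3\overline 1}=0$, so $\det g_B = g_{1\overline 1}\bigl(g_{2\overline 2}g_{3\overline 3}-g_{2\overline 3}g_{3\overline 2}\bigr)$. Inserting the forms from \eqref{g-com} and \eqref{eq-det} and using $c=b^{1/p}$ at $(0,y,z)$ gives
\[ \det g_B \;=\; \frac{A_1}{c}\cdot\frac{A_2 A_4}{a^{2-2\lambda}b^{3}} \;=\; \frac{A_1 A_2 A_4}{a^{2-2\lambda}\,b^{3+1/p}} . \]
Next I would evaluate $B$ from \eqref{bfr} at the same point. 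Since $\nu_1=x\overline x=0$, the terms $N_1,N_2,N_5,N_6$ all vanish, while $N_3=a^{2\lambda}b^{\frac 3p-3}$ and $N_4=a^{\lambda}b^{\frac3p-3}y^2=a^{2\lambda}b^{\frac3p-3}\delta$, and $D=a^2c^4=a^2b^{4/p}$. Hence
\[ B \;=\; \frac{u_3 N_3 + u_4 N_4}{\pi^3 p^2 D} \;=\; \frac{a^{2\lambda}b^{\frac3p-3}(u_3+u_4\delta)}{\pi^3 p^2\, a^2 b^{4/p}} \;=\; \frac{u_3+u_4\delta}{\pi^3 p^2\, a^{2-2\lambda}\,b^{3+1/p}} . \]

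Dividing the two displays, all powers of $a$ and of $b$ cancel and one is left with
\[ \frac{\det g_B}{B} \;=\; \frac{\pi^3 p^2\, A_1 A_2 A_4}{u_3+u_4\delta} . \]
It then remains to bound the right-hand side on $K_1$. The numerator is a rational function of $\delta\in[0,1)$ and of $r=\lambda z^2\in[0,\lambda)$ (with $A_1,A_2,A_4$ as defined after \eqref{g-com}), with no poles in that range; the denominator $u_3+u_4\delta$ is affine in $\delta$, equal to $u_3=(p+1)(\lambda+\lambda p+p)>0$ at $\delta=0$ and to $u_3+u_4=\lambda(p+1)(1+2p)>0$ at $\delta=1$, hence strictly positive on all of $[0,1]$. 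Therefore $\det g_B/B$ extends to a continuous function of $(\delta,z)$ on the compact set $[0,1]\times[0,1]$, so it is bounded on $K_1$; the limits in \eqref{limA} even yield its explicit value as $\delta\to1^-$.

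The only place requiring genuine care — and the ``obstacle'' such as it is — is the exponent bookkeeping in the two displays: one must be sure that the powers $a^{2-2\lambda}$ and $b^{3+1/p}$ appearing in $\det g_B$ are \emph{exactly} those appearing in $B$, since it is precisely this total cancellation that prevents any blow-up of the ratio as $(0,y,z)$ approaches $\partial K_1$, where $a\to 0$ and/or $b\to 0$. A minor secondary point is the positivity of $u_3+u_4\delta$ on $[0,1]$, which, by affineness, follows at once from its two positive endpoint values computed above.
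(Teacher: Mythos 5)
Your computation is correct and follows essentially the same route as the paper: evaluate $\det g_B$ via the block structure and \eqref{eq-det}, evaluate $B$ at $(0,y,z)$ where only the $N_3,N_4$ terms survive, and observe that all powers of $a$, $b$, $c$ cancel to leave $\pi^3p^2A_1A_2A_4/(u_3+u_4\delta)$, exactly the paper's final expression. The only difference is that you make explicit the positivity of the affine denominator $u_3+u_4\delta$ on $[0,1]$ (and the boundedness of $A_1A_2A_4$), which the paper leaves implicit with ``which is bounded.''
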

	
	\begin{proof}
		From \eqref{bfr}, \eqref{g-com} and \eqref{eq-det}, we obtain \begin{align*}
			\frac{\det g_B}{B}&= \frac{ \frac 1 c A_1 \frac{A_2A_4}{a^{2-2\lambda}b^3} } {\frac N {\pi^3 p^2 D}} = \frac{\pi^3 p^2 A_1 A_2 A_4 a^2 c^4}{c a^{2-2\lambda}b^3 \cdot  a^\lambda b^{\frac 3 p -3}(p+1) \left ( a^\lambda (\lambda + \lambda p + p) + (\lambda -1)y^2 p \right )} \\ &  = \frac{\pi^3 p^2 A_1 A_2 A_4}{(p+1) \left (  (\lambda + \lambda p + p) + (\lambda -1) p \delta \right )},
		\end{align*}
		which is bounded.
	\end{proof}

	\begin{prop}\label{prop:inverse-metric} 
		The inverse metric of the Bergman metric $g_{i\overline{j}}$ at $(0,y,z)\in E_{p,\lambda}$, $0\leq y,z<1$, are given as follows:
		\begin{align*}
			g^{1\overline{1}}&= \frac c {A_1}, &
			g^{2\overline{2}}&=\frac{b^2} {a^\lambda} \cdot \frac { A_3}{(1-\delta) A_2 A_4}=\frac {bA_3}{A_2A_4},\\ 
			g^{2\overline{3}}=g^{3\overline 2} &= - \frac {\lambda yza^{1-2\lambda} b^2} {(1-\delta)A_4}=-\frac{\lambda y z a^{1-\lambda} b}{A_4}, &
			g^{3\overline{3}}&=\frac{a^{2-2\lambda} b^2}{(1-\delta)A_4}=\frac{a^{2-\lambda}b}{A_4} , \\ g^{i\overline{j}}& =0  \ \text{  otherwise.}
		\end{align*}	
	\end{prop}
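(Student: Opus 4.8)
The plan is to exploit the block structure of the Hermitian matrix $(g_{i\overline{j}})$ exhibited in Proposition~\ref{prop:metric}: at a point $(0,y,z)$ with $0\le y,z<1$ the only nonzero entries are $g_{1\overline{1}}$ together with $g_{2\overline{2}}$, $g_{2\overline{3}}=g_{3\overline{2}}$, $g_{3\overline{3}}$, so $(g_{i\overline{j}})$ is block diagonal, consisting of a $1\times 1$ block in the first slot and a $2\times 2$ block in the slots $2,3$. Its inverse is therefore block diagonal as well; in particular $g^{i\overline{j}}=0$ for all pairs outside these two blocks, which disposes of the last assertion, and it suffices to invert each block separately. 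For the $1\times 1$ block, \eqref{g-com} gives $g_{1\overline{1}}=A_1/c$, hence $g^{1\overline{1}}=c/A_1$, the asserted formula.

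For the $2\times 2$ block I would use the adjugate formula: setting $\Delta:=g_{2\overline{2}}g_{3\overline{3}}-g_{2\overline{3}}g_{3\overline{2}}$, one has $g^{2\overline{2}}=g_{3\overline{3}}/\Delta$, $g^{2\overline{3}}=g^{3\overline{2}}=-g_{2\overline{3}}/\Delta$, and $g^{3\overline{3}}=g_{2\overline{2}}/\Delta$. The determinant $\Delta$ has already been computed in \eqref{eq-det} as $\Delta=A_2A_4/(a^{2-2\lambda}b^3)$, and it is nonzero since $A_2>0$ and $A_4>0$ for $0\le\delta<1$ (visible from the explicit formula for $A_4$ together with \eqref{limA}), so the inverse is well defined. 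Substituting the expressions for $g_{2\overline{2}}$, $g_{2\overline{3}}$, $g_{3\overline{3}}$ from \eqref{g-com} and collecting the powers of $a$ and $b$ then yields the stated formulas; the two displayed forms given for each entry — one carrying a factor $1/(1-\delta)$, one not — are matched using the identity $1-\delta=b/a^\lambda$ recorded just before Lemma~\ref{lem:det/kernel}.

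I expect no genuine obstacle here: this is a two-by-two Hermitian inversion whose determinant is already in hand from \eqref{eq-det}, so the only real work is careful bookkeeping of the exponents of $a$ and $b$ and the consistent application of $1-\delta=b/a^\lambda$ to pass between the two normalizations of each entry. As with Proposition~\ref{prop:metric}, the verification is a direct computation, and the details can be omitted once the block reduction and the adjugate formula are in place.
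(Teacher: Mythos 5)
Your proposal is correct and coincides with the paper's own (very terse) proof: the paper likewise inverts the $3\times 3$ matrix by exploiting the block structure, using the $2\times 2$ determinant already computed in \eqref{eq-det} and the identity $1-\delta=b/a^\lambda$ to pass between the two normalizations. Substituting the entries from \eqref{g-com} into the adjugate formula reproduces each stated expression, so no further comment is needed.
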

	
	\begin{proof}
		The formulas are obtained by taking the inverse matrix of the $3 \times 3$ matrix $(g_{i \overline{j}})_{i,j=1,2,3}$ calculated in Proposition \ref{prop:metric}. In particular, the determinant of the $2 \times 2$ block $(g_{i \overline{j}})_{i,j=2,3}$ is computed in \eqref{eq-det}. Also recall $1-\delta= b/a^\lambda$.
	\end{proof}

	Through direct computations, we obtain the following for $(0,y,z) \in K_1$:
	\begin{table}[H]
		\fbox{\parbox{\textwidth}{
				\begin{align*}
					\p_1 g_{2 \overline 1} =\p_2 g_{1 \overline 1}  =  \op_1 g_{1 \overline 2} =\op_2 g_{1 \overline 1}  =
					& \frac y { bc } G_1, \\ 
					\p_1 g_{3 \overline 1} = \p_3 g_{1 \overline 1} = \op_1 g_{1 \overline 3} = \op_3 g_{1 \overline 1} =
					& \frac z {a^{1-\lambda} bc} G_2 , \\
					\p_2 g_{2 \overline 2} = \op_2 g_{2 \overline 2} =
					& \frac {ya^\lambda} { b^3} G_3,\\
					\p_2 g_{2 \overline 3} =  \op_2 g_{3 \overline 2} = & \frac { y^2 z}{a^{1-\lambda} b^3} G_4, \\
					\p_2 g_{3 \overline 2} = \p_3 g_{2 \overline 2} = \op_2 g_{2 \overline 3} = \op_3 g_{2 \overline 2} =& \frac { y^2 z}{a^{1-\lambda} b^3} G_5, \\
					\p_2 g_{3 \overline 3} = \p_3 g_{2 \overline 3} =  \op_2 g_{3 \overline 3} = \op_3 g_{3 \overline 2} = & \frac { y z^2}{a^{2-2\lambda} b^3} G_6, \\
					\p_3 g_{3 \overline 2} =  \op_3 g_{2 \overline 3} = & \frac { y z^2}{a^{2-2\lambda} b^3} G_7, \\
					\p_3 g_{3 \overline 3} = \op_3 g_{3 \overline 3} =& \frac { z}{a^{3-3 \lambda} b^3} G_8, \\
					\p_i g_{j \overline k} = \op_i g_{j \overline k} = &0 \  \text{ otherwise}.
				\end{align*}
		}}
		\caption{Formulas for $\p_i g_{j \overline k}$}
		\label{Gs}
	\end{table}
	Here $G_i$ are set to be the remaining factors after pulling out the factors involving $a,b,c,y,z$. Explicitly, we have 
	\begin{align*}
		G_1  =& \frac {4 }{p } - \frac {(u_5+u_6 \delta) ( (2p-3) u_4 \delta + 3(p-1)u_3 + p u_4)} {p (u_3+ \delta u_4)^2}  + \frac {2(p-1) u_6 \delta+ (3p-2)u_5 + p u_6} {p(u_3 +u_4 \delta)},\\ 
		G_2 = & \frac {4 \lambda }{ p } + \frac {\lambda }{p} \cdot \frac {u_5+ u_6 \delta}{u_3 + u_4 \delta} - \frac { \lambda  \delta (1-\delta)  ( u_4 u_5-u_3 u_6)}{(u_3 + u_4 \delta)^2} . 
	\end{align*}
	For simplicity, we do not present expressions for the other $G_i$'s. Since $u_3+u_4\delta >0$, one can see that $G_i$ are bounded for $i =1, 2, \dots , 8$ as $\delta \rightarrow 1^-$.  
	
	\begin{lemma}
		We have
		\[ G_4=\lambda G_3.\]
		If we define $F_1$ and $F_2$ by
		\[ F_1 :=  \frac{z^2}{1-\delta} \left (G_6- \lambda \delta G_5 \right ) \quad \text{ and } \quad  F_2:=\frac 1{1-\delta}\left ( G_8-\lambda \delta z^2 G_7 \right ) , \] then 
		\[  \lim_{\delta \rightarrow 1^-} F_1 = \lambda \left (3+\frac 1 p \right ) \quad \text{ and } \quad \lim_{\delta \rightarrow 1^-} F_2 = \frac {2\lambda^2 (1+3p)}p. \]
	\end{lemma}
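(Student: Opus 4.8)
The plan is to prove the two claimed identities $G_4 = \lambda G_3$ and the two limits $\lim_{\delta\to 1^-} F_1 = \lambda(3+\tfrac1p)$, $\lim_{\delta\to 1^-} F_2 = \tfrac{2\lambda^2(1+3p)}{p}$ by the same bookkeeping technique already used throughout this section: compute the relevant second derivatives $\p_i \op_j \log B = \p_i\op_j \log N - \p_i\op_j\log D$ at points $(0,y,z)$, extract the scalar factors $G_i$ after pulling out the powers of $a,b,c,y,z$ recorded in Table~\ref{Gs}, and then simplify using the substitution $\delta = y^2/a^\lambda$ together with the relation $1-\delta = b/a^\lambda$. Since the paper has already exhibited $G_1$ and $G_2$ explicitly and asserts all $G_i$ are bounded as $\delta\to1^-$, the remaining $G_i$ are rational functions of $\delta$ (with coefficients polynomial in $p,\lambda,z^2$) whose denominators are powers of $u_3+u_4\delta$, which is positive and has the finite nonzero limit $u_3+u_4 = (p+1)(\lambda+\lambda p+p) + p(p+1)(\lambda-1) = 2p(p+1)(1+\lambda)\neq 0$; this is what makes all the limits exist.

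First I would establish $G_4 = \lambda G_3$. This is a structural identity, not a numerical coincidence, and the cleanest route is to trace where the two quantities come from: $\p_2 g_{2\overline2}$ and $\p_2 g_{2\overline3}$ both arise from differentiating $\log B$ in the $z_2=y$ direction, and on the slice $x=0$ one has $g_{2\overline3} = \tfrac{\lambda yz}{a^{1-\lambda}b^2}A_2$ versus $g_{2\overline2} = \tfrac{a^\lambda}{b^2}A_2$ with the \emph{same} factor $A_2$ (see \eqref{g-com}). Concretely, $g_{2\overline 3}/g_{2\overline 2} = \lambda z a^{-1}\cdot(\text{function of }z\text{ alone, since }a=1-z^2)$, wait — more carefully, $g_{2\overline3} = \lambda z\, a^{\lambda-1}\cdot (a^{-\lambda}\cdot\text{no})$; the point is that the ratio $g_{2\overline 3}=g_{2\overline 2}\cdot \lambda z a^{-1}$ fails because $a$ depends on $z$ not $y$, so $\p_2$ (which is $\p/\p y$) passes through the $z$-dependent prefactor and acts only on the shared $A_2$-bearing part. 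Pulling out the prefactors recorded in Table~\ref{Gs} — namely $ya^\lambda/b^3$ for $\p_2 g_{2\overline2}$ and $y^2z/(a^{1-\lambda}b^3)$ for $\p_2 g_{2\overline3}$ — and comparing, the leftover scalar $G_4$ must equal $\lambda$ times the leftover scalar $G_3$. I would present this as a one-line consequence of $g_{2\overline 3} = \lambda z a^{\lambda-1} \cdot (a^{1-\lambda}/a^\lambda)\cdot(\text{...})$, i.e. by writing $g_{2\overline3}= \tfrac{\lambda z}{a}\cdot g_{2\overline2}\cdot\tfrac{a^{1-\lambda}\cdot a^\lambda}{a^{2-2\lambda}}$... the book-keeping of $a$-powers is the delicate part but it is purely formal.

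Next, for the limits of $F_1$ and $F_2$: these are exactly the $G$-analogues of the quantity $A_4 = (A_3 - \lambda^2\delta z^2 A_2)/(1-\delta)$ introduced after \eqref{eq-det}, whose limit $\lim_{\delta\to1^-}A_4 = \lambda(3+\tfrac1p)$ was computed in \eqref{limA}. The definitions $F_1 = \tfrac{z^2}{1-\delta}(G_6-\lambda\delta G_5)$ and $F_2 = \tfrac1{1-\delta}(G_8-\lambda\delta z^2 G_7)$ are designed so that the combination $G_6-\lambda\delta G_5$ (resp. $G_8 - \lambda\delta z^2 G_7$) vanishes to first order at $\delta=1$, so that dividing by $1-\delta$ and taking the limit gives a finite answer. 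Thus the computation reduces to: (a) write down $G_5,G_6,G_7,G_8$ as rational functions of $\delta$ (coefficients in $p,\lambda,z$) from the second-derivative formulas, exactly as $G_1,G_2$ were produced; (b) verify $(G_6-\lambda\delta G_5)\big|_{\delta=1}=0$ and $(G_8-\lambda\delta z^2 G_7)\big|_{\delta=1}=0$; (c) apply L'Hôpital (differentiate numerator in $\delta$, evaluate at $\delta=1$) or equivalently factor out $(1-\delta)$ explicitly, then evaluate, using $u_3+u_4 = 2p(p+1)(1+\lambda)$ and $z^2\to 1$ on the relevant boundary stratum. The main obstacle is purely the volume of algebra in step (a)–(c): obtaining the closed forms of $G_5,\dots,G_8$ from $\p_i\op_j\log N - \p_i\op_j\log D$ requires carefully differentiating the six-term numerator $N = \sum u_i N_i$ twice and evaluating at $x=0$, where many but not all terms drop out (the $N_3, N_4$ pieces have vanishing first derivatives in $z_1$, etc.). There is no conceptual difficulty — the cancellation of the $(1-\delta)$ factor is forced by the geometry (it is the same cancellation that produced $A_4$ from $A_3$) — so I would carry out the differentiations, record the rational expressions, and then let a short symbolic check confirm the vanishing at $\delta=1$ and read off the two limiting values.
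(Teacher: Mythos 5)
Your proposal takes essentially the same route as the paper, whose entire proof of this lemma is the single sentence that the identities are verified through direct computations with the help of a computer algebra system; your plan to compute $G_3,\dots,G_8$ explicitly from $\p_i\op_j\log B$ at $x=0$, pull out the prefactors of Table~\ref{Gs}, and check the identity $G_4=\lambda G_3$ and the two limits symbolically is exactly that computation. The half-finished ``structural'' derivation of $G_4=\lambda G_3$ (which would go through once one notes that the prefactor of $g_{2\overline 3}$ is really $\bar y z$, killed by $\p_2$) and the remark that ``$z^2\to 1$ on the relevant boundary stratum'' (a misstep, since $\delta\to 1^-$ does not force $z\to 1$) are inessential, as they do not affect the direct symbolic verification you ultimately fall back on.
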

	
	\begin{proof}
		We verify the identities through direct computations with help of a computer algebra system.  
	\end{proof}
	
	Similarly, we obtain
	\begin{table}[H]
		\fbox{\parbox{\textwidth}{
				\begin{align*}
					\p_1 \op_1 g_{1 \overline 1} =& \frac 1 {c^2} H_1  , \\
					\p_1 \op_1 g_{2 \overline 2}=\p_1 \op_2 g_{2 \overline 1}=\p_2 \op_1 g_{1 \overline 2}= \p_2 \op_2 g_{1 \overline 1} =&  \frac {a^\lambda} {b^2 c} H_2, \\
					\p_1 \op_1 g_{2 \overline 3}= \p_1 \op_3 g_{2 \overline 1}= \p_2 \op_1 g_{1 \overline 3} =\p_2 \op_3 g_{1 \overline 1} =&
					\p_1 \op_1 g_{3 \overline 2} =\p_1 \op_2 g_{3 \overline 1} =\p_3 \op_1 g_{1 \overline 2} =\p_3 \op_2 g_{1 \overline 1} = \frac{y z}{a^{1-\lambda} b^2 c} H_3 , \\
					\p_1 \op_1 g_{3 \overline 3} =\p_1 \op_3 g_{3 \overline 1} =\p_3 \op_1 g_{1 \overline 3} =\p_3 \op_3 g_{1 \overline 1} =&  \frac 1 { a^{2-2\lambda} b^2 c} H_4, \\
					\p_2 \op_2 g_{2 \overline 2} =& \frac{a^{2\lambda}}{b^4} H_5  , \\
					\p_2 \op_2 g_{2 \overline 3} =\p_2 \op_3 g_{2 \overline 2}=\p_2 \op_2 g_{3 \overline 2} =\p_3 \op_2 g_{2 \overline 2}   =& \frac { y z }{a^{1-2\lambda} b^4} H_6  , \\
					\p_2 \op_2 g_{3 \overline 3} =\p_2 \op_3 g_{3 \overline 2} =\p_3 \op_2 g_{2 \overline 3} =\p_3 \op_3 g_{2 \overline 2} =& \frac 1 {a^{2-3 \lambda} b^4} H_7 , \\
					\p_2 \op_3 g_{2 \overline 3} =\p_3 \op_2 g_{3 \overline 2} =& \frac{y^2 z^2}{a^{2-2\lambda} b^4} H_8 , \\
					\p_2 \op_3 g_{3 \overline 3} =\p_3 \op_3 g_{2 \overline 3} =\p_3 \op_2 g_{3 \overline 3} =\p_3 \op_3 g_{3 \overline 2} =& \frac {yz}{a^{3-3\lambda} b^4} H_9, \\
					\p_3 \op_3 g_{3 \overline 3} =&\frac 1 { a^{4-4\lambda} b^4} H_{10}  , \\
					\p_i \op_j g_{k \overline l} =& 0 \ \text{ otherwise}. 
				\end{align*}
		}}
		\caption{Formulas for $\p_i \op_j g_{k \overline l}$}
		\label{Hs}
	\end{table}
	Here $H_i$ are the remaining factors; in particular, we have \[ H_1 = 8 +   4 \cdot \frac {u_1+u_2 \delta}{u_3+u_4 \delta} -  2 \cdot \frac {(u_5+u_6 \delta)^2} { (u_3 + u_4 \delta)^2}.\]
	We do not present explicit expressions for the other $H_i$'s. Using $0 \le \delta <1$ and $u_3+u_4\delta >0$, one can check that $H_i$ are bounded for $i =1, 2, \dots , 10$ as $\delta \rightarrow 1^-$. 
	
	\begin{prop}\label{prop:curv_components} Each curvature components of the Bergman metric at $(0,y,z)\in E_{p, \lambda}$, $0\leq y,z< 1$, is given by
		\begin{align*}
			R_{1\overline{1}1\overline{1}}&=\frac 1 {c^2} {(-H_1)} = \frac 1 {c^2} \cdot \widetilde H_1,\\
			R_{1\overline{1}2\overline{2}}&=R_{2\overline{1}1\overline{2}}=R_{1\overline{2}2\overline{1}}=R_{2\overline{2}1\overline{1}}=\frac{a^\lambda} {b^2 c} \cdot  \left (  - {H_2}+ \frac{\delta{G_1}^2}{A_1} \right )=\frac{a^\lambda} {b^2 c} \cdot  \widetilde H_2 ,\\
			R_{1\overline{1}2\overline{3}}&=R_{1\overline{1}3\overline{2}}=R_{2\overline{1}1\overline{3}}=R_{1\overline{2}3\overline{1}}=R_{1\overline{3}2\overline{1}}=R_{2\overline{3}1\overline{1}}=R_{3\overline{1}1\overline{2}}=R_{3\overline{2}1\overline{1}}\\ &=\frac {yz a^{\lambda -1}} {b^2c} \cdot \left (  - H_3 + \frac {G_1 G_2}{A_1} \right )=\frac {yz a^{\lambda -1}} {b^2c} \cdot \widetilde H_3 ,\\
			R_{1\overline{1}3\overline{3}}&=R_{1\overline{3}3\overline{1}}=R_{3\overline{1}1\overline{3}}=R_{3\overline{3}1\overline{1}}=\frac { a^{2\lambda -2}} {b^2c} \cdot \left ( - H_4+ \frac {z^2 G_2^2}{A_1} \right ) =\frac { a^{2\lambda -2}} {b^2c} \cdot \widetilde H_4 ,\\
			R_{2\overline{2}2\overline{2}}&=\frac{a^{2\lambda}}{b^4} \cdot \left ( - H_5 + \frac{\delta  G_3^2}{A_2} \right )  =\frac{a^{2\lambda}}{b^4} \cdot \widetilde H_5  ,\\
			R_{2\overline{2}2\overline{3}}&=R_{2\overline{2}3\overline{2}}=R_{2\overline{3}2\overline{2}}=R_{3\overline{2}2\overline{2}}=\frac{yza^{2\lambda -1}}{b^4} \cdot \left ( - H_6 + \frac{\delta G_3G_5} {A_2 } \right ) =\frac{yza^{2\lambda -1}}{b^4} \cdot \widetilde H_6,\\
			R_{2\overline{2}3\overline{3}}&=R_{2\overline{3}3\overline{2}}=R_{3\overline{2}2\overline{3}}=R_{3\overline{3}2\overline{2}} \\&= \frac {a^{3\lambda -2}}{b^4} \cdot  \left ( - H_7+\frac{\delta^2 z^2 G_5^2}{A_2} + \frac{\delta (1-\delta) F_1^2}{A_4} \right )= \frac {a^{3\lambda -2}}{b^4} \cdot \widetilde H_7,\\
			R_{2\overline{3}2\overline{3}}&=R_{3\overline{2}3\overline{2}}= \frac {a^{2\lambda -2}y^2 z^2}{b^4} \cdot  \left ( - H_8 + \frac{G_3G_7}{A_2}\right )= \frac {a^{2\lambda -2}y^2 z^2}{b^4} \cdot \widetilde H_8,\\
			R_{2\overline{3}3\overline{3}}&=R_{3\overline{2}3\overline{3}}=R_{3\overline{3}2\overline{3}}=R_{3\overline{3}3\overline{2}}\\&= \frac {a^{3\lambda -3}y z}{b^4} \cdot  \left ( - H_9 + \frac{\delta z^2 G_5 G_7}{A_2} + \frac{(1-\delta) F_1F_2}{A_4} \right )= \frac {a^{3\lambda -3}y z}{b^4} \cdot \widetilde H_9,\\
			R_{3\overline{3}3\overline{3}}&=\frac {a^{4\lambda -4}}{b^4} \cdot  \left ( - H_{10} + \frac{\delta z^4 G_7^2}{A_2}+\frac {z^2 (1-\delta) F_2^2}{A_4} \right )=\frac {a^{4\lambda -4}}{b^4} \cdot \widetilde H_{10},\\
			R_{i\overline{j}k\overline{l}}&=0 \ \text{  otherwise,}
		\end{align*}
		where we define $\widetilde H_i$ for $i=1, 2, \cdots , 10$ for later use.
	\end{prop}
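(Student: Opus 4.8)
The plan is to expand the formula for the components of the Chern curvature tensor,
\[ R_{i\overline j k\overline l}= -\frac{\partial^2 g_{i\overline j}}{\partial z_k\,\partial\overline z_l}+\sum_{p,q=1}^3 g^{q\overline p}\,\frac{\partial g_{i\overline p}}{\partial z_k}\,\frac{\partial g_{q\overline j}}{\partial\overline z_l}, \]
at the point $(0,y,z)$, substituting the data already assembled: the metric from Proposition~\ref{prop:metric}, the inverse metric from Proposition~\ref{prop:inverse-metric}, the first derivatives $\partial_i g_{j\overline k}$ and $\overline\partial_i g_{j\overline k}$ from Table~\ref{Gs}, and the mixed second derivatives $\partial_i\overline\partial_j g_{k\overline l}$ from Table~\ref{Hs}. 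Since the curvature formula is an algebraic identity in $g$, $g^{-1}$, $\partial g$, $\overline\partial g$ and $\partial\overline\partial g$, evaluating each ingredient at $(0,y,z)$ gives $R_{i\overline j k\overline l}$ there. The first term $-\partial_k\overline\partial_l g_{i\overline j}$ is read off directly from Table~\ref{Hs} and supplies the $-H_i$ summand in each $\widetilde H_i$, so the real work lies in the contraction term and the index bookkeeping.

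First one exploits the sparsity at $x=0$. There $\nu_1=0$, so the matrix $(g_{i\overline j})$ is block-diagonal, with a $1\times1$ block in the $z_1$-direction and a $2\times2$ block in the $z_2,z_3$-directions; hence $g^{q\overline p}=0$ unless $(q,p)=(1,1)$ or $q,p\in\{2,3\}$. Moreover Table~\ref{Gs} shows that at $(0,y,z)$ the only nonvanishing first derivatives $\partial_k g_{i\overline j}$ are those for which either exactly one of $i,j$ equals $1$ and $k$ is the complementary index, or $i,j,k\in\{2,3\}$. Combining these two facts, the contraction collapses for each quadruple $(i,j,k,l)$: for a component with at least one index equal to $1$, at most the $(q,p)=(1,1)$ summand survives (and for $R_{1\overline 1 1\overline 1}$ even that vanishes, since $\partial_1 g_{1\overline p}=0$ for every $p$), and inserting $g^{1\overline 1}=c/A_1$, $\partial_2 g_{1\overline 1}=\tfrac{y}{bc}G_1$, $\partial_3 g_{1\overline 1}=\tfrac{z}{a^{1-\lambda}bc}G_2$ together with $y^2=\delta a^\lambda$ produces exactly the corrections $\delta G_1^2/A_1$, $G_1G_2/A_1$, $z^2G_2^2/A_1$; for a component with all indices in $\{2,3\}$ only the four summands with $q,p\in\{2,3\}$ survive. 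The coincidences among the various $R_{i\overline j k\overline l}$ asserted in the statement, and the vanishing of every component not listed, then follow from the K\"ahler symmetries $R_{i\overline j k\overline l}=R_{k\overline j i\overline l}=R_{i\overline l k\overline j}$ and Hermitian symmetry combined with the vanishing patterns of Tables~\ref{Gs} and~\ref{Hs}. So it suffices to treat one representative of each symmetry orbit, factor out the displayed monomial in $a,b,c,y,z$, and define the remainder to be $\widetilde H_i$.

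The main obstacle is the final simplification of the $z_2,z_3$-block components $\widetilde H_5,\dots,\widetilde H_{10}$. After inserting $g^{2\overline 2}=bA_3/(A_2A_4)$, $g^{2\overline 3}=g^{3\overline 2}=-\lambda yz\,a^{1-\lambda}b/A_4$ and $g^{3\overline 3}=a^{2-\lambda}b/A_4$, together with the factors $G_3,\dots,G_8$, into the four surviving summands, the powers of $a$ and $b$ match the asserted prefactor only after repeated use of $1-\delta=b/a^\lambda$, and the $A_4$-denominators do not individually reduce to the clean shape in the statement. The point is that the $A_4$-contributions of the four summands recombine --- via the identity $A_3=(1-\delta)A_4+\lambda^2\delta z^2 A_2$ (which is the definition of $A_4$), the relation $G_4=\lambda G_3$, and the definitions $F_1=\tfrac{z^2}{1-\delta}(G_6-\lambda\delta G_5)$ and $F_2=\tfrac{1}{1-\delta}(G_8-\lambda\delta z^2 G_7)$ --- into precisely the terms displayed: for $\widetilde H_5,\widetilde H_6,\widetilde H_8$ the $A_4$-parts cancel outright, leaving only the $A_2$-terms $\delta G_3^2/A_2$, $\delta G_3G_5/A_2$, $G_3G_7/A_2$; for $\widetilde H_7$ they fuse into $\delta(1-\delta)F_1^2/A_4$; and for $\widetilde H_9$ and $\widetilde H_{10}$ into $(1-\delta)F_1F_2/A_4$ and $z^2(1-\delta)F_2^2/A_4$ respectively. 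These recombinations, along with the verification that each monomial prefactor is exactly as claimed and that no unlisted component is nonzero, are best carried out with a computer algebra system, just as for the identity $G_4=\lambda G_3$ and the limits of $F_1$ and $F_2$ above; once the sparsity bookkeeping is in place, no conceptual difficulty remains.
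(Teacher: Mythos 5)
Your proposal is correct and follows essentially the same route as the paper: the paper's proof consists of substituting Tables~\ref{Gs} and~\ref{Hs} and Proposition~\ref{prop:inverse-metric} into the curvature formula, and your account of the sparsity bookkeeping at $x=0$ and of the recombination of the $A_4$-denominators via $A_3=(1-\delta)A_4+\lambda^2\delta z^2A_2$, $G_4=\lambda G_3$ and the definitions of $F_1,F_2$ is exactly what that substitution amounts to. (One minor slip in your description: the nonvanishing first derivatives carrying the index $1$ are those in which $1$ occurs \emph{twice} among the three indices --- e.g.\ $\partial_2 g_{1\overline 1}\neq 0$ has $i=j=1$ --- not ``exactly one of $i,j$ equal to $1$''; your subsequent computation nonetheless uses the correct table entries, so nothing is affected.)
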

	
	\begin{proof}
		Recall that the components of curvature tensor $R$ associated with $g$ is given by 
		\begin{equation*}\label{eq:curv}
			R_{i\overline{j}k\overline{l}}=-\partial_k \overline \partial_l g_{i\overline{j}}+ \sum_{p,q=1}^{3}g^{q\overline{p}}(\partial_k g_{i\overline{p}})( \overline \partial_l  g_{q\overline{j}}).
		\end{equation*}
		Thus the results follow from Tables \ref{Gs} and \ref{Hs} and Proposition \ref{prop:inverse-metric}. 
	\end{proof}
	
	\begin{lemma} \label{lem-tH}
		We have
		\[ \widetilde H_3 = \lambda \widetilde H_2,  \quad  \widetilde H_6 = \lambda \widetilde H_5, \quad  \widetilde H_8 = \lambda \widetilde H_6 \quad \text{and} \quad \widetilde H_9=2\lambda \widetilde H_7- \lambda^2 \delta z^2 \widetilde H_6. \]
		If we define
		\begin{align*}
			\widetilde F_1 &:=\frac 1 {1-\delta} \left (
			\widetilde H_4 - \lambda \delta z^2 \widetilde H_3 \right ),\qquad \widetilde F_2 :=\frac 1 {1-\delta} \left ( \widetilde H_7 - \lambda \delta z^2 \widetilde H_6 \right ) , \\
			\widetilde F_3 &= \frac 1 {(1-\delta)^2} \left (\widetilde H_{10} - 4 \lambda^2 \delta z^2 \widetilde H_7 + 3 \lambda^3 \delta^2 z^4 \widetilde H_6 \right ) , \end{align*}
		then 
		\begin{equation} \label{limF}  \lim_{\delta \rightarrow 1^-} \widetilde F_1 = - \frac{4 \lambda (2+p)}{p (1+2p)}, \quad \lim_{\delta \rightarrow 1^-} \widetilde F_2 =-\lambda \left (3+\frac 1 p\right )  \quad \text{and} \quad \lim_{\delta \rightarrow 1^-} \widetilde F_3= -2\lambda^2 \left ( 3+ \frac 1 p \right ).\end{equation} \end{lemma}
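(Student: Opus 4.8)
The plan is to establish the four algebraic identities among the $\widetilde H_i$ first, and then read off the three limits from them.

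\textbf{The identities.} By Proposition~\ref{prop:curv_components} each $\widetilde H_i$ equals $-H_i$ plus a ``curvature correction'' built from the $G_j$, the $A_k$, $\delta$ and $z$; the $H_i$ and $G_j$ are themselves rational functions of $\delta$ (with parameters $p,\lambda,z$) whose only denominators are powers of $p$ and of $u_3+u_4\delta$, and the $A_k$ are the rational functions displayed after \eqref{g-com}. Substituting these into $\widetilde H_3=\lambda\widetilde H_2$, $\widetilde H_6=\lambda\widetilde H_5$, $\widetilde H_8=\lambda\widetilde H_6$ and $\widetilde H_9=2\lambda\widetilde H_7-\lambda^2\delta z^2\widetilde H_6$ turns each into a rational identity in $\delta$; clearing denominators makes it a polynomial identity, which I would verify with a computer algebra system, exactly as in the preceding lemma that records $G_4=\lambda G_3$. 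Conceptually the factor $\lambda$ is forced by the chain rule on the slice $\{x=0\}$: since $b=a^\lambda-y^2$ with $a=1-z^2$, a $\overline\partial_3$-derivative of a metric component carries an extra factor $\lambda a^{\lambda-1}$ relative to the corresponding $\overline\partial_2$-derivative, which is already visible in $G_4=\lambda G_3$ and propagates through the defining formulas for the $\widetilde H_i$ and for $g^{i\overline j}$ in Proposition~\ref{prop:inverse-metric}.

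\textbf{The limits.} The combinations $\widetilde H_4-\lambda\delta z^2\widetilde H_3$, $\widetilde H_7-\lambda\delta z^2\widetilde H_6$ and $\widetilde H_{10}-4\lambda^2\delta z^2\widetilde H_7+3\lambda^3\delta^2 z^4\widetilde H_6$ defining $\widetilde F_1,\widetilde F_2,\widetilde F_3$ are chosen so that their leading behavior at $\delta=1$ cancels. Using the identities above and the explicit boundary values, I would first check that the first two numerators vanish to first order and the third to second order as $\delta\to1^-$, so that division by $1-\delta$, resp.\ $(1-\delta)^2$, leaves a finite limit. To evaluate it I would expand each $\widetilde H_i$ to the needed order in $\varepsilon:=1-\delta$ about $\varepsilon=0$ --- equivalently apply l'H\^opital once, and twice for $\widetilde F_3$ --- feeding in the finite boundary values of the $H_i$ and $G_j$ and their first $\varepsilon$-derivatives (finite by the boundedness noted after Tables~\ref{Gs} and~\ref{Hs}), the values $\lim_{\delta\to1^-}A_1=\tfrac{4(2+p)}{1+2p}$, $\lim_{\delta\to1^-}A_2=3+\tfrac1p$, $\lim_{\delta\to1^-}A_4=\lambda(3+\tfrac1p)$ from \eqref{limA}, and the values $\lim_{\delta\to1^-}F_1=\lambda(3+\tfrac1p)$, $\lim_{\delta\to1^-}F_2=\tfrac{2\lambda^2(1+3p)}{p}$ from the preceding lemma. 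Collecting the resulting expressions yields \eqref{limF}.

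\textbf{The main obstacle.} Two points make this delicate. First, the explicit forms of the $G_i$ for $i\ge3$ and of the $H_i$ beyond $H_1$ are not recorded, so the whole computation is large-scale symbolic manipulation best handled by a computer algebra system, and the real danger is bookkeeping error. Second, and more substantively, the limits in \eqref{limF} exist only because the three numerators vanish at $\delta=1$ to precisely the order matching the power of $1-\delta$ in the denominator; one order too few and the limit is infinite. Pinning down this vanishing order --- which rests on the identities of the first step together with a careful evaluation of the $\widetilde H_i$ at $\delta=1$ --- is the genuine content of the lemma, and it is why those identities must be proved first; once it is in hand, the limit is a routine first- or second-order expansion.
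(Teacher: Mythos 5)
Your proposal matches the paper's approach: the paper's proof is a one-line appeal to direct computation checked by a computer algebra system, and your plan (verifying the rational identities in $\delta$ symbolically, then extracting the limits by expanding in $1-\delta$) is a fleshed-out version of exactly that. Your observation that the content of the lemma lies in the numerators vanishing to the right order at $\delta=1$ is correct and is implicitly what the CAS verification confirms.
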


	\begin{proof}
		The identities are verified through direct computations and can be checked by a computer algebra system.  
	\end{proof}

	In order to see cancellations of factors involving $a, b, c$ in the holomorphic sectional curvature, we apply the Gram--Schmidt process to determine an orthonormal frame $X,Y,Z$ instead of using the global coordinate vector fields $\frac{\partial}{\partial z_i},i=1,2,3$. 
	Indeed, let $g$ be any Hermitian metric, and take the first unit vector field 
	\begin{equation}\label{eq:orthonormal_X}
		X=\frac{\partial_1}{\sqrt{g_{1\overline{1}}}}.
	\end{equation}
	Write $k_1:= \frac 1{\sqrt{g_{1\overline{1}}}}$ so that $X= k_1 \partial _1$. Then a vector field $\tilde{Y}$ which is orthogonal to $X$ is given by 
	\begin{equation*}
		\tilde{Y}=\frac{\partial_2}{\sqrt{g_{2\overline{2}}}}-g \left (\frac{\partial_2}{\sqrt{g_{2\overline{2}}}},X \right )X=a_1 \partial_1+a_2 \partial_2,
	\end{equation*}
	where we put \[a_1:=-\frac{g_{2\overline{1}}}{g_{1\overline{1}}\sqrt{g_{2\overline{2}}}} \quad \text{ and } \quad a_2:=\frac{1}{\sqrt{g_{2\overline{2}}}}.\]
	
	Since $g(\tilde Y,\tilde Y)=a_1 \overline{a_1}g_{1\overline{1}}+a_1\overline{a_2}g_{1\overline{2}}+a_2\overline{a_1}g_{2\overline{1}}+a_2\overline{a_2}g_{2\overline{2}}$, we take 
	\begin{equation}\label{eq:orthonormal_Y}
		Y=\frac{\tilde{Y}}{\sqrt{g(\tilde{Y},\tilde{Y})}}=\frac{a_1 \partial_1+a_2 \partial_2}{\sqrt{a_1 \overline{a_1}g_{1\overline{1}}+a_1\overline{a_2}g_{1\overline{2}}+a_2\overline{a_1}g_{2\overline{1}}+a_2\overline{a_2}g_{2\overline{2}}}}=t_1\partial_1+t_2\partial_2,
	\end{equation}
	where we put \begin{equation}\label{eq:t_i}
		t_i:=\frac{a_i}{{\sqrt{a_1 \overline{a_1}g_{1\overline{1}}+a_1\overline{a_2}g_{1\overline{2}}+a_2\overline{a_1}g_{2\overline{1}}+a_2\overline{a_2}g_{2\overline{2}}}}}, \quad i=1,2.
	\end{equation}
	
	Similarly, consider 
	\begin{align*}
		\tilde{Z}=p_1\partial_1+p_2\partial_2+p_3\partial_3,
	\end{align*}
	where
	\begin{align*}
		p_1&:=-\frac{{g_{3\overline{1}}}}{g_{1\overline{1}}\sqrt{g_{3\overline{3}}}}-\frac{t_1}{\sqrt{g_{3\overline{3}}}}(t_1g_{3\overline{1}}+t_2g_{3\overline{2}}), \\
		p_2&:=-\frac{{t_2}}{\sqrt{g_{3\overline{3}}}}(t_1g_{3\overline{1}}+t_2g_{3\overline{2}}),\qquad p_3:=\frac{1}{\sqrt{g_{3\overline{3}}}}.
	\end{align*}
	Normalizing $\tilde{Z}$ yields
	\begin{align}\label{eq:orthonormal_Z}
		Z&=s_1\partial_1+s_2\partial_2+s_3\partial_3,
	\end{align}
	where 
	\begin{align*}
		s_i&:=\frac{p_i}{\sqrt{ \sum^{3}_{k,l=1} p_{k}p_{l}g_{k\overline{l}}}},\quad i=1,2,3.
	\end{align*}
	
	These $X,Y,Z$ are used in the following proposition which is the main result of this section.  
	
	\begin{prop}\label{prop:components}
		At $(0,y,z)\in E_{p, \lambda}, 0\leq y,z< 1$, the components of the holomorphic sectional curvature $R$ are given by as follows.
		\begin{align*}
			H(X)&=R(X, \bar X, X, \bar X)=  \frac {\widetilde H_1}{A_1^2},&B(X,Y)&=R(X, \bar X, Y, \bar Y)= \frac {\widetilde H_2} {A_1 A_2},\\
			H(Y)&=R(Y, \bar Y, Y, \bar Y)= \frac {\widetilde H_5} {A_2^2},&B(X,Z)&=R(X, \bar X, Z, \bar Z)= \frac{\widetilde F_1}{A_1 A_4},\\ 
			H(Z)&=R(Z, \bar Z, Z, \bar Z)=\frac {\widetilde F_3}{A_4^2}, &B(Y,Z)&=R(Y, \bar Y, Z, \bar Z)=\frac{\widetilde F_2}{A_2 A_4},  \end{align*}
		\begin{align*}
			&R(X, \bar X, X, \bar Y)=R(Y, \bar Y, Y, \bar X)=R(Z, \bar Z, Z, \bar Y)=R(Y, \bar X,Y, \bar X)=0, \\ & R(X, \bar X, X, \bar Z)=R(Y, \bar Y, Y, \bar Z)=R(Z, \bar Z, Z, \bar X)=R(Z, \bar X,Z, \bar X)=0,\\
			&R(X,\bar{X},Y,\bar{Z})=R(Y,\bar{Y},X,\bar{Z})=R(Z,\bar{Z},X,\bar{Y})=R(Z, \bar Y,Z, \bar Y)=0.
		\end{align*}
		
	\end{prop}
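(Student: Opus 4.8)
The plan is to expand each displayed curvature quantity by multilinearity of $R$ in the coordinate frame $\partial_1,\partial_2,\partial_3$, to insert the Gram--Schmidt coefficients of $X,Y,Z$ from \eqref{eq:orthonormal_X}--\eqref{eq:orthonormal_Z}, and then to verify that all powers of $a,b,c$ cancel. One works at a fixed point $(0,y,z)$ with $0\le y,z<1$, where $c=b^{1/p}$, $b=(1-\delta)a^\lambda$ and $y^2=\delta a^\lambda$.

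First I would record how the Gram--Schmidt procedure degenerates at such a point. By Proposition~\ref{prop:metric} the only nonzero off-diagonal metric entry is $g_{2\overline 3}=g_{3\overline 2}$, so $g_{1\overline 2}=g_{1\overline 3}=0$; hence $a_1=t_1=0$ and $p_1=0$, and therefore
\[ X=\frac{\partial_1}{\sqrt{g_{1\overline 1}}},\qquad Y=\frac{\partial_2}{\sqrt{g_{2\overline 2}}},\qquad Z=s_2\partial_2+s_3\partial_3 . \]
Using \eqref{g-com}, \eqref{eq-det} and the relations above, a direct computation gives
\[ |k_1|^2=\frac c{A_1},\qquad |t_2|^2=\frac{b^2}{a^\lambda A_2},\qquad |s_3|^2=\frac{a^{2-2\lambda}b^2}{(1-\delta)A_4}, \]
while the orthogonality $g(Z,\partial_2)=s_2 g_{2\overline 2}+s_3 g_{3\overline 2}=0$, together with $g_{3\overline 2}/g_{2\overline 2}=\lambda yz/a$, yields the clean identity $s_2=-\frac{\lambda yz}{a}\,s_3$. (Here the normalization $\sum_{k,l}p_k p_l g_{k\overline l}=(1-\delta)A_4/A_3$ comes from \eqref{eq-det} via $A_3-\lambda^2\delta z^2A_2=(1-\delta)A_4$.)

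Next I would substitute these coefficients into the multilinear expansions, discarding, by Proposition~\ref{prop:curv_components}, the coordinate curvature components that vanish. The ``pure'' terms are immediate: $H(X)=|k_1|^4R_{1\overline 1 1\overline 1}=(c/A_1)^2(\widetilde H_1/c^2)=\widetilde H_1/A_1^2$ and, in the same way, $H(Y)=\widetilde H_5/A_2^2$. For a mixed term one expands, for instance,
\[ R(X,\overline X,Z,\overline Z)=|k_1|^2\bigl(|s_2|^2R_{1\overline 1 2\overline 2}+2s_2 s_3 R_{1\overline 1 2\overline 3}+|s_3|^2R_{1\overline 1 3\overline 3}\bigr), \]
inserts the formulas of Proposition~\ref{prop:curv_components}, applies the identity $\widetilde H_3=\lambda\widetilde H_2$ of Lemma~\ref{lem-tH} together with the definition of $\widetilde F_1$, and checks (using $b=(1-\delta)a^\lambda$) that the $a,b,c$ factors collapse to leave $\widetilde F_1/(A_1A_4)$; the terms $B(X,Y)$, $B(Y,Z)$ and $H(Z)$ are handled the same way, now invoking the remaining identities $\widetilde H_6=\lambda\widetilde H_5$, $\widetilde H_8=\lambda\widetilde H_6$, $\widetilde H_9=2\lambda\widetilde H_7-\lambda^2\delta z^2\widetilde H_6$ and the definitions of $\widetilde F_2,\widetilde F_3$. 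The vanishing identities split into two cases: those with an unbalanced ``$1$''-index (such as $R(X,\overline X,X,\overline Y)$, $R(X,\overline X,X,\overline Z)$, $R(Y,\overline Y,Y,\overline X)$, $R(Y,\overline X,Y,\overline X)$) reduce to coordinate components absent from Proposition~\ref{prop:curv_components}, while the purely $(2,3)$-type ones (such as $R(Y,\overline Y,Y,\overline Z)$, $R(X,\overline X,Y,\overline Z)$, $R(Z,\overline Y,Z,\overline Y)$, $R(Z,\overline Z,Z,\overline Y)$) collapse because the identity $s_2=-(\lambda yz/a)s_3$ makes the relevant combination of curvature components a perfect-square multiple that vanishes.

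I expect the bookkeeping for $H(Z)$ and $B(Y,Z)$ to be the main obstacle, since these are the terms where $Z$ enters through both $\partial_2$ and $\partial_3$ in all four slots, so the cross terms must be regrouped carefully into $\widetilde F_2$ and $\widetilde F_3$ after applying the Lemma~\ref{lem-tH} identities and the cancellations $b=(1-\delta)a^\lambda$, $s_2=-(\lambda yz/a)s_3$. All of this consists of finite mechanical verifications, readily confirmed with a computer algebra system, so I would carry out one representative case in full detail and indicate that the remaining ones are entirely analogous.
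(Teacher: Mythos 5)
Your proposal is correct and follows essentially the same route as the paper's proof: the same observation that $a_1=t_1=p_1=s_1=0$ at $(0,y,z)$, the same key relation $s_2=-(\lambda yz/a)s_3$, and the same strategy of expanding by multilinearity, inserting the formulas of Proposition~\ref{prop:curv_components}, and cancelling via the identities of Lemma~\ref{lem-tH} (the paper likewise computes a few representative components, e.g.\ $H(Y)$, $B(X,Y)$ and $R(Y,\bar Y,Y,\bar Z)$, and declares the rest analogous). The only slight imprecision is calling the vanishing combinations ``perfect-square multiples''; they are linear combinations annihilated by the relations $\widetilde H_6=\lambda\widetilde H_5$, $\widetilde H_8=\lambda\widetilde H_6$, $\widetilde H_9=2\lambda\widetilde H_7-\lambda^2\delta z^2\widetilde H_6$, but this does not affect the correctness of the argument.
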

	
	\begin{proof}
		All the identities follow from Proposition \ref{prop:curv_components} and Lemma \ref{lem-tH}. To illustrate the process, we compute $H(X)$, $B(X,Y)$ and $R(Y, \bar Y, Y, \bar Z)$. Computations of the other components are similar.
		
		Since $g_{2\overline{1}}=0$ and $g_{3\overline{1}}=0$, we have $a_1=0$, $t_1 =0$, $p_1=0$ and $s_1=0$ on $(0,y,z)$. On the other hand,  
		\[ t_2 = \frac{a_2}{ \sqrt{a_2 \overline{a_2} g_{2 \overline 2}} } = \frac 1 {\sqrt {g_{2 \overline 2}}} .\]
		Thus, using \eqref{g-com},  we obtain
		\begin{align*} H(Y) &= t_2^4 R_{2 \overline 2 2 \overline 2} = \frac {b^4}{a^{2 \lambda}} \frac 1 {A_2^2} \cdot \frac{a^{2\lambda}}{b^4} \widetilde H_5 = \frac {\widetilde H_5} {A_2^2}. \end{align*} 
		Similarly, 
		\begin{align*} B(X,Y) & = k_1^2 t_2^2 R_{1 \overline 1 2 \overline 2}= \frac 1 {g_{1 \overline 1}} \frac 1 {g_{2 \overline 2}} \cdot 
			\frac{a^\lambda} {b^2 c} \cdot \widetilde H_2 = \frac c {A_1} \frac {b^2}{a^\lambda A_2} \frac {a^\lambda} {b^2 c} \widetilde H_2   = \frac 1 {A_1 A_2}\widetilde H_2. \end{align*}
		
		To compute $R(Y, \bar Y, Y, \bar Z)$, first observe
		\[ s_2= -s_3 t_2^2 g_{3 \overline{2}} = -s_3 \frac {g_{3 \overline{2}}}{g_{2 \overline{2}}}= -s_3 \frac {\lambda y z}{a}. \]
		Thus it follows from  Proposition \ref{prop:curv_components} and Lemma \ref{lem-tH} that 
		\begin{align*}
			R(Y, \bar Y, Y, \bar Z) &=t_2^3 s_2 R_{2 \overline 2 2 \overline 2}+t_2^3s_3 R_{2 \overline 2 2 \overline 3}= t_2^3 \left ( - s_3 \frac {\lambda y z}{a} \right ) \frac{a^{2\lambda}}{b^4} \widetilde H_5+t_2^3s_3 \frac{yza^{2\lambda-1}}{b^4} \widetilde H_6\\ &= \frac {t_2^3 s_3 a^{2\lambda -1} yz}{b^4} \left ( -\lambda \widetilde H_5 + \widetilde H_6 \right ) =0. 
		\end{align*}
	\end{proof}
	
	\begin{coro}\label{prop:hsc} 
		The holomorphic sectional curvature near $\partial K_1$ is bounded for any $p, \lambda >0$.\end{coro}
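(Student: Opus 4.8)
The plan is to read the holomorphic sectional curvature off the orthonormal frame $X,Y,Z$ of Proposition~\ref{prop:components} and exploit the fact that, in that frame, all the factors $a,b,c$ that degenerate on $\partial E_{p,\lambda}$ have already cancelled, leaving a rational expression in $\delta$ and $z$ only --- and both $\delta$ and $z$ stay in the compact interval $[0,1]$ near $\partial K_1$. On the portions of $\partial K_1$ lying in the interior of $E_{p,\lambda}$ (the arcs $y=0$ and $z=0$) the Bergman metric is smooth, so boundedness of the curvature there is automatic; the only issue is the approach to $\partial E_{p,\lambda}$, i.e.\ to the arc $\delta=1$ (which includes $(0,1,0)$) and to the corner $(0,0,1)$. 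In the first case $\delta\to1^-$ with $z$ bounded; in the second, $z\to1$ while $\delta$ may have any limit in $[0,1]$. In all cases $(\delta,z)$ ranges in $[0,1]^2$. (By invariance of the Bergman metric, automorphisms preserve $H$, and as recalled at the start of this section every point of $E_{p,\lambda}$ is equivalent to one of the form $(0,y,z)$, $0\le y,z<1$, so this also bounds $H$ in a full neighborhood of $\partial K_1$ in $E_{p,\lambda}$.)

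Next I would expand $H$ in the frame. For a unit vector $v=\alpha X+\beta Y+\gamma Z$ with $|\alpha|^2+|\beta|^2+|\gamma|^2=1$, the vanishing of all the mixed curvature components listed in Proposition~\ref{prop:components} shows that $H(v)=R(v,\bar v,v,\bar v)$ is a linear combination, with coefficients among $\{|\alpha|^4,|\beta|^4,|\gamma|^4,|\alpha|^2|\beta|^2,|\alpha|^2|\gamma|^2,|\beta|^2|\gamma|^2\}$ (each $\le1$), of the six surviving components $\widetilde H_1/A_1^2$, $\widetilde H_5/A_2^2$, $\widetilde F_3/A_4^2$, $\widetilde H_2/(A_1A_2)$, $\widetilde F_1/(A_1A_4)$, $\widetilde F_2/(A_2A_4)$. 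So it suffices to bound these six rational functions of $(\delta,z)$ on $[0,1]^2$.

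For the denominators, I would show $A_1,A_2,A_4$ are bounded away from $0$: each has denominator a power of $u_3+u_4\delta$, and $u_3+u_4\delta>0$ on $[0,1]$ --- immediate for $\lambda\ge1$, and for $0<\lambda<1$ one uses $u_3+u_4\delta\ge u_3+u_4=(p+1)\lambda(1+2p)>0$ --- while the limits in~\eqref{limA} confirm positivity at $\delta=1$. The numerators $\widetilde H_1=-H_1$, $\widetilde H_2=-H_2+\delta G_1^2/A_1$, $\widetilde H_5=-H_5+\delta G_3^2/A_2$ are rational in $(\delta,z)$ with denominators of the same type, hence continuous and bounded on $[0,1]^2$; this is just the boundedness of the $H_i$ and $G_i$ noted after Tables~\ref{Gs} and~\ref{Hs}. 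The one subtle point is $\widetilde F_1,\widetilde F_2,\widetilde F_3$, whose denominators contain the apparently dangerous factor $1-\delta$ (or $(1-\delta)^2$); here I would invoke the identities of Lemma~\ref{lem-tH}, which force $1-\delta$ to divide the numerator, so that by the finite limits in~\eqref{limF} these too extend to bounded rational functions on $[0,1]^2$. Together, all six components are continuous on the compact set $[0,1]^2$, hence bounded, which proves the corollary. I expect this last step --- confirming that the naive blow-up of $\widetilde F_1,\widetilde F_2,\widetilde F_3$ at $\delta=1$ really cancels --- to be the main obstacle, though Lemma~\ref{lem-tH} has already isolated exactly the cancellations needed.
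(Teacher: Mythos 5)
Your proposal is correct and follows essentially the same route as the paper: the paper's proof of this corollary is exactly the observation that the frame components of Proposition~\ref{prop:components} are controlled by the limits \eqref{limA} and \eqref{limF} together with the boundedness of the $G_i$ and $H_i$ as $\delta\rightarrow 1^-$. You merely spell out the details the paper leaves implicit (the expansion of $H(v)$ over the orthonormal frame using the vanishing mixed components, the positivity of $u_3+u_4\delta$ and of $A_1,A_2,A_4$, and the cancellation of the $1-\delta$ factors via Lemma~\ref{lem-tH}), all of which are consistent with the paper's argument.
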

	
	\begin{proof}
		The assertion follows from \eqref{limA} and \eqref{limF} and the fact that $G_i$ and $H_i$ are bounded as $\delta \rightarrow 1^-$.
	\end{proof}

	It is known \cite{GC19} that the  curvature tensor of the Bergman metric is bounded for $\lambda=1$ and $p >0$.  The following proposition tells us that the same is true for any $p,\lambda >0$.
	
	\begin{prop}\label{prop:bounded_geometry_bergman_step1} The curvature tensor of the Bergman metric on $E_{p,\lambda}$ is bounded for any $p,\lambda >0$. 
	\end{prop}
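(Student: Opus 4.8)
The plan is to deduce global boundedness of the Bergman curvature tensor from its boundedness on the slice $K_1$, and on $K_1$ to read $|R_m|$ off the pointwise formulas of Proposition~\ref{prop:components}.

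First I would use that the Bergman metric is a biholomorphic invariant, so the pointwise norm $q\mapsto |R_m(q)|_{\omega_B}$ of its curvature tensor is constant along $\mathrm{Aut}(E_{p,\lambda})$-orbits. As recalled at the beginning of this section, every point of $E_{p,\lambda}$ is moved into $K_1$ by an automorphism: a one-dimensional disc automorphism in the $x$-variable carries $(x,y,z)$ to $(0,y,z)$, and a rotation carries $(0,y,z)$ to $(0,|y|,|z|)\in K_1\cap E_{p,\lambda}$, using that $|y|<1$ and $|z|<1$ whenever $(0,y,z)\in E_{p,\lambda}$. Hence $\sup_{E_{p,\lambda}}|R_m|=\sup_{K_1\cap E_{p,\lambda}}|R_m|$, and it suffices to bound the right-hand side.

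Next I would evaluate $|R_m|$ at $(0,y,z)\in K_1\cap E_{p,\lambda}$ in the $\omega_B$-orthonormal frame $X,Y,Z$ of \eqref{eq:orthonormal_X}--\eqref{eq:orthonormal_Z}, for which $|R_m|^2=\sum|R(E_a,\bar E_b,E_c,\bar E_d)|^2$ with $E_a,E_b,E_c,E_d$ running over $\{X,Y,Z\}$. By Proposition~\ref{prop:components}, the six ``diagonal'' components $H(X),H(Y),H(Z),B(X,Y),B(X,Z),B(Y,Z)$ are rational expressions in $A_1,A_2,A_4,\widetilde H_1,\widetilde H_2,\widetilde H_5,\widetilde F_1,\widetilde F_2,\widetilde F_3$ from which all factors of $a,b,c$ have cancelled, while the off-diagonal components displayed there vanish. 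The off-diagonal components \emph{not} displayed there all carry an unequal number of $X$ and $\bar X$ entries, and hence also vanish, because the rotation $(x,y,z)\mapsto(e^{\sqrt{-1}\theta}x,y,z)$ is an automorphism of $E_{p,\lambda}$ fixing $(0,y,z)$ under which $X\mapsto e^{\sqrt{-1}\theta}X$, $Y\mapsto Y$, $Z\mapsto Z$, so invariance of $R$ forces any component in which the number of $X$ entries differs from the number of $\bar X$ entries to be $0$. Thus every component of $R$ in this frame is either $0$ or one of the six bisectional-type quantities above. (Equivalently, this underlies the proof of Corollary~\ref{prop:hsc}, which expresses the holomorphic sectional curvature in every unit direction through precisely these six quantities; the full K\"ahler curvature tensor is then recovered from the holomorphic sectional curvature by polarization.)

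Finally I would parametrize $K_1\cap E_{p,\lambda}$ by $(\delta,z)\in[0,1)\times[0,1)$ with $\delta=y^2/(1-z^2)^\lambda$. Using \eqref{g-com}--\eqref{eq-det} the functions $A_1,A_2,A_4$ are strictly positive, and by \eqref{limA} each has a finite positive limit as $\delta\to 1^-$; since their denominators do not vanish on $[0,1]$ they extend continuously to the compact square $[0,1]^2$, so $A_1,A_2,A_4$ and their reciprocals are bounded there. In the same way $\widetilde H_1,\widetilde H_2,\widetilde H_5$ and, by \eqref{limF}, $\widetilde F_1,\widetilde F_2,\widetilde F_3$ extend continuously to $[0,1]^2$ and are bounded. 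Hence the six surviving components, and therefore $|R_m|^2$, are bounded on $K_1\cap E_{p,\lambda}$; combined with the first step this gives $\sup_{E_{p,\lambda}}|R_m|<\infty$, as desired.

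The step I expect to be the main obstacle is the bookkeeping in the third paragraph: one must verify that Proposition~\ref{prop:components}, supplemented by the rotational symmetry, accounts for \emph{all} components of $R$ in the orthonormal frame up to the K\"ahler symmetries, so that boundedness of the six bisectional-type quantities really controls $|R_m|$, and one must confirm that the denominators $A_1,A_2,A_4$ (and $u_3+u_4\delta$) stay bounded away from zero on the whole parameter square, not only as $\delta\to 1^-$.
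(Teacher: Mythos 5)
Your argument is correct and follows essentially the same route as the paper's proof: reduce to the slice $K_1$ by the automorphism invariance of the Bergman metric, then bound the curvature there via the explicit frame computations of Proposition~\ref{prop:components} and the finite limits \eqref{limA}, \eqref{limF}. You merely make explicit two points the paper leaves implicit --- the $x$-rotation symmetry killing the undisplayed components (where the paper instead appeals to recovering the full K\"ahler curvature tensor from the holomorphic sectional curvature) and the two-sidedness of the bound from Corollary~\ref{prop:hsc}.
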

	
	\begin{proof}
		The curvature tensor can be explicitly expressed in terms of the holomorphic sectional curvature $H_{g_B}$. Using the invariance of the Bergman metric, it suffices to show $H_{g_B} \le C$ on $\partial K_1$ by some constant $C\in \mathbb{R}$. By Corollary \ref{prop:hsc}, we are done. 
	\end{proof}	
	
	\begin{coro}\label{cor:main}
		For any $p,\lambda >0$, there exist $C_0>0$ such that 
		\begin{equation*}
			\chi_{E_{p,\lambda}}(p;v) \leq C_0 \sqrt{\omega_{B}(v,v)} \qquad \text{ for all } v\in T'_{p}E_{p,\lambda}, \, p\in M,
		\end{equation*}
		and $C_1>0$ such that 
		\begin{equation*}
			\frac{1}{C_1}\omega_{KE}(v,v)\leq \omega_{B}(v,v) \leq C_1\omega_{KE}(v,v) \qquad \text{ for all } v\in T'E_{p,\lambda}.
		\end{equation*}
		\begin{proof}
			The assertion immediately follows from Proposition~\ref{prop:bounded_geometry_bergman_step1} and Lemma~\ref{lem:det/kernel}.
		\end{proof}	
		
	\end{coro}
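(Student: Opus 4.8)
The plan is to read both assertions off from Theorem~\ref{thm:general_statement}, applied to the complete noncompact K\"ahler manifold $M=E_{p,\lambda}$ equipped with its Bergman metric $\omega_B$. The first inequality is then part A of that theorem and the second is part B, so the whole argument reduces to verifying the hypotheses of Theorem~\ref{thm:general_statement}: that $\omega_B$ has bounded curvature (a standing assumption there, used for both parts) and, for part B, that $B/\omega_B^{\,n}$ is a bounded function on $E_{p,\lambda}$, where $B$ denotes the Bergman kernel restricted to the diagonal. Everything else in Theorem~\ref{thm:general_statement}B, namely the actual construction of $\omega_{KE}$ and the equivalence constant $C_1$, is already packaged into that statement (it is where A.\,Chau's K\"ahler--Ricci flow enters), so I would not reprove it.

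The bounded-curvature hypothesis is exactly Proposition~\ref{prop:bounded_geometry_bergman_step1}. Its proof, in turn, uses the automorphism-invariance of the Bergman metric together with the reduction established at the start of the section on $E_{p,\lambda}$ --- every point of $E_{p,\lambda}$ is moved by an element of $\mathrm{Aut}(E_{p,\lambda})$ into the compact slice $K_1=\overline{\{(0,y,z):0\le y,z<1\}}$ --- so that, by polarization, it is enough to bound the holomorphic sectional curvature on $K_1$, i.e.\ as $\delta\to1^-$, which is Corollary~\ref{prop:hsc}. For the volume-ratio hypothesis I would note that $\det g_B/B$ is itself an automorphism invariant: under a biholomorphism $\phi$ both $\det g_B$ and $B$ are multiplied by the same factor $|\det D\phi|^{-2}$, so their quotient is unchanged. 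Hence, again invoking that $\mathrm{Aut}(E_{p,\lambda})$ sweeps every point into $K_1$, it suffices to bound $\det g_B/B$ on $K_1$; this is precisely Lemma~\ref{lem:det/kernel}, where the quotient is computed explicitly from \eqref{bfr}, \eqref{g-com}, \eqref{eq-det} and seen to remain within a fixed positive range because $u_3+u_4\delta$ and $(\lambda+\lambda p+p)+(\lambda-1)p\,\delta$ stay bounded away from $0$ for $0\le\delta<1$ and the $A_i$ are positive with finite limits \eqref{limA}. Thus $B/\omega_B^{\,n}$ is bounded, Theorem~\ref{thm:general_statement}B applies and gives $C_1$, and Theorem~\ref{thm:general_statement}A gives $C_0$.

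The step I expect to be the genuine obstacle --- although it has been absorbed into the cited results rather than left for this corollary --- is the curvature-boundedness input. The individual metric and curvature components in Propositions~\ref{prop:metric} and~\ref{prop:curv_components} degenerate as $(0,y,z)$ approaches $\partial K_1$, and boundedness of the holomorphic sectional curvature (and likewise of $\det g_B/B$) emerges only after passing to the Gram--Schmidt orthonormal frame $X,Y,Z$ and using the cancellation identities of Lemma~\ref{lem-tH} (for instance $\widetilde H_3=\lambda\widetilde H_2$, together with the finite limits \eqref{limF} of $\widetilde F_1,\widetilde F_2,\widetilde F_3$). Granting those, the corollary is a formal consequence of Theorem~\ref{thm:general_statement}; the only residual care needed is to keep the invariance reductions honest, i.e.\ to confirm that the quantities one bounds on $K_1$ are exactly the automorphism-invariant ones that Theorem~\ref{thm:general_statement} requires on all of $E_{p,\lambda}$, which is immediate since both $\omega_B$ and $\det g_B/B$ are invariant and since the curvature tensor is determined by $\omega_B$.

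I would therefore write the proof in three short moves: (1) invoke Proposition~\ref{prop:bounded_geometry_bergman_step1} to get the bounded-curvature hypothesis, yielding the $C_0$ inequality from Theorem~\ref{thm:general_statement}A; (2) invoke the invariance of $\det g_B/B$ and Lemma~\ref{lem:det/kernel} to get that $B/\omega_B^{\,n}$ is bounded; (3) conclude from Theorem~\ref{thm:general_statement}B the existence of $\omega_{KE}$ and of $C_1$ with the stated two-sided equivalence, and collect $C_0$ and $C_1$ as the asserted constants.
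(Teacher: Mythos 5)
Your proposal is correct and follows essentially the same route as the paper: the corollary is deduced by feeding Proposition~\ref{prop:bounded_geometry_bergman_step1} (bounded curvature of $\omega_B$) into Theorem~\ref{thm:general_statement}A and Lemma~\ref{lem:det/kernel} (boundedness of $\det g_B/B$, which stays in a fixed positive range so that $B/\omega_B^n$ is bounded too) into Theorem~\ref{thm:general_statement}B. Your explicit remark that $\det g_B/B$ is automorphism-invariant, so that the bound on the slice $K_1$ propagates to all of $E_{p,\lambda}$, merely makes precise a reduction the paper leaves implicit; it is not a different argument.
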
	
	
	\begin{rema}
		For the third statement of Theorem~\ref{thm:comparison}, in general, the holomorphic sectional curvature is not negatively pinched for $E_{p, \lambda}$. For example, when $\lambda=1$ and $p=1/5$, we have $\lim_{\delta \rightarrow 1^-} H(X) \approx 0.033 >0$. \end{rema}
	
	Lastly, we obtain interesting rigidity in the following proposition from direct computation of the Ricci curvature of the Bergman metric and we omit the proof. 
	\begin{prop}\label{prop:aysmp-Ric} 
		The Bergman metric $g_B$ on $E_{p,\lambda}$ is a K\"ahler--Einstein metric if and only if $\lambda=p=1$.
	\end{prop}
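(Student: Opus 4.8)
The plan is to treat the two directions separately: the forward implication is classical, while the converse is a direct curvature computation resting on the formulas assembled above in this section. For the ``if'' part, when $\lambda=p=1$ the defining inequality reads $|x|^2+|y|^2+|z|^2<1$, so $E_{1,1}=B_3$, and it is classical that the Bergman metric of the unit ball is a complete K\"ahler--Einstein metric (of constant holomorphic sectional curvature $-2/(n+1)$, hence with $\mathrm{Ric}(g_B)=-g_B$).

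For the ``only if'' part, suppose $g_B$ is K\"ahler--Einstein. Both $g_B$ and its Ricci tensor are invariant under $\mathrm{Aut}(E_{p,\lambda})$, and, as recalled at the start of this section, every point of $E_{p,\lambda}$ lies in the automorphism orbit of a point $(0,y,z)$ of the slice $K_1$; hence the Einstein equation holds on $E_{p,\lambda}$ if and only if it holds along $K_1$. On $K_1$ I would use the orthonormal frame $X,Y,Z$ of Proposition~\ref{prop:components}. Since every ``mixed'' curvature component listed there vanishes, the Ricci tensor is diagonal in this frame, with diagonal entries obtained by contracting the components of Proposition~\ref{prop:curv_components}:
\begin{align*}
\mathrm{Ric}(X)&=H(X)+B(X,Y)+B(X,Z),\\
\mathrm{Ric}(Y)&=B(X,Y)+H(Y)+B(Y,Z),\\
\mathrm{Ric}(Z)&=B(X,Z)+B(Y,Z)+H(Z).
\end{align*}
Thus $g_B$ being K\"ahler--Einstein forces $\mathrm{Ric}(X)\equiv\mathrm{Ric}(Y)\equiv\mathrm{Ric}(Z)$ as functions on $K_1$ (that the common value is then constant is automatic by Schur's lemma for K\"ahler metrics, but this is not needed below).

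Next I would substitute the closed forms of $H(X),H(Y),H(Z),B(X,Y),B(X,Z),B(Y,Z)$ from Proposition~\ref{prop:components} --- that is, the expressions in $A_1,A_2,A_4$ and $\widetilde H_1,\widetilde H_2,\widetilde H_5,\widetilde F_1,\widetilde F_2,\widetilde F_3$, all of which are explicit algebraic functions of $\delta,z,p,\lambda$ --- clear denominators, and view $\mathrm{Ric}(X)-\mathrm{Ric}(Z)$ and $\mathrm{Ric}(Y)-\mathrm{Ric}(Z)$ as rational functions of $(\delta,z)$ whose numerators are polynomials with coefficients polynomial in $p,\lambda$. The Einstein condition forces all these coefficients to vanish, which is a polynomial system in $p,\lambda$. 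To make the system manageable I would first extract a few low-complexity consequences: in the boundary limit $\delta\to 1^-$ one may use \eqref{limA} and \eqref{limF}, which already gives $\mathrm{Ric}(Z)\to -1$ for every $p,\lambda$ (so the Einstein constant must be $-1$); imposing in addition $\mathrm{Ric}(X)\to -1$ and $\mathrm{Ric}(Y)\to -1$, together with the identities along interior loci such as the axis $\{(0,0,z)\}$ (where $\delta=0$), should already force $p=\lambda=1$, after which the remaining coefficient identities can be confirmed.

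The main obstacle is purely computational: one needs the explicit forms of $\widetilde H_1,\widetilde H_2,\widetilde H_5$ and of $\widetilde F_1,\widetilde F_2,\widetilde F_3$ (equivalently of the $H_i$ and $G_i$), which are bulky, and then one must verify that the resulting polynomial system in $(p,\lambda)$ has no solution with $p,\lambda>0$ other than $(1,1)$. As elsewhere in this section, this step is carried out with the aid of a computer algebra system; the conceptual content lies entirely in the reduction to the three diagonal Ricci identities on $K_1$, and the one real pitfall is choosing too few specializations to isolate both parameters, so I would keep $z$ as a free variable rather than relying on the boundary limit alone.
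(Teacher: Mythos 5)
The paper states Proposition~\ref{prop:aysmp-Ric} with no proof at all --- only the preceding sentence that it comes ``from the direct computation of the Ricci curvature of the Bergman metric'' --- so there is no written argument to compare yours against. Judged on its own terms, your plan is the natural one and its skeleton is correct: $E_{1,1}=B_3$ and the ball's Bergman metric is K\"ahler--Einstein; the vanishing of the mixed components in Proposition~\ref{prop:components} together with the K\"ahler symmetries does make the Ricci tensor diagonal in the frame $X,Y,Z$, with the three diagonal entries you wrote; and your observation that $\mathrm{Ric}(Z,\bar Z)\to -1$ as $\delta\to 1^-$ for all $p,\lambda$ (which indeed follows from \eqref{limA} and \eqref{limF}) correctly pins the Einstein constant to $-1$. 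A useful sanity check: at $p=\lambda=1$ one gets $H(X)\to -1/2=-2/(n+1)$, as it must for $B_3$.

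The genuine gap is that the decisive step is only announced, not performed: you never verify that the conditions $\mathrm{Ric}(X,\bar X)=\mathrm{Ric}(Y,\bar Y)=-1$ on $K_1$ exclude all $(p,\lambda)\neq(1,1)$ with $p,\lambda>0$, and this requires the explicit $\widetilde H_1,\widetilde H_2,\widetilde H_5$, which the paper deliberately does not display. Until that polynomial system is actually solved, the ``only if'' direction is a plan rather than a proof, and the hedge ``should already force $p=\lambda=1$'' cannot be certified. I would also point out a shorter route that uses machinery already in the section: with Einstein constant $-1$, the K\"ahler--Einstein condition is exactly that $\log(\det g_B/B)$ is pluriharmonic, and a pluriharmonic function on the complete Reinhardt domain $E_{p,\lambda}$ that is invariant under the torus rotations must be constant; Lemma~\ref{lem:det/kernel} gives the closed form
\begin{equation*}
\frac{\det g_B}{B}=\frac{\pi^3 p^2\,A_1A_2A_4}{(p+1)\bigl((\lambda+\lambda p+p)+(\lambda-1)p\,\delta\bigr)}
\end{equation*}
on $K_1$, so the whole ``only if'' direction reduces to deciding for which $p,\lambda>0$ this single rational function of $\delta\in[0,1)$ is constant --- a one-variable computation far lighter than clearing denominators in the three Ricci identities. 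Either way, the computation must actually be carried out before the proposition can be considered proved.
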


	\section{A lower bound of the integrated Carath\'eodory--Reiffen metric}
	In this last section, we prove the following theorem.
	
	\begin{mainthm}\label{thm:lower-bounded-c-metric}
		Let $(M,g)$ be a simply-connected complete noncompact $n$-dimensional K\"ahler manifold whose Riemannian sectional curvature $k$ of $g$ satisfies $ k \leq -a^2$ for some $a>0$. We denote by $d$ the geodesic distance on $M$, and by $\gamma_M$ the Carath\'eodory--Reiffen metric on $M$. For any $p\geq 2$, the following are true.  
		
		1. 	Let $f$ be a holomorphic function from $M$ to the unit disk $\mathbb{D}$ in $\mathbb{C}$. Then	\begin{equation}\label{eq:gradient-estimate2}	
			\int_{M}  \left | \int_{M} G(x,y)|\nabla f|^2(y)dy \right |^p  dx  \leq \left(\frac{p}{(2n-1)a} \right)^{p} \int_{M} |f(x)|^p \gamma_M(x;\nabla f(x))^{\frac{p}{2}}dx ,  
		\end{equation}
		where $G(x,y)$ is the minimal positive Green's function on $M$. 
		
		2. If the Riemannian sectional curvature $k$ of $g$ further satisfies $-b^2 \leq k$ for some $b>0$.	 Then there exists a constant $C(n)>0$, which only depends on $n$, such that for any holomorphic function $f$ from $M$ to the unit disk $\mathbb{D}$, we have  
		\begin{align}\label{eq:gradient-estimate}	
			& \int_{0}^{\infty}\int_{M}\left( \int_{M} t^{-n}\exp[-\frac{d(x,y)^2}{2t}-\frac{(2n-1)^2 b^2t}{8}-\frac{(2n-1)b d(x,y) }{2}](1+b d(x,y))|\nabla f|^2(y)dy\right)^p dx dt \\	\nonumber
			& \leq C(n) \left(\frac{2\pi p}{(2n-1)a} \right)^p  \int_{M} |f(x)|^p\gamma_M(x;\nabla f(x))^{\frac{p}{2}}dx.
		\end{align} 
	\end{mainthm}
	
	The inequalities~\eqref{eq:gradient-estimate2} and ~\eqref{eq:gradient-estimate} can be interpreted as integrated gradient estimates of bounded holomorphic functions. 
	
	Although the lemmas below are known, we prove them here for tracking explicit constants for the proof of Theorem~\ref*{thm:lower-bounded-c-metric}.
	
	Let $M$ be an $n$-dimensional complete noncompact, simply connected Riemannian manifold, and let $L^2(M)$ be the space of $L^2$-functions on $M$. Denote by $W^1(M)$ the Hilbert space consisting of $L^2$-functions whose gradient are also $L^2$, and by $W^1_{0}(M)$ the subspace in $W^1(M)$ which is the completion of the space $C^{\infty}_{0}(M)$ under $W^1(M)$-norm. When $M$ is complete, we have $W^1(M)=W^1_{0}(M)$.
	
	\begin{lemma}[{\cite[Poincar\'e inequality]{SchoenRYauST94}}] \label{eq:Poincare}
		Let $M$ be an $n$-dimensional complete noncompact, simply connected Riemannian manifold with sectional curvature $k\leq -a^2<0$. Then 
		\begin{equation}\label{eq:1.2}
			\int_{M} |u|^2 \leq \frac{4}{(n-1)^2a^2}\int_{M} |\nabla u|^2, \qquad u\in W^{1}_{0}(M).
		\end{equation}
		
		\begin{proof}
			Let $r(x)=d(p_0,x)$ be the distance function from a fixed point $p_0 \in M$. From the Rauch comparison theorem, we have
			\begin{equation}\label{eq:1.3}
				\triangle r \geq (n-1)a,
			\end{equation}
			where $a>0$. 
			
			Let $\Omega$ be the geodesic ball centered at $p_0$ with radius $R>0$ in $M$. From the Green's theorem, we have for every $u\in C^{\infty}_{0}(\Omega)$,
			\begin{equation*}
				\int_{\Omega}|u|^2 \triangle r - \int_{\Omega} \nabla(|u|^2)\cdot{\nabla r} = \int_{b\Omega}|u|^2 d\sigma =0,
			\end{equation*}
			where $d\sigma$ is the surface measure on $b\Omega$. We remark that $r$ may not be smooth at $p_0$, but we can apply the Green's theorem to $\Omega$ minus a small ball of radius $\epsilon>0$ around $p_0$ and let $\epsilon \rightarrow 0$. 
			From \eqref{eq:1.3} and $|\nabla r|=1$, we have
			\begin{align*}
				(n-1)a\lVert u \rVert^2\leq \int_{\Omega} |u|^2 \triangle r = \int_{\Omega} \nabla (|u|^2)\cdot \nabla r \leq \int_{\Omega} |\nabla (|u|^2) | \leq 2 \lVert u \rVert \, \lVert \nabla u \rVert.
			\end{align*}
			This gives
			\begin{equation*}
				\lVert u \rVert  \leq \frac{2}{(n-1)a} \lVert  \nabla u \rVert , \qquad  u\in C^{\infty}_{0}(\Omega).
			\end{equation*}
			Since $C^{\infty}_{0}(M)$ is dense in $W^{1}_{0}(M)$, we are done. 
		\end{proof}
		
	\end{lemma}
	
	Let $\triangle_0$ denote the Laplace--Beltrami operator. We use Mckean's estimate \cite{McKeanHP70} on the first eigenvalue of $\triangle_0$. 
	
	\begin{lemma}[{\cite[Mckean's estimate]{McKeanHP70}}] \label{Lem:Mckean}
		Let $M$ be an $n$-dimensional complete noncompact, simply-connected Riemannian manifold with sectional curvature $k\leq -a^2<0$. Then
		we have	\begin{equation}\label{eq:1.4}
			\lambda_1\geq \frac{(n-1)^2a^2}{4},
		\end{equation}
		where $\lambda_1$ is the smallest eigenvalue of $\triangle_0$.  	
		\begin{proof}
			From Lemma~\ref{eq:Poincare}, for every $u\in C^{\infty}_{0}(M)$, 
			\begin{equation*}
				(\triangle_0 u,u)=(du,du)=\int_{\Omega} |\nabla u |^2\geq \frac{(n-1)^2a^2}{4}\int_{\Omega} |u|^2.
			\end{equation*}
			The assertion follows. 
		\end{proof}
		
	\end{lemma}
	
	\begin{lemma}[{\cite[Cheng]{ChengShiuYuen93}}] \label{Lem:Cheng}
		Let $M$ be an $n$-dimensional Riemannian manifold. Consider the first eigenvalue for the Dirichlet problem $\lambda_1(M)>0$. Let $\Omega$ be a relatively compact domain of $M$ such that $b\Omega$ is smooth. Let $f\in C^{\infty}(M)$ and let $u$ be the solution of 
		\begin{align*}
			\begin{cases} \triangle u = \triangle f  & \text{on  } \Omega, \\
				u = 0 & \text{on  } b\Omega.  \end{cases}
		\end{align*}
		Then for any $p\geq 2$,
		\begin{equation}\label{eq:1.5}
			\int_{\Omega} |u|^p \leq C_p \int_{\Omega} |\nabla f|^p,
		\end{equation}
		where the constant $C_p$ depends only on $p$ and $\lambda_1(M)$. 
		\begin{proof}
			Assume that $p \ge 2$.   Multiplying the equation by $u^{p-1}$ and integrating it, we have
			\begin{align*}
				(p-1)\int_{\Omega} |\nabla u|^2u^{p-2}&=(\nabla u, \nabla u^{p-1})=(\nabla f, \nabla u^{p-1})\\
				&\leq (p-1)\int_{\Omega} |\nabla f||\nabla u|u^{p-2}\\
				&\leq (p-1) \left(\int_{\Omega}|\nabla u|^2 u^{p-2} \right)^{1/2}\left(\int_{\Omega}|\nabla f|^2 u^{p-2} \right)^{1/2}.
			\end{align*}
			Thus we have
			\begin{equation*}
				\frac{4}{p^2}\int_{\Omega} |\nabla u^{p/2}|^2 \leq \int_{\Omega} |\nabla f|^2 u^{p-2}\leq \left(\int_{\Omega} |u|^p \right)^{\frac{p-2}{p}}\left(\int_{\Omega} |\nabla f|^p \right)^{\frac{2}{p}}.
			\end{equation*}
			From \eqref{eq:1.2}, we obtain 
			\begin{equation*}
				\left(\frac{4\lambda_1}{p^2}\right)^{\frac{p}{2}}\int_{\Omega} |u|^p \leq \int_{\Omega} |\nabla f|^p.
			\end{equation*}
			The constant $C_p$ depends only on $p$ and $\lambda_1$. The general case can be proved similarly through multiplication by $(\textrm{sgn}\ u)|u|^{p-1}$ and integration. 
		\end{proof}
		
	\end{lemma}
	
	\begin{proof}[Proof of Theorem~\ref{thm:lower-bounded-c-metric}]
		
		From Lemma~\ref{Lem:Mckean}, $M$ has the positive spectrum.	It is a standard result that if the manifold has positive spectrum then there exists a positive symmetric Green's function $G$ on $M$. Moreover, we can always take $G(x,y)$ to be the minimal Green's function constructed using exhaustion of compact subdomains. Hence
		\begin{equation*}
			G(x,y)=\lim_{i \rightarrow \infty } G_i(x,y)>0,
		\end{equation*}
		where $G_i$ is the Dirichlet Green's function of a compact exhaustion $\left\{\Omega_i \right\}_i$ of $M$, and the limit is uniform on compact subsets of $M$. 
		
		Take any (bounded) holomorphic function $f : M \rightarrow \mathbb{D}$. For any relatively compact subdomain $\Omega \subset M$ with the smooth boundary $b\Omega$, we use $f^2$ in Lemma~\ref{Lem:Cheng} and solving the Dirichlet boundary problem with the inequality 
		\begin{equation}\label{eq:caratheodory}
			\left(g(\nabla f^2,\nabla f^2)(x)\right)^{\frac{p}{2}} = \left(4|f(x)|^2 df(\nabla f)(x)\right)^{\frac{p}{2}}	\leq 2^p |f|^p(x) \gamma_M(x;\nabla f(x))^{\frac{p}{2}}
		\end{equation}
		for any $x\in M$,  and the condition $p\geq 2$ implies
		\begin{equation}\label{eq:estimate1}
			\int_{\Omega} |u|^p \leq \left(\frac{2p}{(2n-1)a} \right)^{p} \int_{\Omega} |f|^p \gamma_M(.;\nabla f)^{\frac{p}{2}} \leq \left(\frac{2p}{(2n-1)a} \right)^{p} \int_{M} |f|^p \gamma_M(.;\nabla f)^{\frac{p}{2}},
		\end{equation}
		where $u$ is the solution of 
		\begin{align}\label{eq:bounded-geometry}
			\begin{cases} \triangle u = 2|\nabla f|^2  & \text{on  } \Omega, \\
				u = 0 & \text{on  } b\Omega,  \end{cases}
		\end{align}
		and $a>0$ is for the upper bound of the Riemannian sectional curvature $\leq -a^2<0$. 
		
		From the hypothesis $|f|^p \gamma_M(.;\nabla f)^{\frac{p}{2}}\in L^1(M)$ and from the exhaustion of compact subdomains, there exists $u \in C^{\infty}(M,\mathbb{R})$ such that 
		\begin{equation*}
			\int_{M} |u|^p < \infty,
		\end{equation*}
		and $\triangle u = 2|\nabla f|^2$ on $M$. Furthermore, the fact $\inf_{x\in M} \vol \, B(x,r)>0$ for any $r>0$  implies that $u(x)\rightarrow 0$ as $d(p,x)\rightarrow \infty$ from some fixed point $p\in M$. Thus the Dirichlet problem is solvable and $u$ can be represented by 
		\begin{equation}\label{eq:sol_of_DP}
			u(x)=2\int_{M} G(x,y)|\nabla f|^2(y)dy,
		\end{equation}
		which proves part (1). 
		
		For part (2), the positive minimal Green's function satisfies
		\begin{equation*}
			G(x,y)=\int_{0}^{\infty}h_M(x,y,t)dt,
		\end{equation*}
		where we denote the heat kernel of the Laplace--Beltrami operator by $h_M(x,y,t)$. Hence ~\eqref{eq:sol_of_DP} becomes
		\begin{equation}\label{eq:ing0}
			u(x)=2\int_{0}^{\infty}\int_{M} h_M(x,y,t)|\nabla f|^2(y)dydt.
		\end{equation}
		
		We use the Cheeger and Yau's heat kernel comparison theorem \cite{CheegerJeffYauShingTung81}:
		\begin{equation}\label{eq:ing1}
			h_M(x,y,t)\geq h_{M_k}(d(x,y)),
		\end{equation}
		where $M_k$ is the space form with constant sectional curvature equal to $k$. From the two-sided estimate of Davies and Mandouvalos \cite{DaviesEBMandouvalosN1988},
		\begin{equation}\label{eq:ing2}
			c(n)^{-1}h(t,d(x,y))\leq h_{M_k}(d(x,y)) \leq c(n)h(t,d(x,y)),
		\end{equation}
		where $c(n)$ depends only on $n$ and 
		\begin{align}\label{eq:ing3}
			h(t,r)&=(2\pi t)^{-n}\exp \left [-\frac{r^2}{2t}-\frac{(2n-1)^2b^2t}{8}-\frac{(2n-1){b}r}{2} \right ] (1+{b}r)\left(1+{b}r+\frac{b^2t}{2} \right)^{\frac{2n-1}{2}-1}
		\end{align} for $t,r>0$,
		where $b>0$ is for the lower bound of the Riemannian sectional curvature $\geq -b^2$. 
		
		Now combining ~\eqref{eq:estimate1} with ~\eqref{eq:ing0},~\eqref{eq:ing1},~\eqref{eq:ing2}, and ~\eqref{eq:ing3} gives the desired inequality ~\eqref{eq:gradient-estimate}. This completes the proof.
	\end{proof}
	
	We end this paper with an example for Theorem ~\ref{thm:lower-bounded-c-metric}.
	\begin{prop}\label{prop:disk-case}
		In the case of unit disk $\mathbb{D}$ in $\mathbb{C}$, for each $p\geq 2$, we have  
		\begin{equation*}
			2\pi \int_{0}^{1}\left(\frac{1}{6}-\frac{R^2}{2}\ln R -\frac{R^4}{8}(4\ln R -1)-\frac{R^6}{36}(6\ln R-1)   \right)^{p} R\, dR\leq p^p \int_{\mathbb{D} }|z|^p \gamma_{\mathbb{D}}(z;\nabla z)^{\frac{p}{2}}.
		\end{equation*}
	\end{prop}
	
	\begin{proof}
		The Green function of  the unit disk $\mathbb{D}$ in $\mathbb{C}$ has the following form:
		\begin{equation*}
			G(x,y)=\frac{1}{2\pi}\ln \frac{|x-y|}{|x||y-\frac{x}{|x|^2}|}. 
		\end{equation*}
		The function $G$ satisfies $\triangle_x G(x,y)=\delta_y$ at fixed $y\in \mathbb{D}$ and $G(x,y)=0$ when $|x|=1$ and $|y|<1$. Since the gradient vector of $z \in \mathbb D$ with respect to the Poincar\'e metric is $(1-|z|^2)\frac{\partial}{\partial z}$, the integrand of the left-hand side of ~\eqref{eq:gradient-estimate2} is 
		\begin{equation}\label{eq:integrand}
			\int_{|y|< 1} G(x,y) (1-|y|^2)^2  dy. 
		\end{equation}
		Rewrite $G(x,y)=\frac{1}{4\pi}\ln \left(\frac{|x|^2|y-x/|x|^2|^2 }{|x-y|^2} \right)$ and choose  coordinates $x=(R,0)$ and $y=(r\cos \theta, r\sin \theta)$, then ~\eqref{eq:integrand} becomes 
		\begin{align*}
			&\frac{1}{4\pi}\int_{0}^{1}\int_{0}^{2\pi}\ln\left(\frac{1+r^2R^2-2rR\cos \theta}{R^2+r^2-2rR\cos \theta} \right)r(1-r^2)^2 d\theta dr\\
			&=\frac{1}{4\pi}\int_{0}^{1}r(1-r^2)^2 \left( I(1,rR)-I(r,R) \right)dr,
		\end{align*}
		where $I(a,b):=\int_{0}^{2\pi}\ln(a^2+b^2-2ab\cos \theta)d\theta$. It is well-known that 
		\begin{equation*}
			I(a,b)=4\pi \max \left\{\ln |a|, \ln |b| \right\}. 
		\end{equation*}
		Since $0\leq r,R\leq 1$, we have $I(1,rR)=0$. Thus the integral becomes
		\begin{align*}
			&-\int_{0}^{1}r(1-r^2)^2 \max \left\{\ln |r|, \ln |R| \right\} dr =  -\ln R \int_{0}^{R}r(1-r^2)^2dr-\int_{R}^{1}r(1-r^2)^2 \ln r dr\\
			&=\frac{1}{6}-\frac{R^2}{2}\ln R -\frac{R^4}{8}(4\ln R -1)-\frac{R^6}{36}(6\ln R-1). 
		\end{align*}
		Thus the left-hand side of ~\eqref{eq:gradient-estimate2} is 
		\begin{equation*}
			2\pi \int_{0}^{1}\left(\frac{1}{6}-\frac{R^2}{2}\ln R -\frac{R^4}{8}(4\ln R -1)-\frac{R^6}{36}(6\ln R-1)   \right)^{p} R\, dR .
		\end{equation*}
	\end{proof}	
	
	\subsection*{Conflicts of interest}
	The corresponding author states that there is no conflict of interest. 
	
	\bibliographystyle{spmpsci}
	\bibliography{reference}

	
	
\end{document}